\documentclass[11pt, letterpaper]{amsart}
\usepackage[mathscr]{eucal}
\usepackage{palatino, mathpazo, amsfonts, mathrsfs, amscd}
\usepackage{url}
\usepackage{amssymb, amsmath, amsthm}
\usepackage{soul}
\usepackage{stackrel}
\usepackage{bm}
\usepackage{stmaryrd}
\usepackage{tikz-cd}
\usepackage{tikz}
\usetikzlibrary{arrows}
\usepackage{enumitem}

\newtheorem{theorem}{Theorem}[section]
\newtheorem{lemma}[theorem]{Lemma}

\newtheorem{proposition}[theorem]{Proposition}
\newtheorem{corollary}[theorem]{Corollary}

\theoremstyle{remark}
\newtheorem{remark}[theorem]{Remark}
\newtheorem{example}[theorem]{Example}
\newtheorem{definition}[theorem]{Definition}
\newtheorem{assumption}[theorem]{Assumption}

\numberwithin{equation}{section}
\usepackage{todonotes}

\newcommand{\ol}{\overline}

\newcommand{\wt}{\widetilde}

\newcommand{\cc}[1]{\mathcal{#1}}  

\newcommand{\CC}{\mathbb{C}}
\newcommand{\ZZ}{\mathbb{Z}}

\newcommand{\QQ}{\mathbb{Q}}

\newcommand{\RR}{\mathbb{R}}

\newcommand{\h}{h}

\DeclareMathOperator{\ch}{ch}

\DeclareMathOperator{\Hom}{Hom}

\newcommand{\br}[1]{\left\langle#1\right\rangle}  

\newcommand{\op}[1]{\operatorname{#1}}

\usepackage{hyperref}

\title{A topological Chern character for matrix factorizations}

\author[Shoemaker]{Mark Shoemaker}
\address{
 \begin{tabular}{l}
	   Mark Shoemaker \\
  \hspace{.1in} Colorado State University \\
     \hspace{.1in} Department of Mathematics \\
  \hspace{.1in} 1874 Campus Delivery  \\
  \hspace{.1in} Fort Collins, CO, USA, 80523-1874\\
  \hspace{.1in} Email: {\bf mark.shoemaker@colostate.edu} \\
 \end{tabular}
}

\begin{document}

\begin{abstract}
   For $Y$ a quasi-projective complex variety and $w \colon Y \to \CC$ a regular function, we construct a Chern character from the Grothendieck group of the category of matrix factorizations of $w$ to the critical cohomology of $w$, and show that it factors through a certain topological $K$-theory group.  We prove a Grothendieck--Riemann--Roch theorem with respect to this Chern character, and verify several functorial properties.
\end{abstract}

\maketitle
\tableofcontents

For $X$ a smooth projective complex variety, the Chern character 
$$\ch\colon K^0_{\op{alg}}(X) \to H^*(X; \QQ)$$
provides a ring homomorphism from the Grothendieck group of locally free sheaves to singular cohomology.
For  $f \colon X \to X'$ a morphism of smooth projective varieties,  the Grothendieck--Riemann--Roch theorem relates the pushforward in algebraic $K$-theory to the pushforward in cohomology, giving the following commutative diagram:
 \[  \begin{tikzcd}
     K^0_{\op{alg}}(X) \ar[d, "f_*"] \ar[rr, "Td_X \cup \ch(-)"]&& H^*(X; \QQ)
     \ar[d, "f_*"] \\
            K^0_{\op{alg}}(X') \ar[rr, "Td_{X '} \cup \ch(-)"] && H^*(X' ; \QQ).
        \end{tikzcd}
        \] 

When $X$ is no longer smooth, there is not, in general, a pushforward map in either $K$-theory or cohomology.  A generalization of the Grothendieck--Riemann--Roch theorem was formulated and proven by Baum--Fulton--MacPherson in \cite{BFM},  where for $X$ a quasi-projective variety, a map
\begin{equation}\label{e:tau}
    \tau\colon K_0^{\op{alg}}(X) \to H_*(X; \QQ),
\end{equation}
is constructed from the Grothendieck group of coherent sheaves on $X$ to Borel--Moore homology.  They prove that, for $f \colon X \to X'$ a morphism of  quasi-projective varieties, 
the following diagram commutes:
 \[  \begin{tikzcd}
     K_0^{\op{alg}}(X) \ar[d, "f_*"] \ar[rr, "\tau"]&& H_*(X; \QQ)
     \ar[d, "f_*"] \\
            K_0^{\op{alg}}(X') \ar[rr, "\tau"] && H_*(X' ; \QQ).
        \end{tikzcd}
        \] 

There is a natural homomorphism
$\Phi\colon K^0_{\op{alg}}(X) \to K_0^{\op{alg}}(X).$
When $X$ is smooth, $\Phi$ is an isomorphism.
For a singular variety $X$, the quotient 
$$K_0^{\op{alg}}(X)/\op{im}(\Phi)$$
 measures the extent to which coherent sheaves on $X$ fail to be resolved by complexes of vector bundles.
 
  In this paper we construct, for a class of singular varieties $X$, a Chern character from $K_0^{\op{alg}}(X)/\op{im}(\Phi)$ to a suitable cohomology group, and prove a Grothendieck--Riemann--Roch type theorem.

\subsection{Laundau--Ginzburg models and matrix factorizations}

Let $Y$ be a smooth complex quasi-projective variety
with $w \colon Y \to \CC$ a regular function not equal to zero.  The pair $(Y, w)$ is called a \emph{Landau--Ginzburg} (LG) model.  Landau--Ginzburg models arise in many contexts, for instance as the mirrors to Fano manifolds \cite{G2, HV}, and as examples of noncommutative crepant resolutions \cite{ Sha, Asp}.

Associated to $(Y, w)$ is the derived category of matrix factorizations, $\op{MF}(Y, w)$, whose objects consist of a pair of vector bundles $E_0, E_1$, together with maps $\phi_0\colon E_0 \to E_{1}$ and $\phi_{1}: E_{1} \to E_0$ such that 
\begin{equation}
\phi_0 \circ \phi_{1} = w\cdot id_{E_0}, \hspace{1 cm}
\phi_{1}  \circ \phi_0 = w\cdot id_{E_{1}}.
\end{equation}
It was proposed by Kontsevich that the category of matrix factorizations is the correct analogue of the derived category for spaces endowed with a superpotential.
In particular, in homological mirror symmetry, $\op{MF}(Y,w)$ is the category of $D$-branes for the $B$-model of the Landau--Ginzburg model $(Y, w)$ \cite{KaLi}.

 For a quasi-projective variety $X$, the singularity category $\op{D}_{\op{Sg}}(X)$ is defined to be the Verdier quotient of $\op{D}^b(X)$ by the subcategory $\op{Perf}(X)$ of perfect complexes.   
For $(Y, w)$ a Landau--Ginzburg model, let $X=w^{-1}(0)$.  A celebrated theorem of Orlov gives an equivalence of categories 
$$\op{MF}(Y, w) \xrightarrow{\sim} \op{D}_{\op{Sg}}(X),$$
which on objects is defined by $(E_0, E_1, \phi_0, \phi_1) \mapsto \op{Coker}\phi_1.$  This implies in particular that the Grothendieck group of matrix factorizations may be identified with the quotient:
$$K_0(\op{MF}(Y, w)) = K_0^{\op{alg}}(X)/\op{im}(\Phi).$$

For $U$ a Euclidean neighborhood of $X$, let $U_{<0} = U \cap \{Re(w)^{-1}(\infty, 0)\}.$
Define the critical cohomology of $(Y, w)$ to be the direct limit
    \begin{equation}\label{e:critcohintro}
        \cc H(Y, w) := \varinjlim_U H^*(U, U_{<0})  \end{equation}
over all neighborhoods $U $ of $X$.  Alternatively, this group is equal to the cohomology of the sheaf of vanishing cycles $\phi_w(\ZZ)$.

\subsection{Results}
The main construction of this paper is a homomorphism \begin{equation}\label{e:tau1}
    \tau_{(Y, w)} \colon K_0(\op{MF}(Y, w)) \to \cc H(Y, w).
\end{equation}
With this we prove a Grothendieck--Riemann--Roch theorem for Landau--Ginzburg models.
  Suppose $f \colon Y \to Y'$ is a map of smooth quasi-projective varieties and let $w \colon Y \to \CC$ and $w' \colon Y' \to \CC$ be nonzero regular functions.  We say $f$ is a morphism of Landau--Ginzburg models if $w = w' \circ f.$  We denote it by $f \colon (Y, w) \to (Y', w').$  We prove:
\begin{theorem}[Corollary~\ref{c:GRR}]
    If $f \colon (Y, w) \to (Y', w')$ is a proper morphism of Landau--Ginzburg models, 
    then there is a commutative diagram
     \[  \begin{tikzcd}
     K_0(\op{MF}(Y, w)) \ar[d, "f_*"] \ar[r, "\tau_{(Y, w)}"]& \cc H(Y, w; \QQ) 
     \ar[d, "f_*"] \\
            K_0(\op{MF}(Y', w'))  \ar[r, "\tau_{(Y ', w ')}"] & \cc H (Y', w'; \QQ).
        \end{tikzcd}
        \] 
\end{theorem}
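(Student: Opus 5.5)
We have to guess the construction of $\tau_{(Y,w)}$. A natural choice is to build it from the Baum--Fulton--MacPherson transformation $\tau$ of \eqref{e:tau}, applied to $X = w^{-1}(0)$. The plan rests on identifying $K_0(\op{MF}(Y,w)) = K_0^{\op{alg}}(X)/\op{im}(\Phi)$ via Orlov's theorem. A class $[\op{Coker}\phi_1]$ then gives $\tau(\op{Coker}\phi_1) \in H_*(X;\QQ)$. By Alexander duality in the smooth ambient $Y$, this is a class in $H^*(Y, Y\setminus X;\QQ)$, twisted by $Td_Y$.

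Critical cohomology receives a natural map from $H^*(Y, Y\setminus X)$. Indeed, for a neighborhood $U$ of $X$ we have $U_{<0} \subset U\setminus X$, so there is a restriction $H^*(U, U\setminus X) \to H^*(U,U_{<0})$, and excision identifies $H^*(U,U\setminus X)$ with $H^*(Y,Y\setminus X)$. The key point is that classes coming from $\op{im}(\Phi)$ map to zero. For a perfect complex, localized BFM classes are given by the localized Chern character of a complex of bundles on $Y$ supported on $X$. Such a complex admits a deformation moving its support off into $U_{<0}$: multiplication by $w$ gives a homotopy making the complex exact away from $X$, and the class lifts to $H^*(U,U_{<0})$ compatibly. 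I would therefore take $\tau_{(Y,w)}$ to be Td-twisted BFM followed by this map, assuming this is (equivalent to) the paper's construction, and prove the theorem in that form.

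For the commutativity itself, proceed as follows.

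First, check that $f_*$ on $K_0(\op{MF})$ matches, under Orlov's equivalence, the derived pushforward $(f|_X)_*\colon K_0^{\op{alg}}(X)\to K_0^{\op{alg}}(X')$. This uses $X = f^{-1}(X')$, which holds since $w=w'\circ f$. Properness is needed here, and one must check that this pushforward preserves perfect complexes modulo the relevant quotient. Pushforward of a matrix factorization is computed by $Rf_*$ of its components, which yields a matrix factorization of $w'$ up to equivalence.

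Second, apply BFM functoriality: $\tau\circ (f|_X)_* = (f|_X)_*\circ\tau$ in Borel--Moore homology.

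Third, check that the Alexander duality identification $H_*(X)\cong H^*(Y,Y\setminus X)$, together with the Todd twist, intertwines the Borel--Moore pushforward with the Gysin pushforward $f_*\colon H^*(Y,Y\setminus X)\to H^{*}(Y',Y'\setminus X')$. The Todd classes arrange this because the Todd twist accounts for $Td_{f}$, exactly as in classical GRR.

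Fourth, show that the Gysin pushforward is compatible with the critical cohomology pushforward. For $U'$ a neighborhood of $X'$, $f^{-1}(U')$ is a neighborhood of $X$ and $f^{-1}(U'_{<0}) = f^{-1}(U')_{<0}$. The pushforward on $\cc H$ is then the proper Gysin map of pairs $H^*(f^{-1}U', f^{-1}U'_{<0})\to H^*(U',U'_{<0})$, which is natural with respect to the restriction from $(\cdot,\cdot\setminus X)$ pairs.

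The main obstacle is the well-definedness and naturality at the level of pairs. One issue is showing that the map kills $\op{im}(\Phi)$ in a way compatible with pushforward. The other is checking that the Gysin maps on relative cohomology of pairs commute with the change-of-pair restriction maps; since $f$ is not a submersion, they are defined via Poincaré--Lefschetz duality on noncompact manifolds. I would handle the latter by expressing all pushforwards through Borel--Moore homology of the pairs and using the naturality of cap product with fundamental classes. For the former, I would reduce to the case of graph embedding followed by projection, $Y\hookrightarrow Y\times Y'\to Y'$, treating closed embeddings and smooth projections separately.
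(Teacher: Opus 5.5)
Your argument is correct and is essentially the paper's. The paper first proves the integral $K$-theoretic square (Theorem~\ref{t:GRR}\eqref{i:push}) from the Baum--Fulton--MacPherson $K$-theoretic Riemann--Roch for $f|_X\colon X\to X'$ plus the compatibility \eqref{e:PushZCrit} of $i_{<0}^*h_{\op{top}}^{-1}$ with proper pushforward, and then applies $\ch$, whereas you run the same two ingredients directly in rational (co)homology via covariance of $\tau$ and the cohomological analogue of \eqref{e:PushZCrit}, which is legitimate because $\tau_{(Y,w)}$ is induced by $i_{<0}^*\circ\tau$ (Section~\ref{ss:cc}).

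Two small corrections, neither of which affects the argument. Your third step needs only Poincar\'e duality, with no Todd factor beyond the $\op{Td}_Y$ already built into $\tau$; an extra one would in general break the square, as $\op{Td}_Y\neq f^*\op{Td}_{Y'}$. Also, the vanishing on $\op{im}(\Phi)$ is not an obstacle here: it is Theorem~\ref{t:MFch}, proved by pulling $\cc O_Y\xrightarrow{\cdot w}\cc O_Y$ back from $(\CC,\{Re(z)<0\})$, whose relative $K$-group vanishes (your picture of moving supports \emph{into} $U_{<0}$ has the direction backwards), and commutativity then descends from $K_0^{\op{alg}}(X)$ because $K_0^{\op{alg}}(X)\to K_0(\op{MF}(Y,w))$ is surjective.
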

We further verify the compatibility of the map $\tau_{(Y, w)}$ with several other operations including pullback, shift, and tensoring with vector bundles. 
See Theorem~\ref{t:GRR} and Remark~\ref{r:cohom} for details.

We also consider the case of a map $f \colon S \to X \subset Y$ from a (possibly singular) variety $S$ to the zero locus of $w$.  In this case the Grothendieck--Riemann--Roch theorem takes the form of a commuting diagram
\begin{equation*}
\begin{tikzcd}
 K_0^{\op{alg}}(S) \ar[d, "f_*"] \ar[r, "\tau"]& H_*(S; \QQ) 
     \ar[d, "f_*"] \\
            K_0(\op{MF}(Y, w))  \ar[r, "\tau_{(Y, w)} "] & \cc H(Y, w; \QQ),
\end{tikzcd}
\end{equation*}
where $\tau$ is the homomorphism from \eqref{e:tau}.

An important property of the map $\tau$  is its compatibility with the Cartesian product of varieties under the K\"unneth map. 
The analogue of the Cartesian product for LG models is the \emph{Sebastiani--Thom sum}.   Let  $(Y_1, w_1)$ and $(Y_2, w_2)$  be two LG models. 
Let $Y = Y_1 \times Y_2$ and define the Sebastiani--Thom sum to be $$w = w_1 \boxplus w_2\colon Y \to \CC$$ where $w_1 \boxplus w_2(y_1, y_1) = w_1(y_1) + w_2(y_2).$  
There is an external product 
$$  K_0(\op{MF}(Y_1, w_1)) \otimes K_0(\op{MF}(Y_2, w_2)) \to K_0(\op{MF}(Y, w))$$
and similarly in critical cohomology.
In Theorem~\ref{t:ST} we prove that the transformation $\tau_{(Y, w)}$ also commutes with this product in an appropriate sense.

\subsection{$K$-theory}
It was proposed in  \cite{BFMKtheory} that the most natural framework for the Grothendieck--Riemann--Roch theorem is in terms of $K$-theory, as ``a transition from algebra to topology.''
They construct  a homomorphism 
$$\alpha_*\colon K_0^{\op{alg}}(X) \to K_0^{\op{top}}(X),$$
 where $K_0^{\op{top}}(X)$ is defined to be $K^0_{\op{top}}(\CC^n, \CC^n \setminus X)$ for a closed embedding $X \hookrightarrow \CC^n$.  The Riemann--Roch theorem in this context is the statement that $\alpha_*$ is covariant with respect to proper maps.  There is a Chern character map $\ch_*\colon K_0^{\op{top}}(X) \to H_*(X; \QQ),$
and the map $\tau$ of \eqref{e:tau} factors through topological $K$-theory as $\tau = \ch_* \alpha_*.$

The same turns out to be true in the context of Landau--Ginzburg models and, as in \cite{BFMKtheory}, working with $K$-theory clarifies many arguments.  In analogy with \eqref{e:critcohintro}, we define the critical (topological) $K$-theory to be the direct limit
    $$\cc K^0(Y, w) := \varinjlim_U K^0_{\op{top}}(U, U_{<0}),$$
      over all neighborhoods $U$ of $X$,
and construct a homomorphism 
$$\alpha_{(Y, w)}: K_0(\op{MF}(Y, w)) \to \cc K^0(Y, w).$$
    
    The localized Chern character of \cite{Ive, BFM} induces a map
    $$ \ch_{(Y,w)}: \cc K^0(Y, w) \to \cc H(Y, w; \QQ).$$
The map $\tau_{(Y, w)}$ is then defined to be $\tau_{(Y, w)} = Td_Y\cup \ch_{(Y, w)}(\alpha_{(Y, w)}(-)).$
In Theorem~\ref{t:GRR} we prove that $\alpha_{(Y, w)}$ is compatible with pushforward along proper maps of LG models, pullback along maps of LG models, the $K^0_{\op{alg}/\op{top}}(X)$-module structure, and shift.
The associated properties of the map $\tau_{(Y, w)}$ then follow immediately from the corresponding results for $\alpha_{(Y, w)}$.

Working in the $K$-theoretic setting also allows us to prove a compatibility between  Kn\"orrer periodicity and Bott periodicity, generalizing work of Brown \cite{Brown}, who proved a similar statement for affine hypersurface singularities.  See Corollary~\ref{c:KvB} for details.

\subsection{Relation to other works}
There has been substantial prior work constructing a Chern character for matrix factorizations in various categorical contexts.  In the equivariant affine case, a Chern character map to Hochschild homology $\op{HH}_*(\op{MF}(Y, w))$ was computed by Polishchuk--Vaintrob in \cite{PVChern} and a Hirzebruch--Riemann--Roch theorem was proved.  This was generalized to the global setting in Platt \cite{Platt}.
In \cite{Efimov}, Efimov computed the periodic cyclic homology of $\op{MF}(Y, w)$ and showed it was equal to the cohomology of the sheaf of vanishing cycles $\phi_w$.
Kim--Polishchuk constructed in \cite{KiPo} a  Chern character to $\op{HH}_*(\op{MF}(Y, w))$ by defining an analogue of the Atiyah class for matrix factorization.  This was generalized to the stacky setting and used to prove a Grothendieck--Riemann--Roch theorem by Choa--Kim--Sreedhar in \cite{CKS}.

At the $K$-theoretic level, 
in the case that $Y = \CC^n$,  Brown 
constructed a map from $K_0(\op{MF}(Y, w))$ to a version of critical topological $K$-theory in \cite{Brown}.  
In \cite{Blanc}, Blanc defined the topological $K$-theory of a general dg category over $\CC$, along with an associated topological Chern character.  
For the dg category of matrix factorizations of an LG model $(Y, w)$, this is presumably closely related to $\cc K^0(Y, w)$.  Indeed for homogeneous hypersurface singularities such a comparison is made by Brown and Dyckerhoff in \cite{BrDy}.

\subsection{Applications}

The original motivation for this paper arose from mirror symmetry.  For a large class of Landau--Ginzburg models, one can define an A-model curve counting theory known as the the gauged linear sigma model (GLSM) \cite{TX0, TX2, TX1, FJR2, CFFGKS, CKL, FKim, ShoeMirGLSM}.  In many formulations of this enumerative theory, the state space is given by the critical cohomology $\cc H(Y, w).$  In analogy with Gromov--Witten theory, one can then construct a Frobenius manifold and Dubrovin connection $\nabla^{\op{Dub}}_{(Y, w)}$ for $(Y, w)$ using generating functions of GLSM invariants as the structure constants.  
The map 
$\tau_{(Y, w)}$  may then be used to define a central charge 
$$Z^{(Y, w)}\colon K_0(\op{MF}(Y, w)) \to \CC$$
for $(Y, w)$, as well a distinguished integral lattice of  flat sections of $\nabla^{\op{Dub}}_{(Y, w)}.$

Following \cite{BMO, GGI, CIJ}, 
one expects parallel transport of flat sections of $\nabla^{\op{Dub}}_{(Y, w)}$ in the $A$-model K\"ahler moduli space to be closely related to derived equivalences of $\op{MF}(Y, w)$.  
The map 
$\tau_{(Y,w)}$ of \eqref{e:tau1} 
  provides the necessary link between $\op{MF}(Y, w)$ and $\nabla^{\op{Dub}}_{(Y, w)}$ to make such statements precise.
See \cite{ChIR, ShoeInt, ShZh} for examples of such results in particular cases.  

Using the results in this paper, we will define an $A$-model integral structure for $(Y, w)$ and investigate applications in future work.

\subsection{Statement on the use of AI}

Gemini and ChatGPT were  used to answer questions and search for references. 
All logical arguments, proofs, and writing are the author's own.

\subsection{Acknowledgements}
I would like to thank Nicolas Addington, Michael Brown, David Favero,  Tyler Kelly, Ben Knudsen,
Jeongseok Oh, and Alexander Polishchuk for valuable discussions and correspondences related to this work.  
This work was partially 
supported by the Simons Foundation Travel Grant 958189.

\section{Matrix Factorizations}

In this section we record several definitions and results about the derived category of matrix factorizations that will be used in later sections.
We mostly
follow the exposition of Orlov \cite{OrlNA}.  For the sake of simplicity, we do not work in the greatest possible generality.  For a more complete account of the theory see \cite{BFKKer, PVStacks}.

    \subsection{The derived category of matrix factorizations}
Let $Y$ be a quasi-projective complex variety and $w \colon Y \to \CC$ a regular function.  
\begin{definition} If $Y$ is smooth, we call the pair $(Y, w)$ a \emph{Landau--Ginzburg (LG) model}.
\end{definition}

\begin{definition} Given $w\colon Y \to \CC$,
a \emph{matrix factorization of $w$} is the data $\cc E =  (E_{0}, E_1, \phi_{0}, \phi_1)$ where $E_i$ are 
locally free sheaves of finite type on $Y$
and $\phi_0\colon E_0 \to E_{1}$, $\phi_{1}: E_{1} \to E_0$,  are maps satisfying:
\begin{align*}
\phi_0 \circ \phi_{1} &= w\cdot id_{E_0} \\
\phi_{1}  \circ \phi_0 &= w\cdot id_{E_{1}}.
\end{align*}

\end{definition}

There is a shift operator $[1]$ defined by 
\[\cc E[1] = (E_0, E_{-1} , -\phi_0, -\phi_1).\]
Matrix factorizations form a $\ZZ_2$-graded dg category, denoted $\op{Fact}(Y, w)$, where
morphisms 
of degree $0$ are given by 
\[\Hom_{\op{Fact}(Y, w)}^{0}(\cc E, \cc F) := \Hom_{\op{Qcoh}(Y)}(E_0, F_0 ) \oplus \Hom_{\op{Qcoh}(Y)}(E_1, F_1 ),\]
and morphisms of degree $1$ are given by 
\[\Hom_{\op{Fact}(Y, w)}^{1}(\cc E, \cc F) := \Hom_{\op{Qcoh}(Y)}(E_0, F_1 ) \oplus \Hom_{\op{Qcoh}(Y)}(E_1, F_0).\]
For a morphism $p$ of degree $k$, the differential $D$ given by
$$Dp = \phi^F \circ p - (-1)^k p \circ \phi^E.$$

Let $Z^0\op{Fact}(Y, w)$ denote the category whose objects are those of $\op{Fact}(Y, w)$ and whose morphisms are closed degree 0 morphisms of $\op{Fact}(Y, w)$.  Let $H^0\op{Fact}(Y, w)$ denote the category whose morphisms are equivalences classes of morphisms from $Z^0\op{Fact}(Y, w)$ modulo exact morphisms.

Given a morphism $p: \cc E \to \cc F \in Z^0\op{Fact}(Y, w),$ define the cone of $p$ to be the factorization 
$$\cc C(p) = \left(C(p)_0, C(p)_1, \phi^C_0, \phi^C_1\right).$$
where 
$$C(p)_0 = F_0 \oplus E_1 ,\;\;\; C(p)_1 = F_1 \oplus E_0,$$
with 
$$\phi^C_0 = \left( \begin{array}{cc}\phi^F_0 & p_1 \\
0 & -\phi^E_1
\end{array}\right),\;\;\; \phi^C_1 = \left( \begin{array}{cc}\phi_1^F & p_0 \\
0 & -\phi_0^E
\end{array}\right).$$
Define a \emph{standard triangle} in $H^0\op{Fact}(Y, w)$ to be a triangle of the form
$$ \cc E \xrightarrow{p} \cc F \xrightarrow{q} \cc C(p) \xrightarrow{r} \cc E[1]$$
where $q$ and $r$ are the obvious morphisms.  An \emph{exact triangle} is one which is isomorphic to a standard triangle.  This makes $H^0\op{Fact}(Y, w)$ into a triangulated category \cite[Proposition~2.3]{OrlNA}.

Given a complex of objects of $Z^0\op{Fact}(Y, w)$:
\[
\cc E^i \xrightarrow{d^i} \cc E^{i+1} \xrightarrow{d^{i+1}} \cdots \xrightarrow{d^{j-1}} \cc E^j,
\]
define the totalization of this complex to be the factorization $\cc T = (T_0, T_1, \phi^T_0, \phi^T_1)$ where 
$$T_0 = \bigoplus_{k+m \equiv 0 \mod 2} E_k^m ,\;\;\; T_1 = \bigoplus_{k+m \equiv 1 \mod 2} E_k^m,$$
with 
$$\phi^T_i|_{E_k^m} = d_k^m + (-1)^m \phi_k^{E^m}.$$  Let $\op{Acyc}(Y, w)$ be the thick subcategory of $H^0\op{Fact}(Y, w)$ generated by the totalizations of bounded exact complexes from $Z^0\op{Fact}(Y, w)$.  

\begin{definition}
    The \emph{derived category of matrix factorizations} of $(Y, w)$ is the triangulated category obtained as the Verdier quotient 
    $$\op{MF}(Y, w) := H^0\op{Fact}(Y, w)/ \op{Acyc}(Y, w).$$
\end{definition}

    While the derived category $\op{MF}(Y, w)$ is naturally $\ZZ_2$-graded, a slight generalization yields a $\ZZ$-graded version.  Suppose $\CC^*$ acts on $Y$ (possibly trivially) and  $w \colon Y \to \CC$ is a regular function, homogeneous of degree $d$ with respect to this action.  The weight $d$ character $\eta\colon \CC^* \to \CC^*$ defines a line bundle $\cc O_{[Y/\CC^*]}(\eta)$ on the stack $[Y/\CC^*]$, and the function
    $w$ may be viewed as a section of $\cc O_{[Y/\CC^*]}(\eta)$.  A ($\CC^*$-equivariant) factorization of $w$ is defined as before, except that  $E_0, E_1$ are now locally free sheaves of finite type on $[Y/\CC^*]$,
    $\phi_0\colon E_0 \to \cc O_{[Y/\CC^*]}(\eta) \otimes_{\cc O_{[Y/\CC^*]}} E_1  $, $\phi_1\colon E_1 \to E_0$, and 
    \begin{align*}
\phi_0 \circ \phi_{1} &= w\otimes id_{E_0} \\
\text{Id}_{\cc O_{[Y/\CC^*]}(\eta)} \otimes \phi_{1}   \circ \phi_0 &= w\otimes id_{E_{1}}.
\end{align*}
The rest of the construction follows similarly (see \cite{BFKKer} for details).
Let $\op{MF}_{\CC^*}(Y, w)$ denote the associated derived category.  
\begin{remark}
    If $\CC^*$ acts trivially on $Y$ and $w$ is the zero function, viewed as homogeneous of degree zero, there is an equivalence 
$$\op{D}^b(Y) \to \op{MF}_{\CC^*}(Y, 0)$$
that sends a vector bundle $F$ to $(F, 0, 0, 0)$ \cite[Corollary 2.3.12]{BFKvgit}.

\end{remark}

\begin{example}\label{e:geomphase}

 Let $P$ be a smooth quasi-projective variety.  Let $Y$ denote the total space of a rank $r$ vector bundle $V$ on $P$.  Let $s$ denote a regular section of $V^\vee$ and let $Z$ be the zero locus of $s$.  
     The section $s$ induces a function $w\colon Y \to \CC$.  Let $\CC^*$ act on $Y$ by scaling the fibers, and let $\eta$ denote the trivial line bundle of weight one with respect to the $\CC^*$ action.  Then $w$ can be viewed as a section of $\cc O_{[Y/\CC^*]}(\eta)$.  The $\CC^*$-equivariant LG model 
     $(Y, w)$
     is known as a \emph{geometric phase}. It may be viewed as representing the subvariety $Z$, as evidenced by the theorem below.

     Consider the diagram
\begin{equation}\label{e:csquare}\begin{tikzcd}
Y|_Z \ar[r, "j' "] \ar[d, "\pi|_Z"] & Y \ar[d, "\pi"]\\
Z  \ar[r, "j"]  & P
  \end{tikzcd}
  \end{equation}
   In work of Isik, Shipman, and Hirano, the following is proved.  \begin{theorem}[\cite{Isik, Shipman, Hirano}]\label{t:ish}
  There is an equivalence of categories given by
  \begin{equation} j'_* \circ \pi|_Z^*\colon \op{D}^b(Z) \to \op{MF}_{\CC^*}(Y, w).\end{equation}
  \end{theorem}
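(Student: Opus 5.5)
The plan is to reduce the statement to two known inputs: Orlov's graded version of the singularity-category correspondence, and Knörrer-type periodicity for the Koszul factorization. First I would check that the functor lands in the right place and is well defined. Since $Y|_Z \subset w^{-1}(0)$, where $w(p,v)=\langle s(p),v\rangle$ vanishes whenever $s(p)=0$, the sheaf $j'_*\pi|_Z^*F$ is a coherent sheaf on the zero fiber. It therefore defines an object of the graded singularity category, which is identified with $\op{MF}_{\CC^*}(Y,w)$ by the equivariant form of Orlov's theorem. Concretely, I would represent $j'_*\cc O_{Y|_Z}$ by a Koszul-type factorization. Regularity of $s$ makes the Koszul complex $K(s)=(\Lambda^\bullet V, \iota_s)$ on $P$, pulled back to $Y$, a resolution of $\cc O_{Y|_Z}$. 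Adding the tautological section $\tau$ of $\pi^*V^\vee$ (wedge with the tautological vector) gives the differential $\iota_s+\tau\wedge$ on $\Lambda^\bullet\pi^*V$, whose square is $w$ with the correct $\CC^*$-weights. This is the explicit matrix factorization attached to $\cc O_{Z}$.

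Full faithfulness is the step I expect to be the main obstacle. Both categories are compactly generated in their big versions, or I can argue with a relatively ample generating set, so it suffices to compare $\Hom$ between objects of the form $\pi|_Z^*(L|_Z)$ for $L$ ranging over powers of an ample line bundle on $P$. On the matrix factorization side, the Hom complex is computed by $\pi_*$ of the Koszul-type factorization. Taking the $\CC^*$-invariant part, with weights indexing the $\op{Sym}^k V^\vee$-pieces of $\pi_*\cc O_Y$, the invariant subcomplex collapses to $R\Hom_P(L, L'\otimes K(s))$. This is exactly $R\Hom_Z(L|_Z, L'|_Z)$ because $s$ is regular. The delicate point is the weight bookkeeping: the grading must kill every summand except the single degree that reproduces the $\ZZ$-graded Koszul complex. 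This is where the choice of weight one on fibers and $\eta$ of weight one is used.

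Finally, for essential surjectivity I would show that the image generates. The objects $j'_*\pi|_Z^*(L|_Z)$, with $L=\cc O_P(n)$, together with their twists by $\cc O(\eta)^{\otimes k}$, which agree with shifts $[2k]$ up to the correspondence, generate $\op{MF}_{\CC^*}(Y,w)$. I would argue this via Orlov's identification: any graded singular object is supported on $\op{Crit}(w)\cap w^{-1}(0)\subset Y|_Z$, where the inclusion uses regularity of $s$. By the support argument of Isik, Shipman and Hirano, the factorizations supported on $Y|_Z$ are generated by pushforwards from $Y|_Z$. Because the $\CC^*$-action contracts the fibers, these are in turn generated by the objects pulled back from $Z$. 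Since the functor is fully faithful with generating image, and its source $\op{D}^b(Z)$ is idempotent complete, it is an equivalence.
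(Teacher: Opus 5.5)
The paper gives no proof of this statement; it quotes it from Isik, Shipman and Hirano. Measured against those arguments, your full-faithfulness step has a genuine gap.

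Since $s$ is only assumed regular, $Z$ is a possibly singular local complete intersection. In that case the objects $L|_Z$ ($L$ a power of an ample bundle) classically generate only $\op{Perf}(Z)$, not $\op{D}^b(Z)$. Comparing $\Hom$'s between them therefore shows at most that $\Phi=j'_*\circ\pi|_Z^*$ is fully faithful on $\op{Perf}(Z)$. Already for $P=\mathbb{A}^1$, $V=\cc O$, $s=x^2$, the residue field $k$ of $Z=\operatorname{Spec}\CC[x]/(x^2)$ is not in the thick closure of $\cc O_Z$, and nothing in your computation controls $\Hom(\Phi(k),\Phi(k)[n])$.

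Passing to big categories does not help. The compactly generated category whose compact objects are $\op{D}^b(Z)$ is its Ind-completion, which is what matches the coderived category of quasi-coherent factorizations. That category is not generated by line bundles when $Z$ is singular. In $D(\mathrm{QCoh}\,Z)$, where line bundles do generate, non-perfect coherent sheaves are not compact, and the coproduct-preserving extension of $\Phi|_{\op{Perf}(Z)}$ does not agree with $\Phi$ on them.

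The same issue makes the first sentence of your essential-surjectivity paragraph false as stated. Granting the theorem, the thick closure of the $\Phi(L|_Z)$ and their $\eta$-twists is $\Phi(\op{Perf}(Z))$, a proper subcategory of $\op{MF}_{\CC^*}(Y,w)$ when $Z$ is singular. Your reduction would be fine if $Z$ were smooth, since then $\op{Perf}(Z)=\op{D}^b(Z)$.

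What is missing is a full-faithfulness argument valid for every object of $\op{D}^b(Z)$, which the cited proofs supply. Isik does this through Koszul duality for all dg-modules over the Koszul algebra of $s$; the adjunction route works as follows.
\begin{itemize}
\item By adjunction, $\Hom(\Phi\mathcal{F},\Phi\mathcal{G}[n])\cong\Hom\bigl(j'^{*}j'_*\pi|_Z^*\mathcal{F},\,\pi|_Z^*\mathcal{G}[n]\bigr)$, computed among $\CC^*$-equivariant factorizations of $0$ on $Y|_Z$.
\item Your Koszul factorization shows that $j'^{*}j'_*$ amounts to tensoring with its restriction to $Y|_Z$. Since $s|_Z=0$, that restriction is $(\Lambda^\bullet\pi^*V,\tau\wedge)$, where $\tau$ is the tautological section of $\pi^*V$ (not of $\pi^*V^\vee$).
\item After pushing forward along $\pi|_Z$, with $(\pi|_Z)_*\cc O=\op{Sym}^\bullet V^\vee|_Z$, your weight bookkeeping reduces the right-hand side to $\Hom_Z(\mathcal{F},\mathcal{G}[n])$. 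This holds because the Koszul complex of the tautological section is exact in every nonzero weight.
\end{itemize}
This works for arbitrary $\mathcal{F},\mathcal{G}$ with no generation hypothesis. With it in place, your support argument ($\op{Crit}(w)\subset Y|_Z$, d\'evissage to sheaves on $Y|_Z$, contraction of the fibres) correctly shows that $\Phi(\op{D}^b(Z))$ generates, rather than the images of line bundles. Your idempotent-completeness argument then finishes the proof.
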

  
\end{example}

\subsection{Categories of singularities}

Let $X$ be a quasi-projective scheme.  Let $\op{D}^b(X)$ denote the bounded derived category of coherent sheaves on $X$.  Let $\op{Perf}(X)$ denote the subcategory of perfect complexes.  \begin{definition}\cite{Orlov}
    The \emph{category of singularities of $X$} is the triangulated category obtained as the Verdier quotient 
    $$\op{D}_{\op{Sg}}(X) = \op{D}^b(X)/\op{Perf}(X).$$
\end{definition}

Let $(Y, w)$ be an LG model with $w \neq 0$, and let $X = w^{-1}(0).$
Given a matrix factorization 
$\cc E = (E_0, E_1, \phi_0, \phi_1)$, let $E_\bullet$ denote the 2-term complex of vector bundles 
$$E_\bullet = E_1 \xrightarrow{\phi_1} E_0.$$
The cokernel $\op{Coker}\phi_1$ may be viewed as a sheaf on $X$.  A morphism $p\colon \cc E \to \cc F \in Z^0\op{Fact}(Y, w)$ induces a morphism 
$\op{Coker}\phi^E_1 \to \op{Coker}\phi^F_1$, and thus we obtain a functor
$$\op{Cok}\colon Z^0\op{Fact}(Y, w) \to \op{Coh}(X).$$  Orlov proved the following.
\begin{theorem}[\cite{Orlov, OrlNA}]\label{t:OrEq}
    The functor $\op{Cok}\colon Z^0\op{Fact}(Y, w) \to \op{Coh}(X)$ induces an equivalence of triangulated categories 
    $$\Sigma\colon \op{MF}(Y, w) \to \op{D}_{\op{Sg}}(X).$$
\end{theorem}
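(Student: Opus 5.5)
The plan is to follow Orlov's argument in \cite{OrlNA}, and to proceed in three stages.

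\textbf{Stage 1: the functor descends.} First I check that $\op{Cok}$ descends to a triangulated functor $H^0\op{Fact}(Y,w)\to \op{D}_{\op{Sg}}(X)$. For a closed degree zero morphism $p=(p_0,p_1)$ with $p = Dh$, the homotopy $h$ gives a factorization of the induced map $\op{Coker}\phi_1^E\to\op{Coker}\phi_1^F$ through a locally free sheaf on $X$. The relevant sheaf is $F_1|_X$ or $E_0|_X$. Since locally free sheaves on $X$ are perfect, such a map vanishes in $\op{D}_{\op{Sg}}(X)$.

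Next I check compatibility with shifts. The two-periodic exact sequence
\[0\to \op{Coker}\phi_0\to E_1|_X\to E_0|_X\to\op{Coker}\phi_1\to 0\]
holds on $X$; exactness uses $w\neq 0$, so $w$ is a nonzerodivisor and $\phi_1$ is injective. It shows that $\op{Cok}(\cc E[1])=\op{Coker}\phi_0$ is isomorphic to $\op{Cok}(\cc E)[1]$ in $\op{D}_{\op{Sg}}(X)$, because the middle terms are perfect. Similarly, the cone $\cc C(p)$ gives a short exact sequence of cokernels up to perfect terms, so standard triangles go to exact triangles.

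Finally, the totalization of a bounded exact complex of factorizations has cokernel with a finite resolution, up to perfect objects, by cokernels built from exact sequences of vector bundles. Hence it is zero in $\op{D}_{\op{Sg}}(X)$, so $\op{Acyc}(Y,w)$ is killed. By the universal property of the Verdier quotient we obtain $\Sigma$.

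\textbf{Stage 2: essential surjectivity.} Every object of $\op{D}_{\op{Sg}}(X)$ is isomorphic to a shift of a coherent sheaf $\cc G$, obtained by truncating a locally free resolution far enough to the left. Because $Y$ is smooth and quasi-projective, a sufficiently high syzygy $M$ of $\cc G$ is maximal Cohen--Macaulay on $X$. Its pushforward to $Y$ then has projective dimension one, by Auslander--Buchsbaum locally, so it has a resolution $0\to E_1\xrightarrow{\phi_1}E_0\to M\to 0$ by vector bundles of equal rank. Here global vector bundles are obtained from quasi-projectivity via an ample line bundle.

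Multiplication by $w$ annihilates $M$, which yields $\phi_0$ with $\phi_0\phi_1=w$. Then $\phi_1\phi_0=w$ also holds, because $\phi_1$ is injective. So $M=\op{Cok}(\cc E)$, and $\cc G$ is a shift of $M$ in $\op{D}_{\op{Sg}}(X)$.

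\textbf{Stage 3: full faithfulness.} This is the main obstacle. Morphisms in $\op{D}_{\op{Sg}}(X)$ between maximal Cohen--Macaulay modules are computed as a colimit of $\Hom$'s over syzygies, or as stable $\Hom$ modulo maps through locally free sheaves. These must be compared with $\Hom$ in the Verdier quotient $\op{MF}(Y,w)$, where the subtlety is that in the non-affine case $\op{Acyc}$ is genuinely nontrivial.

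I would first try Orlov's route. The idea is to show that every morphism in $\op{D}_{\op{Sg}}(X)$ can be represented by a roof whose middle term comes from a factorization, using the resolution of Stage 2 functorially up to acyclics. One also shows that morphisms of factorizations inducing zero in $\op{D}_{\op{Sg}}(X)$ factor through totalizations of exact complexes.

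Concretely, a map $\op{Cok}\cc E\to \cc G$ in the singularity category, represented as $\op{Cok}\cc E\leftarrow \cc G'\to\cc G$ with cone perfect, can be lifted to a complex of factorizations. The lift is built by resolving $\cc G'$ by Cohen--Macaulay approximations and comparing the resolutions. A perfect cone corresponds to an exact complex of factorizations, after noting that vector bundles on $X$ correspond to factorizations of the form $(V,V,w,1)$ up to acyclics. This lifting step is where I expect to need the most care.
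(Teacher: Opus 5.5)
The paper gives no proof of its own here; it imports the theorem from Orlov, so your proposal has to stand on its own as a reconstruction of his argument. Stages 1 and 2 are the standard parts and are essentially correct, with a few small slips:
\begin{itemize}
\item A null-homotopic map induces a map on cokernels that factors through $E_1|_X$ (via the map $\op{Coker}\phi^E_1\to E_1|_X$ induced by $\phi^E_0$), not through $F_1|_X$ or $E_0|_X$.
\item The four-term exact sequence is $0\to\op{Coker}\phi_1\to E_1|_X\to E_0|_X\to\op{Coker}\phi_1\to0$. The shift comparison comes from $0\to\op{Coker}\phi_0\to E_0|_X\to\op{Coker}\phi_1\to0$.
\item In Stage 2, factoring $w\cdot\op{id}_{E_0}$ through the injective $\phi_1$ gives $\phi_1\phi_0=w$ first, not $\phi_0\phi_1=w$.
\end{itemize}

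The genuine gap is Stage 3. It is the actual content of the theorem, and you leave it as a plan. Your roof-lifting strategy has two parts: the lift of a roof must be well defined in $\op{MF}(Y,w)$, and a map of factorizations that becomes zero in $\op{D}_{\op{Sg}}(X)$ must already be zero in $\op{MF}(Y,w)$. Both hinge on controlling factorizations whose cokernel is perfect on $X$, in particular on showing that such a factorization lies in $\op{Acyc}(Y,w)$. The justification you give does not do this.
\begin{itemize}
\item With $\op{Cok}(\cc E)=\op{Coker}\phi_1$, the factorization $(V,V,w,1)$ has cokernel $0$; you want $(V,V,1,w)$, whose cokernel is $V|_X$.
\item More seriously, a locally free sheaf $P$ on $X$ is in general not the restriction of a vector bundle on $Y$. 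The factorization with cokernel $P$ is then only locally contractible, and proving it vanishes in the Verdier quotient is exactly the non-affine difficulty you flagged.
\item Likewise, resolving a perfect complex on $X$ produces a complex of factorizations whose cokernels are locally free on $X$. That is not an exact complex of factorizations, so its totalization is not visibly in $\op{Acyc}(Y,w)$.
\end{itemize}

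What is missing is a functor in the opposite direction, together with a global fact about $\op{Perf}(X)$ that uses quasi-projectivity.

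\emph{The functor.} Send a coherent sheaf $F$ on $X$ to the coherent factorization $(F,0,0,0)$ and extend to $\op{D}^b(X)$ by totalization. Now identify $\op{MF}(Y,w)$ with its coherent counterpart; this is where smoothness of $Y$ enters, via finite locally free resolutions. The result is an exact functor $\Upsilon\colon\op{D}^b(X)\to\op{MF}(Y,w)$.

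\emph{It kills $\op{Perf}(X)$.} A locally free $P$ on $X$ sits in an exact sequence $0\to K\to V_d\to\cdots\to V_0\to P\to0$, with each $V_i$ a sum of sheaves $\cc O_Y(-n)|_X$ and $K$ locally free. For $d\geq\dim X$ its class lies in $\op{Ext}^{d+1}_X(P,K)=H^{d+1}(X,P^\vee\otimes K)=0$, so $P$ is a direct summand of the complex $V_\bullet$. Hence $\op{Perf}(X)$ is the thick closure of the restrictions $\cc O_Y(-n)|_X$. Each of these is sent by $\Upsilon$ to the contractible factorization $(\cc O_Y(-n),\cc O_Y(-n),1,w)$.

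\emph{Faithfulness and fullness.} Consider the short exact sequence of coherent factorizations $0\to(E_1,E_1,w,1)\to\cc E\to(\op{Coker}\phi_1,0,0,0)\to0$, whose kernel is contractible. It gives $\bar\Upsilon\circ\Sigma\cong\op{id}$ for the induced functor $\bar\Upsilon\colon\op{D}_{\op{Sg}}(X)\to\op{MF}(Y,w)$. This yields faithfulness, and in particular shows that factorizations with perfect cokernel vanish. Fullness then comes from comparing $\Sigma\circ\bar\Upsilon$ with the identity; on maximal Cohen--Macaulay sheaves this comparison is your Stage 2 construction.

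Without an ingredient of this kind, Stage 3 does not yet prove the theorem.
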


Denote by  
$$\Phi\colon K^0(X) \to K_0(X)$$
the  natural map from the Grothendieck group of locally free sheaves to the Grothendieck group of coherent sheaves.
\begin{corollary}\label{c:modvb}
    There are  isomorphisms of Grothendieck groups:
    $$K_0(\op{MF}(Y, w)) \xrightarrow{\cong} K_0(\op{D}_{\op{Sg}}(X)) \cong K_0(X)/\op{im}(\Phi).$$
\end{corollary}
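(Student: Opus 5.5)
The first isomorphism comes directly from Theorem~\ref{t:OrEq}: an exact equivalence of triangulated categories induces an isomorphism of Grothendieck groups. So the real content is the identification $K_0(\op{D}_{\op{Sg}}(X)) \cong K_0(X)/\op{im}(\Phi)$. I will use the standard fact that the Grothendieck group of a Verdier quotient $\cc T/\cc S$ by a thick subcategory is the cokernel of $K_0(\cc S)\to K_0(\cc T)$. One way to see this is to check that the quotient map $K_0(\cc T)\to K_0(\cc T/\cc S)$ is surjective and that its kernel is generated by the classes of objects of $\cc S$. Then apply this with $\cc T = \op{D}^b(X)$ and $\cc S = \op{Perf}(X)$.

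The steps, in order, are as follows.

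First, identify $K_0(\op{D}^b(X)) = K_0(X)$ via $[F^\bullet]\mapsto \sum (-1)^i[\mathcal H^i(F^\bullet)]$. This is standard: it uses the truncation triangles, and its inverse sends a sheaf to itself in degree $0$.

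Second, identify the image of $K_0(\op{Perf}(X))\to K_0(\op{D}^b(X))$ with $\op{im}(\Phi)$. Since $X$ is quasi-projective, it has an ample line bundle, so it has the resolution property for perfect complexes. Hence every perfect complex is quasi-isomorphic to a bounded complex of locally free sheaves of finite type. Its class is then the alternating sum of the classes of these vector bundles, which lies in $\op{im}(\Phi)$. Conversely, any vector bundle is perfect, so the image is exactly $\op{im}(\Phi)$.

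Third, prove the Verdier quotient fact. Objects of $\cc T/\cc S$ are objects of $\cc T$, so $K_0(\cc T)\to K_0(\cc T/\cc S)$ is surjective. Classes of objects of $\cc S$ map to zero. For the inverse, I define a map $K_0(\cc T/\cc S)\to K_0(\cc T)/\op{im}K_0(\cc S)$ by $[A]\mapsto[A]$. To see this is well defined, I check two things.

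(a) Isomorphic objects in the quotient have equal classes. Such an isomorphism is a roof $A\xleftarrow{s}A'\to B$ with the cone of $s$ in $\cc S$, and the map $A'\to B$ also becomes invertible. So each arrow in the roof has cone in $\cc S$, and therefore $[A]=[A']=[B]$ modulo $\cc S$. Here I use that a morphism becoming invertible in the quotient has cone in $\cc S$; this holds precisely because $\cc S$ is thick, and $\op{Perf}(X)$ is thick.

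(b) Exact triangles in the quotient give additive relations. Every exact triangle in the quotient is isomorphic to the image of an exact triangle in $\cc T$, so by (a) the additivity relation already holds in $K_0(\cc T)$.

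The main obstacle is step (a): handling isomorphisms in the quotient given by roofs, and making sure thickness is used correctly. Everything else is formal.
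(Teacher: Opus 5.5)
Your proposal is correct and follows essentially the same route as the paper. Both use Orlov's equivalence for the first isomorphism, then identify $K_0$ of the Verdier quotient by the thick subcategory $\op{Perf}(X)$ with the cokernel of $K_0(\op{Perf}(X)) \to K_0(\op{D}^b(X))$, and use quasi-projectivity of $X$ to see that every perfect complex is quasi-isomorphic to a bounded complex of vector bundles, so this image is $\op{im}(\Phi)$. The only difference is that the paper cites SGA5 (Proposition VIII.3.1) for the quotient formula and Orlov's Remark 1.7 for the resolution statement, whereas you prove the quotient formula yourself via roofs and thickness; that argument is correct.
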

\begin{proof}
The first isomorphism is immediate from Theorem~\ref{t:OrEq}.  For the second, we first observe that $$K_0(\op{D}_{\op{Sg}}(X)) = K_0(\op{D}^b(X))/ K_0(\op{Perf}(X)),$$ as shown in \cite[Proposition VIII.3.1.]{SGA5}   
(see also \cite[Proposition~3.5]{CTZ}).
By Orlov \cite[Remark~1.7]{Orlov} (using the fact that $X$ is quasi-projective), any perfect complex on $X$ is quasi-isomorphic to a complex of locally free sheaves of $\cc O_X$-modules. 
\end{proof}

We will use the equivalence $\Sigma$ freely in what follows to switch perspectives between matrix factorizations and objects of $\op{D}_{\op{Sg}}(X)$ when convenient.
 
\subsection{Functors}\label{ss:func}
\begin{definition}
Given $f \colon Y \to Y'$  a morphism of smooth quasi-projective varieties and regular functions $w \colon Y \to \CC$ and $w' \colon Y' \to \CC$, if $w = w' \circ f,$ we say $f$ is a \emph{morphism of Landau--Ginzburg models} and denote it by $$f\colon (Y, w) \to (Y', w').$$
\end{definition}
Given a morphism of LG models, there is a pullback functor
\[ f^* \colon \op{Fact}(Y', w') \to \op{Fact}(Y, w)\]
defined by pulling back all of the data of a factorization of $w'$.  This induces a pullback between derived categories $\op{MF}(Y', w') \to \op{MF}(Y, w),$ which we will also denote by $f^*$.

Suppose $S$ is a quasi-projective variety, and $f \colon S \to Y$ is a morphism such that $f(S) \subset X$.  Then there is a pullback
\[f^* \colon \op{MF}(Y, w) \to \op{MF}(S, 0).\]

  Let $f\colon (Y, w) \to (Y', w')$ be a map of LG models.  
Set  
  $X = w^{-1}(0)$ and $X' = {w'}^{-1}(0).$
The following is proven in \cite[Proposition~6.1 and Remark~6.2]{PVStacks}.
\begin{lemma}
    Assume that $f|_X$ is proper.  Then the derived pushforward
    $f_*\colon \op{D}^b(X) \to \op{D}^b(X')$
    sends perfect complexes to perfect complexes, and induces a pushforward 
    $$f^{\op{Sg}}_*\colon \op{D}_{\op{Sg}}(X) \to \op{D}_{\op{Sg}}(X')$$
    between singularity categories.
\end{lemma}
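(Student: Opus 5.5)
The plan is to reduce both claims to a single local statement: $f_*$ of a perfect complex on $X$ is again perfect on $X'$. The key tool is that perfectness is detected by finite Tor-amplitude, together with a base-change comparison between the smooth ambient varieties $Y, Y'$ and the hypersurfaces $X, X'$.

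\textbf{Step 1: well-definedness of $f_*$ on $\op{D}^b$.} Since $f|_X \colon X \to X'$ is proper, the derived pushforward of a bounded complex with coherent cohomology again has bounded coherent cohomology. So $f_*\colon \op{D}^b(X) \to \op{D}^b(X')$ is defined and exact.

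\textbf{Step 2: perfect complexes go to perfect complexes.} By Orlov's remark used in Corollary~\ref{c:modvb}, a perfect complex $P$ on $X$ is quasi-isomorphic to a bounded complex of locally free sheaves, so it suffices to treat a vector bundle $E$ on $X$. Write $i\colon X \hookrightarrow Y$ and $i'\colon X' \hookrightarrow Y'$. Since $w = w'\circ f$, we have $X = f^{-1}(X')$ scheme-theoretically, so there is a Cartesian square with $f|_X$ on the left and $f$ on the right.

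\textbf{Step 3: the Tor-amplitude argument (the crux).} A bounded coherent complex $G$ on the hypersurface $X'$ is perfect if and only if $Li'^*i'_*G$ has bounded cohomology. Equivalently, $G$ has finite Tor-dimension over $\cc O_{X'}$; the hard direction here is that finite flat dimension over $\cc O_{Y'}$ always holds because $Y'$ is smooth, so that condition alone cannot be the right test. I would therefore argue directly with finite Tor-amplitude relative to $X'$. Since $w'$ is a nonzerodivisor (as $w'\ne 0$ on each component), the square is Tor-independent and $X \to X'$ is a base change of $f$. I expect this to be the main obstacle, because $f$ need not be flat, so flatness cannot be used to transfer Tor-amplitude from $X$ to $X'$.

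What I would try first is to use that $E$ is locally free on $X$ and that $f_*$ has finite cohomological dimension, by properness and quasi-projectivity. For any coherent $M$ on $X'$, the projection formula gives
$$f_*E \otimes^L M \cong f_*(E \otimes^L Lf^*M).$$
Because $Lf^*$ from $X'$ to $X$ is the base change of the pullback along $f\colon Y \to Y'$, which has finite Tor-dimension (both varieties are smooth), $Lf^*M$ is bounded. Then $E\otimes^L Lf^*M = E\otimes Lf^*M$ is bounded, and $f_*$ of it is bounded uniformly in $M$. This gives finite Tor-amplitude of $f_*E$, so $f_*E$ is perfect.

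\textbf{Step 4: descent to the singularity categories.} Since $f_*$ is exact and preserves $\op{Perf}$, the universal property of the Verdier quotient yields a triangulated functor $f^{\op{Sg}}_*\colon \op{D}_{\op{Sg}}(X)\to\op{D}_{\op{Sg}}(X')$ making the evident square commute.
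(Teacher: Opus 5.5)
Your argument is correct in substance. It is essentially the standard proof that the paper does not give itself; it only cites Polishchuk--Vaintrob. In outline: $f|_X$ inherits finite Tor-dimension from $f\colon Y\to Y'$ by Tor-independent base change, properness keeps $f_*E$ in $\op{D}^b(X')$, and the projection formula together with finite cohomological dimension bounds the Tor-amplitude uniformly, so $f_*E$ is perfect. Step~4 is then just the universal property of the Verdier quotient.

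Two things need fixing.

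First, the opening sentence of Step~3 is false. Since $i'$ is the inclusion of a Cartier divisor, $Li'^*$ has Tor-amplitude $[-1,0]$. Hence $Li'^*i'_*G$ is bounded for every bounded $G$, perfect or not. Delete that sentence (your next sentence already half-retracts it). Keep only the test you actually use: a complex in $\op{D}^b(X')$ with finite Tor-amplitude over $\cc O_{X'}$ is perfect.

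Second, you attribute Tor-independence of the square to the wrong function. Tor-independence means
$\op{Tor}_1^{\cc O_{Y'}}(\cc O_Y,\cc O_{X'})=\ker(w\colon \cc O_Y\to\cc O_Y)=0$.
So what you need is that $w=w'\circ f$ is a nonzerodivisor on $Y$, which is the standing hypothesis $w\neq 0$. It is not enough that $w'$ is a nonzerodivisor on $Y'$, and the distinction matters. Take $f\colon \CC\to\CC^2$, $x\mapsto (x,0)$, and $w'=xy$. Then $w'$ is a nonzerodivisor, but $w=0$ and $X=Y$. Here $f|_X$ is a closed embedding, hence proper, and $f_*\cc O_X=\cc O_{X'}/(y)$. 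This sheaf has the infinite minimal resolution $\cdots\xrightarrow{y}\cc O_{X'}\xrightarrow{x}\cc O_{X'}\xrightarrow{y}\cc O_{X'}$ at the origin, so it is not perfect. With the hypothesis on $w$ stated correctly, the rest of your argument goes through unchanged.
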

Define the pushforward $f_*\colon \op{MF}(Y, w) \to \op{MF}(Y', w')$ by 
$$f_* = (\Sigma ')^{-1} \circ f_*^{\op{Sg}} \circ \Sigma,$$
where
\begin{align*}
    \Sigma\colon \op{MF}(Y, w) \to \op{D}_{\op{Sg}}(X) ,\;\;\;
    \Sigma'\colon \op{MF}(Y', w') \to \op{D}_{\op{Sg}}(X')
\end{align*}
are the equivalences of Theorem~\ref{t:OrEq}.

Suppose  $v, w\colon Y \to \CC$ are two regular functions on $Y$.
 For $\cc E \in Fact (Y, v)$ and $\cc F \in Fact (Y, w)$, define $\cc E \otimes_{\cc O_Y} \cc F \in \op{Fact}(Y, v + w)$ to be the factorization  
 $$\cc E \otimes_{\cc O_Y} \cc F =\left(T_0, T_1, \phi_0^T, \phi_1^T\right).$$
where 
$$T_0 = E_1 \otimes F_1 \oplus E_0 \otimes F_0 ,\;\;\; T_1 = E_1 \otimes F_0 \oplus E_0 \otimes F_1,$$
with 
$$\phi^T_0 = \left( \begin{array}{cc}\phi^F_1 & - \phi^E_0 \\
\phi^E_1 & \phi^F_0
\end{array}\right),\;\;\; \phi^T_1 = \left( \begin{array}{cc}\phi^F_0 &  \phi^E_0 \\
-\phi^E_1 & \phi^F_1 
\end{array}\right).$$
This induces a linear map 
$$\otimes_{\cc O_Y}\colon K_0(\op{MF}(Y, v)) \otimes K_0(\op{MF}(Y, w)) \to K_0(\op{MF}(Y, v + w)). $$

There is also a tensor product 
$$\otimes_{\cc O_X}\colon  K_0(\op{Perf}(X)) \otimes K_0(\op{D}_{\op{Sg}}(X)) \to K_0(\op{D}_{\op{Sg}}(X))$$ induced by the tensor product on $\op{D}^b(X)$.  In particular,
the identification of $\op{MF}(Y, w)$ with $\op{D}_{\op{Sg}}(X)$ endows $K_0(\op{MF}(Y, w))$ with a $K^0(X)$-module structure.

One can also define an exterior tensor product.  Let  $w_1\colon Y_1 \to \CC $ and $w_2\colon Y_2 \to \CC$  be two LG models. 
Let $Y = Y_1 \times Y_2$ and define the \emph{Sebastiani--Thom sum} to be $w = w_1 \boxplus w_2\colon Y \to \CC$ where $w_1 \boxplus w_2(y_1, y_1) = w_1(y_1) + w_2(y_2).$
For $\cc E \in \op{MF}(Y_1, w_1)$ and $\cc F \in \op{MF}(Y_2, w_2)$, 
define the exterior tensor to be 
$$\cc E\boxtimes \cc F  = \pi_1^*(\cc E) \otimes_{\cc O_Y} \pi_2^*(\cc F) \in \op{MF}(Y, w).$$

\section{Critical $K$-theory}
In this section we define the cohomological and $K$-theoretic groups that will serve as targets for a Chern character for matrix factorizations.  
\subsection{Critical cohomology and $K$-theory}
Let $(Y, w)$ be an LG model as in the previous section with $w \neq 0$, and let $X = w^{-1}(0)$. 
Define the open subset
$$Y_{<0} := Re(w)^{-1}(-\infty, 0),$$
  i.e. $Y_{<0}$ is the preimage under $w$ of the half plane whose real part is negative. 
Let $Z = \op{Crit}(w) = \{dw= 0\}$.  
\begin{assumption}
Throughout the paper we always assume that $Z \subset X$.
 \end{assumption}
 
 Let $U$ be a Euclidean neighborhood of $X$ and consider the pair
$(U, U_{<0})$ where $U_{<0} = Y_{<0} \cap U$.
Given another such neighborhood $V$ of $X$ with $V \subset U$, we have a pullback $K^0_{\op{top}}(U, U_{<0}) \to K^0_{\op{top}}(V, V_{<0})$ in topological $K$-theory.  To avoid notational confusion, we will always decorate our $K$-groups with $\op{alg}$ or $\op{top}$ to distinguish between the algebraic and topological settings.
\begin{definition}\label{d:crit}
    Define the \emph{critical (topological) $K$-theory} of $(Y, w)$ to be the direct limit
    $$\cc K^0(Y, w) := \varinjlim_U K^0_{\op{top}}(U, U_{<0}),$$
    over all neighborhoods $U$ of $X$.

    Define the \emph{critical cohomology} similarly, as
    $$\cc H(Y, w) := \varinjlim_U H^*(U, U_{<0}).$$
\end{definition}
\begin{remark}
    One may also define the critical $K$-groups 
    $$\cc K^i(Y, w) := \varinjlim_U K^i_{\op{top}}(U, U_{<0}),$$
    for $i \neq 0$, although we will not use them in this paper.
\end{remark}
\begin{remark}
Although the critical cohomology can also be defined via the perverse sheaf of vanishing cycles (see e.g. \cite[Proposition~4.2.9]{Dimca} or \cite[Exercise VIII.13]{KS}), we prefer the above formulation as it has a natural analogue in $K$-theory.  Similar to the singular Riemann--Roch theorem of \cite{BFM, BFMKtheory}, most of the results and constructions below are  naturally formulated in $K$-theory.
\end{remark}

We will always take coefficients in the integers except when considering the Chern character, which will naturally land in 
$$\cc H(Y, w; \QQ) := \varinjlim_U H^*(U, U_{<0}; \QQ).$$

\begin{definition}\label{d:pair}
    Let $(A, B)$ be a pair of topological spaces.  If the pair is sufficiently nice, then $K^0(A, B)$ may be defined as the free abelian group generated by isomorphism classes of finite complexes  $$0 \to E_n \to E_{n-1} \to \cdots \to E_0 \to 0,$$
of vector bundles on $A$, exact on $B$, subject to the following three relations:
\begin{enumerate}
    \item \label{id1} if $$0 \to E_\bullet ' \to E_\bullet \to E_\bullet '' \to 0$$
    is an exact sequence of such complexes then 
    $[E_\bullet] = [E_\bullet '] + [E_\bullet ''];$
    \item \label{id2} If $E_\bullet$ is exact on $A$, then $[E_\bullet] = 0$ in $K^0(A,B);$
    \item \label{id3} If $E_\bullet$ is a complex on $A \times [0, 1]$ which is exact on $B \times [0, 1]$, then $[E_\bullet(0)] = [E_\bullet(1)]$, where $E_\bullet(t)$ denotes the restriction to $Y = Y\times \{t\}.$
\end{enumerate}
\end{definition} 
In particular, we will use the above description of relative $K$-theory for
$K^0(Y, Y\setminus X)$.
See \cite[Appendix~1]{BFMKtheory} for details.

Because $Z, X$, and $Y_{<0}$ are semialgebraic subsets of a quasi-projective variety, the triangulation theorem of Lojasiewicz \cite[Theorem~4]{Loj} implies the following.
\begin{lemma}\label{l:triangulate}
    For every neighborhood $U'$ of $Z$ (resp. $X$), there exists a neighborhood $U \subset U'$ of $Z$ (resp. $X$), a pair of finite simplicial complexes $(C_U, C_{U_{<0}})$, and a closed embedding $(C_U, C_{U_{<0}}) \hookrightarrow (U, U_{<0})$ such that $C_U \hookrightarrow U$ and $C_{U_{<0}} \hookrightarrow U_{<0} $ are deformation retracts.
\end{lemma}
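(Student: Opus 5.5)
Lemma~\ref{l:triangulate} should follow from the Lojasiewicz triangulation theorem combined with the standard fact that a subcomplex of a triangulated space has regular neighborhoods onto which it is a deformation retract. Throughout I treat $Y$ with its Euclidean topology. Since $Y$ is quasi-projective, it is a semialgebraic subset of some $\RR^N$, via a projective embedding followed by the standard real-algebraic embedding of $\mathbb{P}^n(\CC)$. Under this embedding $Y$, $X = w^{-1}(0)$, $Z = \{dw=0\}$, and $Y_{<0} = \{Re(w) < 0\}$ are all semialgebraic, because $Re(w)$ is a real polynomial function in local real coordinates.

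First, I would reduce to a compact semialgebraic region. Given $U'$, choose a semialgebraic function $\rho\colon Y \to \RR_{\geq 0}$ with $\rho^{-1}(0) = Z$ (resp. $X$), for instance $|dw|^2$ (resp. $|w|^2$) with respect to a semialgebraic metric. Also choose a compact semialgebraic exhaustion $K_R = \{|y| \leq R\}$. Since $X$ and $Z$ may be noncompact, I cannot use a single $\epsilon$; instead I take a semialgebraic, continuous, positive function $\epsilon(y)$, shrinking at infinity, so that $N = \{\rho \leq \epsilon\}$ lies in $U'$. If needed I would replace $Y$ by a semialgebraic compactification $\ol{Y}$ and work with closures, since Lojasiewicz's theorem triangulates locally finite families of semialgebraic sets.

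Second, I apply \cite[Theorem~4]{Loj} to the finite family $\{N, Z \text{ (or } X), Y_{<0}\cap N, \ol{Y_{<0}} \cap N\}$. This gives a triangulation $h\colon |K| \to N$ in which each set is a union of open simplices. Passing to a second barycentric subdivision $K''$, I define $C_U$ as the simplicial neighborhood of the subcomplex triangulating $X$ (resp. $Z$), meaning the union of closed simplices of $K''$ meeting it. I take $U$ to be its open star, which is an open neighborhood inside $U'$. I set $C_{U_{<0}} = C_U \cap \ol{Y_{<0}}$ restricted to simplices contained in $Y_{<0}$. This is the most delicate point, because $U_{<0}$ is open and its subcomplex model must live inside it. To handle it I would use the simplicial neighborhood, within $\ol{Y_{<0}}$, of the frontier $\{Re(w)=0\}$, and take the complementary full subcomplex, which lies in $Y_{<0}$.

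Third, I establish the deformation retractions. The open star $U$ of a full subcomplex $L$ of $K''$ deformation retracts onto the closed regular neighborhood $C_U$ by the standard straight-line retraction along barycentric coordinates: push points toward the vertices of $L$ until the $L$-weight reaches $1/2$. Because the triangulation is compatible with $Y_{<0}$, this retraction preserves $U_{<0}$. Applying the same argument within $U_{<0}$ relative to the frontier shows that $C_{U_{<0}} \hookrightarrow U_{<0}$ is also a deformation retract.

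I expect the main obstacle to be this simultaneous compatibility. The retraction must both preserve the open set $U_{<0}$ and retract it onto a finite subcomplex, and for finiteness we need compactness. Where $X$ is noncompact I would instead allow finite complexes only after restricting to the compactification.
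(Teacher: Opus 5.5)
Your strategy (a Lojasiewicz triangulation followed by open-star and regular-neighborhood retractions) is the same as the paper's, and when the relevant locus is compact it essentially goes through. The gap is the noncompact case. This is the case that matters for $X=w^{-1}(0)$, and often for $Z$, and two of your steps fail there.

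\textbf{The first failing step: choosing $\epsilon$.} A positive continuous semialgebraic function on a closed semialgebraic set is bounded below by $c(1+|y|^2)^{-k}$, so semialgebraic neighborhoods can only pinch polynomially at infinity. For $U'=\{y:\operatorname{dist}(y,X)<e^{-|y|^2}\}$, no semialgebraic neighborhood $N=\{\rho\le\epsilon\}$ of $X$ fits inside $U'$. If you drop semialgebraicity of $\epsilon$, you lose the triangulation of $N$.

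\textbf{The second failing step: finiteness.} Even granting $N$, a triangulation of a noncompact $N$ is infinite, so your $C_U$ and $C_{U_{<0}}$ are not finite complexes. Finiteness is exactly what the lemma is used for afterwards. It is needed to write $K^0_{\op{top}}(U,U_{<0})=\wt K^0_{\op{top}}(C_U/C_{U_{<0}})$ and to run the Atiyah--Hirzebruch comparison in Proposition~\ref{p:ZvX}. Your closing sentence names this problem without solving it.

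The missing idea, which is how the paper proceeds, is to triangulate the compact projective closure $M\supset Y$ once, with $M\setminus Y$, $\ol Z$, $\ol X$ and $\ol Y_{\le 0}$ as subcomplexes. Then every complex in sight is finite. Take $C_U$ to be the union of the closed simplices contained in $U$. These avoid the subcomplex $M\setminus Y$, so $C_U$ is compact and closed in $U$, and after subdividing near $\ol Z$ (resp.\ $\ol X$), $U$ is its open star. A barycentric subdivision putting the new vertices in $U_{<0}$ makes $U_{<0}$ the open star of $C_{U_{<0}}$, the union of the closed simplices in $U_{<0}$. The usual open-star retractions then finish the argument.

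Be aware that fitting $U$ inside $U'$ near $M\setminus Y$ still needs care in this compactified setting. In a finite triangulation, any open star containing $X$ also contains simplices not in $\ol X$ that have a face in $\ol X$ meeting $M\setminus Y$. Their interiors leave any $U'$ that pinches faster than polynomially there. To get full cofinality you would add an extra input, for instance a semialgebraic trivialization of $(Y,X,Z,Y_{<0})$ outside a large compact set (Hardt's theorem). That lets you retract a thin tube at infinity onto a compact piece.
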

\begin{proof}
    
Let $M$ be a projective variety containing $Y$ as a Zariski open subspace.  Let $Y_{\leq 0} = Y_{<0} \cup X$ and let $\overline Y_{\leq 0}$ denote the closure of $Y_{\leq 0}$ in $M$.

By \cite[Theorem~4]{Loj}, there exists a finite simplicial complex $K$ and a homeomorphism $\Phi\colon K \to M$ such that  $M \setminus Y$ is a the image of a subcomplex $A$, and such that, furthermore, there exists a subcomplex  mapping to $\overline Z$ under $\Phi$  
such that $\overline Z \setminus Z$ is the image of a subcomplex of $A$ (similarly for $\overline X$ and $\overline Y_{\leq 0}$).

Let $U'$ be a Euclidean open neighborhood of $Z$.  Because $\overline Z$ has a regular neighborhood, we may subdivide the simplices adjacent to those in $\overline Z$ in such a way that, after  replacing $K$ with this subdivision $K'$
there is a neighborhood $U \subset U'$ of $Z$ such that $U$ is the open star neighborhood of the finite  simplicial complex formed by the union of those closed simplices contained in $Z.$  

Next, perform a barycentric  subdivision of the subcomplex mapping to $\overline Y_{\leq 0}$, such that, for all simplices whose interior intersects $U_{<0}$, the added vertex lies in $U_{<0}$.  Then, let $C_U$ be the union of all closed simplices contained in $U$, and let $C_{U_{<0}} \subset C_U$ be the union of all closed simplices contained in $U_{<0}.$  Then $U$  is the open star neighborhood of $C_U$ and 
 $U_{<0}$ is the open star neighborhood of the  subcomplex $C_{U_{<0}}$.  

\end{proof} 
\begin{definition}
    
Call  an open subset $U \subset Y$ \emph{CW-finite} if there exists a pair of finite CW complexes $(C_U, C_{U_{<0}})$ and a closed embedding $(C_U, C_{U_{<0}}) \hookrightarrow (U, U_{<0})$ such that $C_U \hookrightarrow U$ and $C_{U_{<0}} \hookrightarrow U_{<0} $ are deformation retracts.

\end{definition}
If $U \subset Y$ is CW-finite, then $$K^0_{\op{top}}(U, U_{<0}) 
=   K^0_{\op{top}}(C_U, C_{U_{<0}}) 
= \wt K^0_{\op{top}}(C_U/C_{U_{<0}}).$$
By the lemma, 
$\cc K^0(Y, w)$ is equal to the limit $\varinjlim_U K^0_{\op{top}}(U, U_{<0}),$
    over all CW-finite neighborhoods $U$ of $X$.
    We use this fact for the following.
    
\begin{proposition} \label{p:ZvX}
    The critical cohomology is equal to $\cc H(Y, w) := \varinjlim_V H^*(V, V_{<0})$ over all neighborhoods $V$ of $Z.$  The same is true for the critical topological $K$-theory.
\end{proposition}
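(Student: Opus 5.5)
The plan is to compare the two direct systems directly and show that the comparison map is an isomorphism. It rests on one observation: away from $Z$ the function $w$ is a submersion, so near the regular part $X\setminus Z$ the pair $(U,U_{<0})$ can be pushed into $U_{<0}$. I will write the argument for $\cc H$ only. The $K$-theory case is word-for-word the same, using the Mayer--Vietoris sequence for $K^*_{\op{top}}$ of pairs, which is available because by Lemma~\ref{l:triangulate} we may restrict to CW-finite neighborhoods.

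\textbf{Step 1 (the comparison map).} Every neighborhood $U$ of $X$ is also a neighborhood of $Z$, since $Z\subset X$ by our standing assumption. So the system indexed by neighborhoods of $X$ is a subsystem of the one indexed by neighborhoods of $Z$. This gives a natural map
$$\varinjlim_U H^*(U,U_{<0}) \to \varinjlim_V H^*(V,V_{<0}).$$
To show it is an isomorphism, it suffices to prove the following. For every neighborhood $V$ of $Z$ there exist a smaller neighborhood $V'\subset V$ of $Z$ and a neighborhood $U\supset X$ with $V'\subset U\subset V$, such that restriction $H^*(U,U_{<0})\to H^*(V',V'_{<0})$ is an isomorphism. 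Surjectivity and injectivity in the limit then follow by a cofinality argument.

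\textbf{Step 2 (the flow near the regular part).} On $Y\setminus Z$ I choose a smooth vector field $\xi$ with $dw(\xi)=-1$, for instance $\xi=-\overline{\nabla w}/|dw|^2$ with respect to a Hermitian metric. Along its flow, $\op{Im} w$ is constant and $\op{Re} w$ decreases at unit speed. Next I fix a closed neighborhood $K\subset V$ of $Z$ and choose a positive continuous function $\epsilon$ on $Y\setminus K$. I take $\epsilon$ small enough that, starting from any point $y$ of
$$W=\{y\in Y\setminus K : |w(y)|<\epsilon(y)\},$$
the $\xi$-flow stays in a prescribed region until $\op{Re} w<0$. This is possible by local compactness, since the flow time needed is at most $2\epsilon(y)$. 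With this choice the flow gives a deformation of $(W,W_{<0})$ into $W_{<0}$, so $H^*(W,W_{<0})=0$. The same holds for $W\cap V'$, once $V'$ is chosen compatibly, namely of the form $\op{int}(K')\cup (W\cap\{\text{collar}\})$, so that it is saturated for the flow near its boundary.

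\textbf{Step 3 (Mayer--Vietoris).} Set $U=V'\cup W$, which is a neighborhood of $X$. The relative Mayer--Vietoris sequence for the cover $U=V'\cup W$ (with $U_{<0}=V'_{<0}\cup W_{<0}$) reads
$$\cdots\to H^*(U,U_{<0})\to H^*(V',V'_{<0})\oplus H^*(W,W_{<0})\to H^*(V'\cap W,(V'\cap W)_{<0})\to\cdots.$$
By Step 2 the second summand and the third term vanish, so $H^*(U,U_{<0})\cong H^*(V',V'_{<0})$, which is what Step 1 requires.

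\textbf{Main obstacle.} I expect the hardest part to be Step 2. One must choose $\epsilon$, $K$ and $V'$ so that the flow lines really remain inside $W$ (respectively inside $W\cap V'$) until they enter the negative half-plane. Two things can go wrong: flow lines may escape to infinity in the non-compact quasi-projective $Y$, and they may run into $K$ near $Z$, where $|dw|\to 0$. My first attempt is to let $\epsilon(y)$ decay fast, relative to $|dw(y)|^2$ and to the distance to $K$. If that proves awkward, I would instead use the triangulation of Lemma~\ref{l:triangulate} to work with regular (open star) neighborhoods, which reduces the construction to finite CW pairs.
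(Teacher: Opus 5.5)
\textbf{Verdict.} Your route differs from the paper's and can be made to work. As written, however, Step~1 is misstated, and Step~2, which carries all the content, fails for the $W$ you define.

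\emph{Step 1.} The requirement $V'\subset U\subset V$ cannot hold, since $U\supset X$ while $V$ is only a neighborhood of $Z$. What the limit argument needs is this: for every neighborhood $V$ of $Z$ and every neighborhood $U_0$ of $X$, there exist a neighborhood $V'\subset V\cap U_0$ of $Z$ and a neighborhood $U\subset U_0$ of $X$ with $V'\subset U$ and $H^*(U,U_{<0})\to H^*(V',V'_{<0})$ an isomorphism. Surjectivity uses only the isomorphism. Injectivity uses $U\subset U_0$: a class on $U_0$ that dies on $V$ also dies on $U$, because its image on $V'$ is zero. Your construction permits this, since $\epsilon$ can be shrunk.

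\emph{Step 2.} The set $W=\{y\in Y\setminus K:|w(y)|<\epsilon(y)\}$ is not preserved by the $\xi$-flow unless $\epsilon$ is nondecreasing along flow lines. Along the flow, $\frac{d}{dt}|w|=-\op{Re}w/|w|$. So at a point of $W$ with $\op{Re}w=0$ and $|w|$ close to $\epsilon$, the flow does not decrease $|w|$. If $\epsilon$ decreases there, the point leaves $W$ before entering $W_{<0}$. Letting $\epsilon$ decay faster does not cure this; the thickness must be constant along flow lines. The fix is flow-box coordinates:
\begin{itemize}
\item $H=\{\op{Re}w=0\}\setminus K$ is a smooth hypersurface transverse to $\xi$. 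For $\epsilon\colon H\to(0,\infty)$ small, $F(h,t)=\Phi^\xi_t(h)$ is an open embedding of $\{(h,t):|t|<\epsilon(h)\}$ into $Y\setminus K$, with $\op{Re}w(F(h,t))=-t$.
\item Put $D=\{(h,t):t^2+(\op{Im}w(h))^2<\epsilon(h)^2\}$ and $W=F(D)$. Then $(W,W_{<0})$ fibres over $\{h\in H:|\op{Im}w(h)|<\epsilon(h)\}$ with fibre $((-a,a),(0,a))$, where $a=(\epsilon^2-(\op{Im}w)^2)^{1/2}$.
\item The homotopy $t\mapsto(1-s)t+s(t+a)/2$ is a deformation of pairs into $W_{<0}$, so $H^*(W,W_{<0})=0$.
\item For $V'$, choose open sets with $K\subset N'$, $\ol{N'}\subset N$ and $\ol N\subset V\cap U_0$, and set $V'=N'\cup F(D\cap\{h\in N\})$. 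Take $\epsilon$ small enough that flow segments over $H\cap N$ stay in $V\cap U_0$ and those over $H\setminus N$ miss $\ol{N'}$. Then $V'\cap W=F(D\cap\{h\in N\})$, which has the same fibred structure.
\end{itemize}
With these choices your Step~3 goes through. For $K$-theory, note that these sets are not the CW-finite neighborhoods of Lemma~\ref{l:triangulate}. Apply Mayer--Vietoris in representable $K$-theory on open subsets of $Y$, and compare with $K^0_{\op{top}}$ on the cofinal CW-finite system.

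\textbf{Comparison with the paper.} The paper identifies the colimit over neighborhoods of $X$ with $\mathbb H^*(X,i^{-1}R\Gamma_{Y_{\geq 0}}\ZZ)$. By Dimca this is a shift of the vanishing cycles $\phi_w\ZZ$, which are supported on $Z$. The $K$-theory case is then deduced from the Atiyah--Hirzebruch spectral sequence, exactness of direct limits, and the comparison theorem. The geometric input is the same as yours: $w$ is a submersion off $Z$. The sheaf-theoretic packaging makes the boundary and noncompactness issues disappear, because vanishing is checked stalkwise. Your argument is elementary and treats cohomology and $K$-theory uniformly via homotopy invariance and Mayer--Vietoris, at the cost of the explicit flow-box construction above.
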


\begin{proof}
By definition, the limit
$\varinjlim_{U\supset X} H^*(U, U_{<0}; \ZZ)$ is equal to 
$$ \mathbb H^* \left(i^{-1} R\Gamma_{Y_{\geq 0}}(\ZZ) \right)$$
where $i\colon X \hookrightarrow Y$ is the inclusion, and $Y_{\geq 0} = Re(w)^{-1}([0, \infty))$.
By \cite[Proposition~4.2.9]{Dimca}, the sheaf $i^{-1} R\Gamma_{Y_{\geq 0}}(\ZZ)$ is naturally isomorphic to (a shift of) the sheaf of vanishing cycles $\phi_w(\ZZ)$. The latter is supported on the critical locus $Z$, and thus the right hand side above is equal to 
$ \mathbb H^* \left(j^{-1} i^{-1} R\Gamma_{Y_{\geq 0}}(\ZZ) \right)$ where $j\colon Z \hookrightarrow X$ is the inclusion.  This is equal to $\varinjlim_{V\supset Z} H^*(V, V_{<0}; \ZZ)$.

For the $K$-theoretic result, we apply the Atiyah-Hirzebruch spectral sequence.  For $U$ a CW-finite open subset of $Y$, we have a spectral sequence with $E_2$-page
$$E_2^{p,q} = H^p(U, U_{<0}; K^q(pt; \ZZ))$$
converging to $K^{p+q}(U, U_{<0}; \ZZ).$

Direct limits are exact in the category of abelian groups.  We therefore have two spectral sequences, obtained by taking the direct limit over all CW-finite neighborhoods of $X$ or $Z$ respectively.  There is a natural map between them, which on the $E_2$ page is given by
$$f_2\colon \varinjlim_{U\supset X} H^p(U, U_{<0}; K^q(pt; \ZZ)) \to \varinjlim_{V\supset Z}  H^p(V, V_{<0}; K^q(pt; \ZZ)).$$
By the cohomological result of the first paragraph, $f_2$ is an isomorphism.  Thus by the comparison theorem, it induces an isomorphism
$$\varinjlim_{U\supset X} K^{p+q}_{\op{top}}(U, U_{<0}; \ZZ) \to \varinjlim_{V\supset Z} K^{p+q}_{\op{top}}(V, V_{<0}; \ZZ)$$
as desired.
\end{proof}

\subsection{Homomorphisms}\label{s:homom}
Let $X \hookrightarrow Y$ be as in the previous section.
Suppose $X$ is embedded in $\CC^n$ as a closed subspace.
Define $K_0^{\op{top}}(X)$  to be $K^0_{\op{top}}(\CC^n, \CC^n \setminus X)$.
If $Y \hookrightarrow \CC^n$ is a closed embedding of $C^\infty$-manifolds and the normal bundle has a complex structure, there is a 
Thom--Gysin isomorphism \cite[Section~1.5]{BFMKtheory}: $$h_{\op{top}}\colon K^0_{\op{top}}(Y, Y\setminus X) \xrightarrow{\cong} K^0_{\op{top}}(\CC^n, \CC^n \setminus X) = K_0^{\op{top}}(X).$$

Let $f \colon (Y, w) \to (Y', w')$ be a morphism of LG models such that $w \neq 0$.  Then there is a pullback map
$$f^* \colon \cc K^0(Y', w') \to \cc K^0(Y, w).$$

If $(Y, w)$ is an LG model and $f \colon S \to Y$ is a map from a quasi-projective variety $S$ such that $f(S) \subset X$, then for any neighborhood $U$ of $X$, there is map of pairs 
$$(S, \emptyset) \to (U, U_{<0}),$$ and a pullback
$f_U^*\colon K^0_{\op{top}}(U, U_{<0}) \to K^0_{\op{top}}(S).$
This induces a pullback map
$$f^*\colon \cc K^0(Y, w) \to K^0_{\op{top}}(S).$$

Given a morphism of LG models $f \colon (Y, w) \to (Y', w'),$ let  $X' = (w')^{-1}(0)$ and   $X = w^{-1}(0) = f^{-1}(X').$
  \begin{definition}\label{d:properLG}
       We say a morphism of LG models $f \colon (Y, w) \to (Y', w')$ is \emph{proper} if there is a Euclidean neighborhood $U'$ of $X'$ such that 
       $$f|_{f^{-1}(U')} \colon f^{-1}(U') \to U'$$
       is proper.
  \end{definition}
 
Proper pushforward in $K$-theory induces a pushforward map in critical $K$-theory for proper morphisms of LG models.
\begin{proposition}
    Let $f \colon (Y, w) \to (Y', w')$ be a proper morphism of LG models.   There is a pushforward map 
    $f_*\colon \cc K^0(Y, w) \to \cc K^0(Y', w')$ (and similarly for critical cohomology).  
    Furthermore, the following diagram commutes
      \begin{equation}  \label{e:PushZCrit}
      \begin{tikzcd}
     K_0^{\op{top}}(X) \ar[d, "f_*"] \ar[r, "i_{<0}^* h_{\op{top}}^{-1}"]& \cc K^0(Y, w)
     \ar[d, "f_*"] \\
           K_0^{\op{top}}(X ')  \ar[r, "(i ')_{<0}^*h_{\op{top}}^{-1}"] & \cc K^0(Y' , w').
        \end{tikzcd}
        \end{equation}
\end{proposition}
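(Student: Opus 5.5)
We will construct $f_*$ at the level of each approximating pair and then pass to the direct limit. Choose $U'$ as in Definition~\ref{d:properLG}, so that $f|_{f^{-1}(U')}\colon f^{-1}(U')\to U'$ is proper. For any neighborhood $V'\subset U'$ of $X'$, set $V=f^{-1}(V')$. Then $V$ is a neighborhood of $X=f^{-1}(X')$, and $f\colon V\to V'$ is proper. Since $w=w'\circ f$, we have $V_{<0}=f^{-1}(V'_{<0})$, so $f$ is a proper map of pairs $(V,V_{<0})\to(V',V'_{<0})$.

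Both $V$ and $V'$ are complex manifolds, so $f$ is complex oriented in the sense of $K$-theory. The proper pushforward of \cite{BFMKtheory} therefore gives a Gysin map
$$f_*\colon K^0_{\op{top}}(V,V_{<0})\to K^0_{\op{top}}(V',V'_{<0}).$$
One model for this map factors $f$ as an embedding $V\hookrightarrow V'\times\CC^N$, applies the Thom isomorphism on a tubular neighborhood followed by extension by zero (using properness), and then uses Bott periodicity to project. Each step is compatible with the subspaces $V_{<0}$ and $V'_{<0}\times\CC^N$, because the tubular neighborhood of $V$ can be taken inside the preimage of $V'_{<0}$ over $V_{<0}$ once it is shrunk fiberwise. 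These maps are compatible with restriction along $W'\subset V'$ by base change for proper pushforward under open restriction. They therefore induce $f_*$ on $\varinjlim_{V'}$. Neighborhoods of the form $f^{-1}(V')$ are cofinal among neighborhoods of $X$: given any neighborhood $U$ of $X$, properness gives a $V'$ with $f^{-1}(V')\subset U$, since $f(V\setminus U)$ is closed in $U'$ and disjoint from $X'$. Hence the source limit is $\cc K^0(Y,w)$. The cohomological statement follows by the same argument with Poincaré-duality Gysin maps.

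For the diagram \eqref{e:PushZCrit}, we factor each horizontal map. The first piece is $h_{\op{top}}^{-1}\colon K_0^{\op{top}}(X)\to K^0_{\op{top}}(Y,Y\setminus X)$. Restricting to $V$ by excision gives $K^0_{\op{top}}(V,V\setminus X)$, and then $i_{<0}^*$ is restriction along $(V,V_{<0})\to(V,V\setminus X)$, which is valid because $V_{<0}\subset V\setminus X$. The diagram then splits into two squares.

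The first square asserts that $h_{\op{top}}$ intertwines $f_*$ on $K_0^{\op{top}}$ with the Gysin pushforward $K^0(V,V\setminus X)\to K^0(V',V'\setminus X')$. This is the covariance of the Thom--Gysin isomorphism proved in \cite{BFMKtheory}: the pushforward on $K_0^{\op{top}}$ is defined so that it agrees with the Gysin map for smooth ambient spaces, via the embedding of $V$ into $V'\times\CC^n$ and functoriality of Thom isomorphisms. Here $f^{-1}(X')=X$ ensures that the supports match.

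The second square asserts that restriction from $(V,V\setminus X)$ to $(V,V_{<0})$ commutes with the Gysin map. This holds because the Gysin map is natural with respect to enlarging the subspace along preimages: $V_{<0}=f^{-1}(V'_{<0})$ and $V\setminus X=f^{-1}(V'\setminus X')$, and each step of the construction (Thom isomorphism, extension by zero, Bott) is natural for maps of pairs of this form.

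The main obstacle is making the Gysin map for pairs rigorous for noncompact $V$, where the subspaces $V_{<0}$ are open rather than closed. I would first pass to CW-finite neighborhoods, as allowed by Lemma~\ref{l:triangulate}, and work with compactly supported relative $K$-theory in the tubular-neighborhood construction. If needed, I would replace $V_{<0}$ by the closed deformation-equivalent subset $Re(w)\le-\epsilon$, and take the limit in $\epsilon$.
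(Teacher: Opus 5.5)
Your outline is the paper's proof with the Fulton--MacPherson bivariant machinery written out as an explicit embedding/Thom/extension/Bott model. The paper identifies $K^0_{\op{top}}(Y,Y_{<0})$ with the bivariant group $K^0(Y_{\geq 0}\to Y)$ and defines $f_*$ as cup product with the orientation class $\{f\}$ followed by bivariant pushforward along $Y_{\geq 0}\to Y'_{\geq 0}$. It gets compatibility with shrinking $U'$ from the base-change axiom, and the square from functoriality of proper pushforward; the latter is what your two-square decomposition amounts to. Your cofinality argument is shorter than the paper's sequential one: a proper map to a locally compact Hausdorff space is closed, so $U'\setminus f(f^{-1}(U')\setminus U)$ is the required neighborhood.

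One step in your construction is justified incorrectly, although the step itself is fine. You cannot in general shrink the tubular neighborhood $T$ of $V\subset V'\times\CC^N$ so that $T|_{V_{<0}}\subset V'_{<0}\times\CC^N$. Suppose $v\in\op{Crit}(w)$ but $dw'_{f(v)}\neq 0$, for example $f(z)=z^2$ with $w'(z)=z$. Then $d(w'\circ\op{pr})$ vanishes on $T_vV$ but is surjective onto $\CC$ on every normal complement. So for $v_1\in V_{<0}$ near $v$, the normal disk at $v_1$ meets $\{Re(w')>0\}$, because its radius is bounded below near $v$.

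No shrinking is needed. The Thom class times the pullback of a class in $K^0(V,V_{<0})$ is exact on $(T\setminus V)\cup T|_{V_{<0}}=T\setminus V_{\geq 0}$. The set $V_{\geq 0}$ is closed in $V'\times\CC^N$, and $V_{\geq 0}=V\cap(V'_{\geq 0}\times\CC^N)$ is disjoint from $V'_{<0}\times\CC^N$. Hence excision followed by restriction lands you in $K^0(V'\times\CC^N,V'_{<0}\times\CC^N)$ directly.

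Treating $K^0(V,V_{<0})$ as $K$-theory supported on the closed set $V_{\geq 0}$ is exactly the paper's identification with $K^0(Y_{\geq 0}\to Y)$. It also removes the ``open versus closed subspace'' obstacle you raise at the end. The Bott step only needs the image of $V_{\geq 0}$ in $V'\times\CC^N$ to be proper over $V'$, and properness of $f$ gives this. So you do not need to replace $V_{<0}$ by $\{Re(w)\le-\epsilon\}$. That replacement would also require proving a homotopy equivalence with $V_{<0}$.
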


\begin{proof}
    We will apply  the bivariant theory of \cite{FuMa}.  Suppose that $A$ and $B$ are topological spaces, each embeddable as closed subsets of Euclidean space $\RR^n.$
In  \cite[Section~3]{FuMa} for a map $f \colon A \to B$ between such spaces, one may define a graded abelian group $K^0(A \to B).$  If $\phi \colon A \to \RR^n$ is a closed  embedding then 
$$K^0(A \to B) := K^0_{\op{top}}( B \times \RR^n, B \times \RR^n \setminus (f,\phi)(A)).$$  In particular, if $f$ is a closed embedding, then $K^0(A \to B) = \\ K^0_{\op{top}}(B, B\setminus A).$   The properties satisfied by these bivariant groups are given in \cite[Section~1]{FuMa}, where in this case the confined morphisms are proper maps, and independent square are fiber squares. In particular, given $$A \xrightarrow{f} B \xrightarrow{g} C$$ with $f$ proper, there is a bivariant pushforward 
$$f_*^{\op{biv}}: K^0(A \to C) \to K^0(B \to C).$$ 

Given an LG model $(Y, w)$, we have, by definition, an identification $K^0_{\op{top}}(Y, Y_{<0}) = K^0(Y_{\geq 0} \to Y),$
where $Y_{\geq 0} = Re(w)^{-1}([0,\infty)).$ 

   Suppose $f\colon (Y, w) \to (Y', w')$ is a proper morphism of LG models.
     If the map on spaces $f\colon Y \to Y'$ is proper one may define a pushforward map $$f_*\colon K^0(Y_{\geq 0} \to Y) \to  K^0(Y'_{\geq 0} \to Y').$$   To be more precise, because $Y$ and $Y'$ are smooth, there is a canonical $K$-theoretic orientation class $\{f\} \in K^0(Y \to Y')$ and the map $f_*$ above is the composition of the cup product  
     $$-\cup \{f\}\colon K^0(Y_{\geq 0} \to Y) \to K^0(Y_{\geq 0} \to Y')$$ 
     with the bivariant pushforward $(f|_{Y_{\geq0}})_*^{\op{biv}}$ associated to the composition
     $$Y_{\geq 0} \xrightarrow{f|_{Y_{\geq0}}} Y'_{\geq 0} \to Y'.$$
    By assumption, there exists a neighborhood  $U'$ of $X'$ such that $f|_{f^{-1}(U')}$
    is proper.  Then by the same argument as in the previous paragraph there exists a pushforward 
\begin{equation}\label{e:pushK}
        K^0(f^{-1}(U')_{\geq 0} \to f^{-1}(U')) \to K^0(U'_{\geq 0} \to U').
    \end{equation}
    To simplify notation, let us replace $Y$ by $f^{-1}(U')$ if necessary, and assume for the remainder of the proof that $f \colon Y \to Y'$ is a proper map of topological spaces.

    To define the pushforward in critical $K$-theory, we must check that,
    for any neighborhood $U$ of $X$, there exists a neighborhood $U'$ of $X'$ such that $f^{-1}(U')$ is contained in $U$.  
    
    Given $U$ a neighborhood of  $X$, note first that $f^{-1}(X') \subset U$.
    Take $x' \in X'$, let $Y_{x'}$ denote the fiber $f^{-1}(x')$.  Let $B_k(x')$ be a sequence of open neighborhoods of $x'$ such that 
    $$\cap_k B_k(x') = x'.$$ Assume without loss of generality that $B_k(x') \subset B_1(x')$ for all $k$, and that $\ol{B_1(x')}$ is compact.
    Consider $V_k(x') = f^{-1}(B_k(x'))$.  Then for some $k$, we claim that $V_k(x') \subset U$.  To see this, assume the contrary, then for each $k$ there exists a $y_k \in V_k(x') \setminus U$.  Then 
    $\{y_k\}_{k=1}^\infty$ is a sequence contained in $f^{-1}(\ol{B_1(x')})$, which is compact by properness of $f$.  Therefore there exists a convergent subsequence $\{y_{k_i}\}_{i=1}^\infty$.  Because $U$ is open, the limit $y_{k_\infty}$ must lie in the boundary of $U$, and therefore is not contained in  $Y_{x'}$.  This is a contradiction, as 
    $$f(y_{k_\infty}) = \lim_i f(y_{k_i}) = x',$$
    where the last equality is because $f(y_{k_i}) \in B_{k_i}(x').$  This proves the claim.  Next, choose a $k$ such that $V_k(x') \subset U$ and let $B(x') = B_k(x').$  Then  
    $$U' = \cup_{x' \in X'} B(x')$$
    gives the desired open subset.  
    
    The map $\cc K^0(Y, w) \to \cc K^0(Y', w')$ may then be defined using the composition of the restriction
    $K^0(U_{\geq 0} \to U) \to K^0(f^{-1}(U')_{\geq 0} \to f^{-1}(U'))$ with the map \eqref{e:pushK}.  It is left to check that this induces a well-defined map $\cc K^0(Y, w) \to \cc K^0(Y', w')$.  For $X' \subset U'' \subset U'$, consider the following fiber diagram:
    \begin{equation} \label{e:pushpull diagram}
    \begin{tikzcd}
    f^{-1}(U'')_{\geq 0} \ar[d, "f '' "] \ar[r] & f^{-1}(U')_{\geq 0} \ar[d, "f' "] \\
    U''_{\geq 0} \ar[d] \ar[r] & U'_{\geq 0} \ar[d] \\
    U'' \ar[r, "i ''"] & U'   
    \end{tikzcd}
    \end{equation}
    By \cite[Axiom (A$_{23}$) of Section~2.2]{FuMa}, $(i'')^* (f')^{\op{biv}}_* = (f'')_*^{\op{biv}} (i'')^*.$ This gives the requisite compatibility.  
    
    For the second statement, consider $U'$ a neighborhood of $X'$.  We have the diagram
\[  \begin{tikzcd}
     K_0^{\op{top}}(X) \ar[d, "f_*"] \ar[rr, "i_{(f^{-1}(U'), f^{-1}(U')_{<0})}^*"]&& K_0^{\op{top}}(f^{-1}(U')_{\geq 0})
     \ar[d, "f_*"] \\
            K_0^{\op{top}}(X')  \ar[rr, "i_{(U', U'_{<0})}^*"] && K_0^{\op{top}}(U'_{\geq 0}),
        \end{tikzcd}
        \] 
     which commutes by functoriality of proper pushforward \cite[Axioms (A$_2$) and (A$_{12}$)]{BFMKtheory}.  This implies \eqref{e:PushZCrit}.
     
\end{proof}

We conclude by remarking that $\cc K^0(Y, w)$ has a natural $K^0_{\op{top}}(X)$-module structure, by identifying $K^0_{\op{top}}(X)$ with $K^0_{\op{top}}(U)$ for $U$ a neighborhood of $X$ which is homotopy equivalent to $X$ and observing that $\cc K^0(Y, w)$ is naturally a $K^0_{\op{top}}(U)$-module.

\subsection{Geometric phases} \label{ss:geomphase}
     Recall the geometric phase LG models defined in Example~\ref{e:geomphase}.  Following the notation from that section, we have the following analogue of Theorem~\ref{t:ish}.
\begin{proposition}\label{p:BPK}
    There is a canonical isomorphism 
    $$j'_* \circ \pi|_Z^*\colon K_i^{\op{top}}(Z) = \cc K^i(Y, w).$$
\end{proposition}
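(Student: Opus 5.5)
We argue in three stages: reduce to the critical locus, then to the restricted bundle, then apply the Thom isomorphism.

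\textbf{Stage 1: reduce to a neighbourhood of $Z$.} Write $V$ for the total space of the rank $r$ bundle over $P$, so $Y=V$ and $w(p,v)=\langle s(p),v\rangle$. The first step is to identify the critical locus. In local coordinates $dw = \sum_i v_i\, ds_i + \sum_i s_i\, dv_i$. Regularity of $s$ means the $ds_i$ are linearly independent along $Z$. Hence $\op{Crit}(w)=Y|_Z = \pi^{-1}(Z)$, which is the zero section over $Z$ together with the fibers over $Z$.

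By Proposition~\ref{p:ZvX} we may then compute $\cc K^i(Y,w)$ as $\varinjlim_V K^i_{\op{top}}(V,V_{<0})$ over neighborhoods $V$ of $Y|_Z$.

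\textbf{Stage 2: pass to $(Y|_N, w)$.} Choose a tubular neighborhood $N$ of $Z$ in $P$. Regularity of $s$ gives a $C^\infty$ identification $N\cong$ normal bundle $\nu_Z\cong V^\vee|_Z$, under which $s$ becomes the tautological section. Over $N$ we then have
\[
Y|_N\cong V|_Z\oplus V^\vee|_Z,
\]
and $w$ becomes the fiberwise pairing $q(v,u)=\langle u,v\rangle$. This is a nondegenerate quadratic form on the complex bundle $W=V|_Z\oplus V^\vee|_Z$ of rank $2r$. The limit over neighborhoods of $Y|_Z$ can be replaced by $K^i(W, W_{<0})$, after using a fiberwise radial deformation. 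That deformation is only homotopic to $w$ near $Y|_Z$, so one has to check it preserves $\{Re<0\}$ up to homotopy; this can be done with relation~(\ref{id3}) of Definition~\ref{d:pair}, or with the triangulation Lemma~\ref{l:triangulate}.

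\textbf{Stage 3: fiberwise computation and Thom isomorphism.} On each fiber, $\mathbb C^{2r}$ with $Re(q)$ a real quadratic form of signature $(2r,2r)$, the pair $(\mathbb C^{2r}, \{Re\, q<0\})$ deformation retracts onto $(D^{2r}, S^{2r-1})$. For the negative definite directions we take the subbundle $\{u=-\bar v\}$, using a Hermitian metric identifying $V^\vee\cong\bar V$; it is isomorphic as a real bundle to $V|_Z$. Globally,
\[
(W,W_{<0})\simeq \bigl(D(V|_Z),S(V|_Z)\bigr),
\]
the Thom space of the complex bundle $V|_Z\to Z$. The Thom isomorphism in $K$-theory then gives
\[
K^i_{\op{top}}(Z)\xrightarrow{\cong}K^i\bigl(D(V|_Z),S(V|_Z)\bigr)\cong \cc K^i(Y,w).
\]
Here $K^i(Z)$ is identified with $K_i^{\op{top}}(Z)$ by Poincaré/Thom duality on $Z$ via Section~\ref{s:homom}. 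This identification is consistent when $Z$ is smooth. If $Z$ is singular one must instead work with $K^0(P,P\setminus Z)$ and the Thom–Gysin map $h_{\op{top}}$, and the same argument goes through with $N$ a regular neighborhood.

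\textbf{Matching with $j'_*\circ\pi|_Z^*$.} It remains to show the isomorphism is the stated composite. Pulling back along $\pi|_Z$ is the homotopy equivalence $Y|_Z\simeq Z$. The pushforward $j'_*$, defined as in the pushforward proposition above, is the Gysin map for the embedding $Y|_Z\subset Y$ followed by $i_{<0}^*$. Its normal bundle is $\pi^*V^\vee|_Z$, whose Koszul/Thom class restricted to $W_{<0}$ should coincide with the Bott class used above. This comparison of Thom classes, the Koszul complex of $s$ against the Bott class of the negative-definite subbundle, is the main obstacle. I would try to construct an explicit homotopy of complexes on $W\times[0,1]$ that stays exact on $W_{<0}\times[0,1]$, in the spirit of Brown's proof of Knörrer/Bott compatibility.
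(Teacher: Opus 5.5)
\textbf{There is a genuine gap.} Your Stages 2--3 produce \emph{an} isomorphism $K^i_{\op{top}}(Z)\to K^i\bigl(D(V|_Z),S(V|_Z)\bigr)\cong\cc K^i(Y,w)$. The statement, however, is that the specific map $i_{<0}^*h_{\op{top}}^{-1}\circ j'_*\circ\pi|_Z^*$ is an isomorphism. The comparison you postpone as ``the main obstacle'' is therefore the actual content, and it is not a formality. Your map depends on choices: giving $W_-=\{u=-\bar v\}$ the complex structure of $V|_Z$ (via $v$) rather than of $V^\vee|_Z$ (via $u$) changes the Thom class by a unit of $K^0_{\op{top}}(Z)$. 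The identification $K^i_{\op{top}}(Z)\cong K_i^{\op{top}}(Z)$ brings in yet another Thom class. So knowing that your map is bijective says nothing about $j'_*\circ\pi|_Z^*$ until the Koszul class of $\pi^*s$ and the Bott class of $W_-$ are actually matched.

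Second, your whole route needs $Z$ smooth: the tubular neighborhood, the normal form $w\approx\langle u,v\rangle$, and duality on $Z$ all use it. The statement only assumes $s$ regular, which gives codimension $r$ but not independence of the $ds_i$, and it is phrased in $K$-homology precisely to allow singular $Z$. Near a singular point of $Z$ there is no normal bundle and no splitting $W=W_+\oplus W_-$, so ``the same argument goes through with $N$ a regular neighborhood'' is unsupported.

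Stage~1 is also wrong as written. If $ds_1,\dots,ds_r$ are independent along $Z$, then $\sum_i v_i\,ds_i=0$ forces $v=0$, so $\op{Crit}(w)$ is $Z$ inside the zero section, not $Y|_Z$. In general one only has $\op{Crit}(w)\subset Y|_Z$. Since Proposition~\ref{p:ZvX} concerns neighborhoods of $\op{Crit}(w)$, passing to neighborhoods of $Y|_Z$ needs its own argument.

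The paper avoids all of this by never using a Thom isomorphism on the critical side. Because $w$ is linear on fibers, fiberwise scaling by $\RR_{>0}$ preserves the sign of $Re(w)$. This lets the paper identify $\cc K^i(Y,w)$ with the global group $K^i_{\op{top}}(Y,Y_{<0})$, and then with $K^i_{\op{top}}(Y,F)$ for $F=w^{-1}(-1)$. The projection $F\to P\setminus Z$ is an affine bundle with fiber $\CC^{r-1}$, hence a homotopy equivalence. The five lemma, applied to the long exact sequences of the pairs $(P,P\setminus Z)$, $(Y,Y\setminus Y|_Z)$ and $(Y,F)$, then shows that
\[
K^i_{\op{top}}(P,P\setminus Z)\xrightarrow{\ \pi^*\ }K^i_{\op{top}}(Y,Y\setminus Y|_Z)\longrightarrow K^i_{\op{top}}(Y,F)
\]
is an isomorphism, where the second map is restriction of supports. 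Under the Thom--Gysin isomorphisms these two maps \emph{are} $\pi|_Z^*$ and $i_{<0}^*h_{\op{top}}^{-1}j'_*$. The identification with the stated composite is therefore automatic, and smoothness of $Z$ is never used.

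If you want to keep your local picture, the repair is the same: replace the Bott/Thom step by the restriction-of-supports map $K^i_{\op{top}}(Y,Y\setminus Y|_Z)\to K^i_{\op{top}}(Y,Y_{<0})$, and prove that it is an isomorphism.
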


\begin{proof}
The zero locus $X$ is $P \cup Y|_Z.$ By \cite[Theorem~4]{Loj} every neighborhood of $Z$ in $P$ contains a smaller neighborhood $N$ of $Z$ which deformation retracts onto $Z$.  If $T$ is a tubular neighborhood of $P$ in $Y$, then $T \cup Y|_N$ is a neighborhood of $X$.  Furthermore, for any neighborhood  $U$ of $X$, one can find a neighborhood $W$ of $X$ equal to $T \cup Y|_N,$ for some $N$ as above.  Note that $(T \cup Y|_N, (T \cup Y|_N)_{<0})$ is homotopy equivalent to  $(T, T_{<0})$.
Therefore we have canonical isomorphisms
$$K^i_{\op{top}}(W, W_{<0}) = K^i_{\op{top}}(T, T_{<0}) = K^i_{\op{top}}(Y, Y_{<0}).$$  Thus for any neighborhood $U$ of $X$ we have pullbacks
$$K^i_{\op{top}}(Y, Y_{<0}) \to K^i_{\op{top}}(U, U_{<0}) \to K^i_{\op{top}}(W, W_{<0}) = K^i_{\op{top}}(Y, Y_{<0})$$
such that the composition is an isomorphism.  This induces a map 
$$\cc K^i(Y, w) \to K^i_{\op{top}}(Y, Y_{<0})$$
which is inverse to the pullback $K^i_{\op{top}}(Y, Y_{<0}) \to \cc K^i(Y, w).$
   
By scaling the fiber coordinate, one can deformation retract $Y_{<0}$ to $F = w^{-1}(-1).$
  The five lemma then implies that  $ K^i_{\op{top}}(Y, Y_{<0}) = K^i_{\op{top}}(Y, F).$  So we can replace $\cc K^i(Y, w)$ with $K^i_{\op{top}}(Y, F)$ for the rest of the proof.

  The projection $F \to P \setminus Z$ is a locally trivial fiber bundle with fiber isomorphic to $\CC^{r-1}$.  Thus the projection $F \to P \setminus Z$ is a homotopy equivalence.  

  Then we have the following map between long exact sequences.
\[
\begin{tikzcd}[column sep=small]
 K^{i-1}_{\op{top}}(P) \ar[r] \ar[d, "="] &K^{i-1}_{\op{top}}(P \setminus Z) \ar[r] \ar[d] & K^i_{\op{top}}(P, P \setminus Z) \ar[r] \ar[d]   & K^i_{\op{top}}(P) \ar[r] \ar[d, "="] &K^i_{\op{top}}(P \setminus Z) \ar[d]  \\
 K^{i-1}_{\op{top}}(Y) \ar[r] \ar[d, "="] &K^{i-1}_{\op{top}}(Y \setminus Y|_Z) \ar[r] \ar[d] & K^i_{\op{top}}(Y, Y \setminus Y|_Z) \ar[r] \ar[d]   & K^i_{\op{top}}(Y) \ar[r] \ar[d, "="] &K^i_{\op{top}}(Y \setminus Y|_Z) \ar[d] \\
K^{i-1}_{\op{top}}(Y) \ar[r] & K^{i-1}_{\op{top}}(F)\ar[r] & K^i_{\op{top}}(Y, F) \ar[r] & K^i_{\op{top}}(Y)\ar[r] & K^i_{\op{top}}(F) 
\end{tikzcd}
\]
The four outer vertical compositions are isomorphisms, thus the center must be as well.
Via the Thom--Gysin isomorphisms $K_i^{\op{top}}(Z) \to K^i_{\op{top}}(P, P \setminus Z)$ and  $K_i^{\op{top}}(Y|_Z) \to K^i_{\op{top}}(Y, Y \setminus Y|_Z)$, the top middle map is identified with $\pi|_Z^*,$  and lower middle map is $j'_*.$
\end{proof}

\section{Chern character}\label{s:checha}

In this section we give the main construction of the paper, a homomorphism $\alpha_{(Y,w)}$
from the Grothendieck group of the category of matrix factorizations of $w$ to the critical topological $K$-theory.

\subsection{Riemann--Roch and topological $K$-theory}
We recall the Riemann--Roch theorem of \cite{BFMKtheory}.  Let $X$ be a quasi-projective variety, and $X \hookrightarrow Y$ an closed embedding into a smooth quasi-projective variety.

Let $K^0_{\op{alg}}(Y, Y\setminus X)$ denote the Grothendieck group of finite complexes of locally free sheaves on $Y$ which are exact off of $X$, subject to the relations \eqref{id1} and \eqref{id2} of Definition~\ref{d:pair}.  There is a homomorphism $$\alpha^* \colon K^0_{\op{alg}}(Y, Y\setminus X) \to K^0_{\op{top}}(Y, Y\setminus X)$$ mapping a complex of locally free sheaves on $Y$ to the associated complex of topological vector bundles.

Let $K_0^{\op{alg}}(X) = K_0\left( \op{D}^b(X)\right)$ denote the Grothendieck group of coherent sheaves on $X$ 
and recall $K_0^{\op{top}}(X)=K^0_{\op{top}}(\CC^n, \CC^n \setminus X)$ was defined in Section~\ref{s:homom}.
There are isomorphisms 
\begin{equation}\label{e:halgtop}
h_{\op{alg}/\op{top}}\colon K^0_{\op{alg}/\op{top}}(Y, Y\setminus X) \xrightarrow{\cong} K_0^{\op{alg}/\op{top}}(X).
\end{equation}  
In the algebraic case, $h_{\op{alg}}(F_\bullet) = \sum (-1)^i H^i(F_\bullet),$ while in the topological case $h_{\op{top}}$ is the Thom--Gysin map \cite[Section~1.5]{BFMKtheory}: $$K^0_{\op{top}}(Y, Y\setminus X) \xrightarrow{\cong} K^0_{\op{top}}(\CC^n, \CC^n \setminus X).$$
Define the map
$\alpha_*\colon K_0^{\op{alg}}(X) \to K_0^{\op{top}}(X),$
as the composition $$\alpha_* := h_{\op{top}} \circ \alpha^* \circ h_{\op{alg}}^{-1}.$$

The Grothendieck--Riemann--Roch theorem of \cite{BFMKtheory} 
is the statement that for $f \colon X \to X'$ a proper map of algebraic varieties, 
 the diagram 
     \begin{equation}\label{e:GRRKSing}  \begin{tikzcd}
     K_0(\op{D}^b(X)) \ar[d, "f_*"] \ar[r, "\alpha_*"]& K_0^{\op{top}}(X) 
     \ar[d, "f_*"] \\
            K_0(\op{D}^b(X'))  \ar[r, "\alpha_* '"] & K_0^{\op{top}}(X')
        \end{tikzcd}
       \end{equation}
commutes.
 
As in the algebraic setting, there is a cap product $$K^0_{\op{top}}(X) \otimes K_0^{\op{top}}(X) \to K_0^{\op{top}}(X)$$ which may be defined as follows.  
Let $U$ be an open neighborhood of $X$ in the Euclidean topology such that the inclusion $X \to U$ is a homotopy equivalence.
Then $K^0_{\op{top}}(X) = K^0_{\op{top}}(U)$, $K_0^{\op{top}}(X)$ is identified with $ K^0_{\op{top}}(U, U \setminus X)$ via $h_{\op{top}}$, and the cap product is given by tensor product. 
The map $\alpha_*$ is compatible with the $K^0_{\op{alg}/\op{top}}(X)$-module structure \cite[Section~4.1]{BFMKtheory}.  Namely,
 for $F \in K^0_{\op{alg}}(X)$ and $S \in K_0^{\op{alg}}(X)$,
\begin{equation}
    \label{e:module}
    \alpha_*(F \otimes S) = \alpha^*(F) \otimes \alpha_*(S).
\end{equation}

\subsection{The construction of $\alpha_{(Y, w)}$}

Suppose $(Y, w)$ is an LG model and $w \neq 0$.  Let $\cc E = (E_0, E_1, \phi_0, \phi_1) \in \op{Ob}\left( \op{MF}(Y, w)\right)$ be a matrix factorization of $w$.  Because $d^2 = w \cdot \op{Id}$, both $\phi_0$ and $\phi_1$ are isomorphisms away from $X = w^{-1}(0).$  
Thus the  complex of vector bundles
\begin{equation}\label{e:assoc}
    E_\bullet:= E_1 \xrightarrow{\phi_1} E_0\end{equation}
is exact on $Y \setminus X.$  Consider the complex of topological vector bundles  $\alpha^*(E_\bullet) \in K^0_{\op{top}}(Y, Y\setminus X)$.

Given an open neighborhood $U$ of $X$, consider the map of pairs 
$$i_{(U, U_{<0})}\colon (U, U_{<0}) \to (Y, Y \setminus X).$$  
Taking the limit over all such open neighborhoods, the pullbacks $$ i_{(U, U_{<0})}^*\colon K^0_{\op{top}}(Y, Y\setminus X) \to K^0_{\op{top}}(U, U_{<0})$$  induce a map  \begin{equation}\label{e:pullbacklim}
    i^*_{<0}\colon K^0_{\op{top}}(Y, Y\setminus X) \to \cc K^0(Y, w). 
\end{equation}
 For a matrix factorization $\cc E$, consider the associated class in critical $K$-theory
\begin{equation}
\label{e:ayw1}  i_{<0}^*\left(\alpha^*(E_\bullet)\right) \in \cc K(Y, w).
\end{equation}

The main result of this section is that \eqref{e:ayw1} depends only on the class of $\cc E$ in the Grothendieck group of $\op{MF}(Y, w).$
\begin{theorem}\label{t:MFch} 
    The map $ \op{Ob}\left( \op{MF}(Y, w)\right) \to \cc K^0(Y, w)$ given by 
    $$\cc E \mapsto i_{<0}^*\left(\alpha^*(E_\bullet)\right)$$
    induces a homomorphism:
    $$\alpha_{(Y,w)}\colon K_0(\op{MF}(Y, w)) \to \cc K^0(Y, w).$$
\end{theorem}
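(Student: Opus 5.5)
We need to show three things: that the class depends only on the isomorphism class of $\cc E$ in $\op{MF}(Y,w)$; that it is additive on exact triangles; and hence that it descends to $K_0$. The plan is to factor the map through the algebraic relative $K$-group $K^0_{\op{alg}}(Y, Y\setminus X)$. The key input is the identification, via $h_{\op{alg}}$ and Corollary~\ref{c:modvb}, of $K_0(\op{MF}(Y,w))$ with $K_0^{\op{alg}}(X)/\op{im}(\Phi)$.

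For a factorization $\cc E$, the complex $E_1 \xrightarrow{\phi_1} E_0$ is a locally free resolution on $Y$ of $\op{Coker}\phi_1$: $\phi_1$ is injective because $\phi_0\phi_1 = w$ and $w$ is a nonzerodivisor. Hence $h_{\op{alg}}([E_\bullet]) = [\op{Coker}\phi_1] = [\Sigma(\cc E)] \in K_0^{\op{alg}}(X)$. We then define the candidate map on $K_0^{\op{alg}}(X)$ by
$$\beta := i^*_{<0}\circ \alpha^*\circ h_{\op{alg}}^{-1}\colon K_0^{\op{alg}}(X) \to \cc K^0(Y,w),$$
which is a homomorphism, since each factor is. With this, $\cc E\mapsto \beta([\op{Cok}\,\cc E])$ is exactly the map in the statement.

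Since $K_0(\op{MF}(Y,w)) \cong K_0^{\op{alg}}(X)/\op{im}\Phi$ and this isomorphism sends $[\cc E]$ to $[\op{Coker}\phi_1]$, it suffices to show that $\beta$ kills $\op{im}(\Phi)$. This is the main point. Let $F$ be a vector bundle on $X$. We need a locally free complex on $Y$ resolving $i_*F$ whose image in $\cc K^0(Y,w)$ vanishes. Topologically, choose a neighborhood $U$ of $X$ that retracts onto $X$, as in Lemma~\ref{l:triangulate}, with retraction $r\colon U\to X$. Via the cap product / module structure \eqref{e:module}, $\alpha_*(F\otimes[\cc O_X]) = \alpha^*(F)\otimes\alpha_*[\cc O_X]$. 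On $U$, the class $h_{\op{top}}^{-1}\alpha_*[\cc O_X]$ is represented by the Koszul-type complex $\cc O\xrightarrow{w}\cc O$, and $\alpha^*(F)$ extends to $r^*F$ on $U$. So on $U$ the class $h_{\op{top}}^{-1}\alpha_*(\Phi F)$ is represented by $r^*F\xrightarrow{w} r^*F$.

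Restricted to the pair $(U,U_{<0})$, this complex is homotopic, relative to $U_{<0}$, to an everywhere exact complex. One uses the homotopy $r^*F\xrightarrow{w+ t}r^*F$ for $t\in[0,1]$, or more carefully $w-t\cdot c$ with $c$ a constant of negative real part adjusted so that no zero occurs on $U_{<0}$. Indeed, on $U_{<0}$ we have $\op{Re}(w)<0$, so $w+t\neq 0$ would fail, but $w - t$ has real part $<0$ and is never $0$. The exact sequence $w - t$ for $t\in[0,1]$ is exact on $U_{<0}\times[0,1]$, and at $t=1$ it is exact wherever $w\neq 1$. To make it exact on all of $U$, shrink $U$ so that $|w|<1/2$ on $U$. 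Relations \eqref{id3} and \eqref{id2} of Definition~\ref{d:pair} then give the class $0$ in $K^0_{\op{top}}(U,U_{<0})$, hence $\beta(\Phi F)=0$.

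The step I expect to be the main obstacle is matching $h_{\op{alg}}^{-1}(\Phi F)$ with the topological complex $r^*F\xrightarrow{w}r^*F$. This needs the compatibility of $h_{\op{top}}$ with the cap-product module structure and the fact that $\alpha^*$ of the algebraic Koszul resolution $\cc O_Y\xrightarrow{w}\cc O_Y$ represents $[\cc O_X]$. Alternatively, one can argue via an exact sequence of complexes using relation \eqref{id1}, after resolving $i_*F$ algebraically on $Y$. The remaining facts — independence of the chosen resolution, additivity, and compatibility of $i_{<0}^*$ with shrinking $U$ — are routine.
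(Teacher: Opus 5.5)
Your proposal is correct and is essentially the paper's proof: you reduce via Corollary~\ref{c:modvb} to showing that $i_{<0}^*h_{\op{top}}^{-1}\alpha_*$ kills classes of vector bundles $P$ on $X$, use \eqref{e:module} to write $h_{\op{top}}^{-1}\alpha_*(P)$ on a neighborhood $U$ retracting to $X$ as $\wt P\otimes\alpha^*(\cc O_Y\xrightarrow{\cdot w}\cc O_Y)$, and show this restricts to zero in $K^0_{\op{top}}(U,U_{<0})$. The only difference is the last step: you use the explicit homotopy $w-t$, $t\in[0,1]$, after shrinking to $U\subset\{|w|<1/2\}$, whereas the paper pulls the class back along $w$ from $K^0_{\op{top}}(\CC,\{\op{Re} z<0\})=0$, which needs no shrinking; your intermediate suggestion $w-tc$ with $\op{Re}c<0$ has the wrong sign, but the $w-t$ homotopy you settle on is valid.
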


Before proving the theorem, we give an equivalent statement.   For a matrix factorization $\cc E = (E_0, E_1, \phi_0, \phi_1)$,  $\phi_1$ is injective and so
$h_{\op{alg}}(E_\bullet) = \op{Coker}(\phi_1)$.  We therefore have the equality
\begin{equation}
    i_{<0}^*\left(\alpha^*(E_\bullet)\right)= i_{<0}^* \left( h_{\op{top}}^{-1}\alpha_*(\op{Coker}(\phi_1))\right),
\end{equation}
where $\h_{\op{top}}$ is the map \eqref{e:halgtop}.

By Corollary~\ref{c:modvb}, Theorem~\ref{t:MFch} is equivalent to the following statement, which is what we will prove.
\begin{theorem}
    The map 
    $i_{<0}^* h_{\op{top}}^{-1}\alpha_*\colon K_0(\op{D}^b(X)) \to \cc K^0(Y, w)$
    induces a map
    $$\alpha_{\op{Sg}}\colon K_0(\op{D}_{\op{Sg}}(X)) \to \cc K^0(Y, w).$$
\end{theorem}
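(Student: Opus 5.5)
By Corollary~\ref{c:modvb} and its proof, $K_0(\op{D}_{\op{Sg}}(X)) = K_0(\op{D}^b(X))/\op{im}(\Phi)$, and $\op{im}(\Phi)$ is generated by classes $[F]$ of locally free sheaves $F$ on $X$. The map $i_{<0}^* h_{\op{top}}^{-1}\alpha_*$ is already a homomorphism on $K_0(\op{D}^b(X))$. So the plan is to show that it vanishes on every such $[F]$.

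First I reduce to the structure sheaf using the module property \eqref{e:module}. Writing $F = F\otimes \cc O_X$ with $F \in K^0_{\op{alg}}(X)$ and $\cc O_X \in K_0^{\op{alg}}(X)$, \eqref{e:module} gives
$$\alpha_*(F) = \alpha^*(F)\otimes \alpha_*(\cc O_X).$$
Here the cap product is realized on a neighborhood $U_0$ of $X$ that homotopy retracts onto $X$. On $U_0$ the topological bundle $\alpha^*(F)$ extends, and the cap product is the tensor product on $K^0_{\op{top}}(U_0,U_0\setminus X)$. The restrictions $i^*_{(U,U_{<0})}$ for $U\subset U_0$ are compatible with tensoring by the restriction of this extended bundle, since pullback of pairs is multiplicative. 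Hence
$$i_{<0}^*h_{\op{top}}^{-1}\alpha_*(F) = \alpha^*(F)\cdot i_{<0}^*h_{\op{top}}^{-1}\alpha_*(\cc O_X),$$
using the $K^0_{\op{top}}(X)$-module structure on $\cc K^0(Y,w)$ noted at the end of Section~\ref{s:homom}. It therefore suffices to prove $i_{<0}^* h_{\op{top}}^{-1}\alpha_*(\cc O_X)=0$.

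For this I use the resolution $\cc O_Y \xrightarrow{w} \cc O_Y$ of $\cc O_X$, which is exact off $X$. Thus $h_{\op{alg}}^{-1}(\cc O_X)$ is this two-term complex, and $h_{\op{top}}^{-1}\alpha_*(\cc O_X)$ is its underlying topological complex $\cc O \xrightarrow{w}\cc O$. Neighborhoods $U$ of $X$ contained in $\{|w|<1/2\}$ are cofinal, so I restrict attention to these. On $U\times[0,1]$ I consider the complex $\cc O\xrightarrow{w-t}\cc O$. On $U_{<0}\times[0,1]$ we have $\op{Re}(w-t)<0$, so this complex is exact there. By relation \eqref{id3} of Definition~\ref{d:pair}, its class at $t=0$ equals its class at $t=1$ in $K^0_{\op{top}}(U,U_{<0})$. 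At $t=1$ the map is multiplication by $w-1$, which is nowhere zero on $U$ since $|w|<1/2$. So that complex is exact on all of $U$, and its class vanishes by relation \eqref{id2}. Hence the class of $\cc O_X$ is zero in each such $K^0_{\op{top}}(U,U_{<0})$, and therefore in the limit $\cc K^0(Y,w)$.

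The step I expect to need the most care is the first reduction. One must check that the cap product with $\alpha^*(F)$ on $K_0^{\op{top}}(X)\cong K^0_{\op{top}}(U_0,U_0\setminus X)$ is compatible with $i_{<0}^*$ and with the module structure on $\cc K^0(Y,w)$. Concretely, this amounts to verifying that tensoring a complex exact on $U\setminus X$ by a vector bundle on $U$ commutes with restriction to the pair $(U,U_{<0})$, which holds at the level of complexes in Definition~\ref{d:pair}. If one wants to avoid this step, an alternative is to realize $F$ directly: extend $F$ to a topological bundle $\wt F$ near $X$. A resolution of $F$ on $Y$ is then topologically equivalent to $\wt F\otimes(\cc O\xrightarrow{w}\cc O)$, and the same homotopy $w-t$ applies to it directly.
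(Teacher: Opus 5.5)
Your proposal is correct and follows the paper's proof: you reduce to $\cc O_X$ via the module property \eqref{e:module}, then show that the class of $\cc O_Y \xrightarrow{\cdot w} \cc O_Y$ vanishes on $(U, U_{<0})$. The only difference is cosmetic: the paper gets this vanishing globally on $(Y, Y_{<0})$ by pulling back $\cc O_{\CC}\xrightarrow{x}\cc O_{\CC}$ along $w$ and using $K^0_{\op{top}}(\CC, \{\mathrm{Re}(z)<0\})=0$, while your explicit homotopy $w-t$ on neighborhoods $U\subset\{|w|<1/2\}$ is just the pulled-back contraction that witnesses that vanishing.
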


\begin{proof}

By Corollary~\ref{c:modvb}, it is enough to show that for a locally free sheaf $P$, $i_{(U, U_{<0})}^* \left(h_{\op{top}}^{-1}\alpha_*(P)\right) = 0$ for a sufficiently small neighborhood $U$ of $X$.

    To begin, consider the special case that $P = \cc O_X$.  This is resolved by locally free sheaves on $Y$ by 
$$\cc O_Y \xrightarrow{\cdot w} \cc O_Y.$$
Therefore $\alpha_*(\cc O_X) = h_{\op{top}}(\alpha^*(\cc O_Y \xrightarrow{\cdot w} \cc O_Y)).$
But  $\cc O_Y \xrightarrow{\cdot w} \cc O_Y$ is the pullback of $\cc O_{\CC} \xrightarrow{x} \cc O_{\CC}$ via the map $w \colon Y \to \CC$ (where $x$ is the coordinate function on $\CC$).    There is a commuting diagram of pairs
    \[ \begin{tikzcd}
        (Y, Y_{<0}) \ar[d, "w"] \ar[r, "i_{(Y, Y_{<0})}"] & (Y, Y \setminus X) \ar[d, "w"] \\
        (\CC, \{Re(z) < 0\}) \ar[r, "i_1"] & (\CC, \CC \setminus 0).
    \end{tikzcd} \]
It follows that 
$$i_{(Y, Y_{<0})}^* \alpha^*(\cc O_Y \xrightarrow{\cdot w} \cc O_Y) = i_{(Y, Y_{<0})}^* w^* \left(\alpha^*(\cc O_{\CC} \xrightarrow{x} \cc O_{\CC}) \right) = w^* i_1^* \left( \alpha^*(\cc O_{\CC} \xrightarrow{x} \cc O_{\CC})\right),$$
but the right hand side must be zero, because $K^0(\CC, \{Re(z) < 0\}) = 0.$

Next, suppose $P$ is a locally free sheaf on $X$.  Let $U$ be an open neighborhood of $X$ in the Euclidean topology such that the inclusion $X \to U$ is a homotopy equivalence, and denote by $\wt P$ the topological vector bundle on $U$ which restricts to $\alpha^*(P)$ when pulled back to $X$.
By \eqref{e:module}, the following diagram commutes:
\[\begin{tikzcd}
  K^0_{\op{alg}}(X)\otimes K_0^{\op{alg}}(X) \ar[r] \ar[d, "\alpha^* \otimes \alpha_*"] & K_0^{\op{alg}}(X) \ar[d, "\alpha_*"] \\
  K^0_{\op{top}}(X)\otimes K_0^{\op{top}}(X) \ar[r] \ar[d, "\cong"] & K_0^{\op{top}}(X) \ar[d, "\cong"] \\
  K^0_{\op{top}}(U)\otimes K^0_{\op{top}}(U, U \setminus X) \ar[r] \ar[d] & K^0_{\op{top}}(U, U \setminus X) \ar[d, , "i_{(U, U_{<0})}^*"] \\
  K^0_{\op{top}}(U)\otimes K^0_{\op{top}}(U, U_{<0}) \ar[r] & K^0(U, U_{<0}) \ar[u, phantom]
\end{tikzcd}
\]
We have
\begin{align*}
    \alpha_*(P) 
    & =  \alpha_*(P \otimes \cc O_X) \\
    & = \alpha^*(P) \otimes \alpha_*(\cc O_X) \\
    &=  \wt P \otimes h_{\op{top}}\left(\alpha^*(\cc O_Y \xrightarrow{\cdot w} \cc O_Y )\right)  \\
    & = h_{\op{top}}\left(\wt P \otimes \alpha^*(\cc O_Y \xrightarrow{\cdot w} \cc O_Y )\right).
\end{align*}
We conclude that
\begin{align*} 
i_{(U, U_{<0})}^*(h_{\op{top}}^{-1}(\alpha_*(P))) &=
i_{(U, U_{<0})}^*\left(\wt P \otimes \alpha^*(\cc O_Y \xrightarrow{\cdot w} \cc O_Y )\right) \\
&= i_{(U, U_{<0})}^*\left(\wt P\right) \otimes i_{(U, U_{<0})}^*\left(\alpha^*(\cc O_Y \xrightarrow{\cdot w} \cc O_Y )\right) \\
&= i_{(U, U_{<0})}^*\left(\wt P\right) \otimes 0,
\end{align*}
where the last equality is the first part of the proof.

\end{proof}

\subsection{Chern character}\label{ss:cc}

For $Z$ a closed subset of a $C^\infty$-manifold $Y$, recall the localized Chern character $\ch_{\op{loc}}\colon K^0_{\op{top}}(Y, Y \setminus Z) \to H^*(Y, Y \setminus Z; \QQ)$ constructed by Iversen and Baum--Fulton--MacPherson \cite{Ive, BFM}. Following \cite{BFM}, define $$ch_*\colon K_0^{\op{top}}(X) = K^0_{\op{top}}(\CC^n, \CC^n \setminus X) \xrightarrow{ch_{\op{loc}}} H^*(\CC^n, \CC^n \setminus X; \QQ) \xrightarrow{[\CC^n] \cap -} H_*(X; \QQ).$$ 
Alternatively,
for $X$ a closed subset of a smooth complex variety $Y$, $\ch_*$ is   equal to the composition
$$K_0^{\op{top}}(X) \xrightarrow{h_{\op{top}}^{-1}} K^0_{\op{top}}(Y, Y \setminus X) \xrightarrow{\op{Td}_Y \cup \ch_{\op{loc}}}  H^*(Y, Y \setminus X; \QQ) \xrightarrow{[Y] \cap -} H_*(X; \QQ).$$ 
The map $\ch_*$ is covariant for proper maps \cite[Section~5]{BFMKtheory}.  

  Given an inclusion of neighborhoods $i'\colon U' \hookrightarrow U$, the localized Chern character  $$\ch_{\op{loc}}\colon K^0(U, U_{<0}) \to H^*(U, U_{<0}; \QQ)$$ 
  commutes with pullback along $i'$, thus  
there is an  induced a homomorphism on critical $K$-theory:
\begin{equation}\label{e:chyw}
    \ch_{(Y,w)}\colon \cc K^0(Y, w) \to \cc H(Y, w; \QQ).
\end{equation}

\begin{definition}
Define 
$\tau_{(Y, w)}\colon K_0(\op{MF}(Y, w)) \to \cc H(Y, w; \QQ)$ by
$$\tau_{(Y, w)}(\cc E) = \op{Td}_Y\cup \ch_{(Y, w)}(\alpha_{(Y, w)}(\cc E)).$$
\end{definition}

    If we identify $H_*(X; \QQ)$ with $H^*(Y, Y \setminus X; \QQ)$ via the cap product with $[Y]$, then $\tau_{(Y, w)}$ is induced by the composition $i_{<0}^* \circ \tau$, where 
    $\tau\colon K_0^{\op{alg}}(X) \to H_*(X;\QQ) = H^*(Y, Y \setminus X; \QQ)$ is the map in the singular Riemann--Roch theorem of \cite{BFM} and 
    $i^*_{<0}\colon H^*(Y, Y\setminus X) \to \cc H(Y, w)$ is the cohomological analogue of \eqref{e:pullbacklim}.  In other words, the following diagram commutes by construction:
  \[	\begin{tikzcd}
\ar[d, "\tau"] \ar[r] K_0^{\op{alg}}(X) & K_0(\op{D}_{\op{Sg}}(X)) \ar[r, "\Sigma^{-1}"]& K_0(\op{MF}(Y, w)) \ar[d, "\tau_{(Y, w)}"] \\
H_*(X;\QQ) \ar[r] & H^*(Y, Y \setminus X; \QQ) \ar[r, "i_{<0}^*"] &
 \cc K^0(Y, w)
	\end{tikzcd}
\]
where the bottom left arrow is the inverse of the map $[Y] \cap -: H^*(Y, Y \setminus X; \QQ) \to H_*(X;\QQ) $.

\subsection{2-periodic complexes}
We conclude this section with a verification that the natural map 
$$\alpha^*: K^0_{\op{alg}}(X) \to K^0_{\op{top}}(X)$$
sending a locally free sheaf to the associated topological vector bundle has a 2-periodic analog.  

Let $S$ be a quasi-projective variety, possibly singular.  Recall that $\op{MF}(S, 0)$ denotes the derived category of matrix factorizations of zero (i.e. 2-periodic complexes) on $S$.
\begin{proposition}
    There is a map $$\alpha^*_{\ZZ/2} \colon K_0(\op{MF}(S, 0)) \to K^0_{\op{top}}(S) $$ given by 
    $$[\cc E] \mapsto [E_0] - [E_1].$$
\end{proposition}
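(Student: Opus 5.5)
We need to show the assignment $\cc E \mapsto [E_0]-[E_1]$ descends to $K_0(\op{MF}(S,0))$. Recall $\op{MF}(S,0) = H^0\op{Fact}(S,0)/\op{Acyc}(S,0)$, so $K_0$ is generated by objects of $\op{Fact}(S,0)$. The relations come from isomorphisms in the Verdier quotient and exact triangles. Since $S$ is quasi-projective, each locally free sheaf $E_i$ gives a topological vector bundle $\alpha^*(E_i)$ on $S$. We first define $\beta(\cc E) := [E_0]-[E_1] \in K^0_{\op{top}}(S)$ on objects of $\op{Fact}(S,0)$ and check three things.

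\emph{Additivity on standard triangles.} For a closed morphism $p\colon \cc E\to\cc F$, the cone has $C(p)_0 = F_0\oplus E_1$ and $C(p)_1 = F_1\oplus E_0$. Hence
\[\beta(\cc C(p)) = [F_0]+[E_1]-[F_1]-[E_0] = \beta(\cc F)-\beta(\cc E).\]
The shift $\cc E[1]$ swaps $E_0$ and $E_1$, so $\beta(\cc E[1]) = -\beta(\cc E)$. Thus $\beta$ is additive on standard triangles, and therefore on all exact triangles once we know $\beta$ is invariant under isomorphism in $\op{MF}(S,0)$.

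\emph{Homotopy invariance.} An isomorphism in $H^0\op{Fact}$ is a homotopy equivalence. Additivity on cones reduces this to showing that $\beta$ vanishes on contractible factorizations, i.e. those $\cc C$ with $\op{id} = Dh$ for some degree-one map $h$. The plan is to use the topological deformation trick, relation \eqref{id3} of Definition~\ref{d:pair} in spirit. Alternatively, and more simply, work in $K^0_{\op{top}}(S)$ directly: the map $\phi + h$, viewed as an endomorphism of the $\ZZ/2$-graded bundle $C_0\oplus C_1$, is odd. The identity $\phi h + h\phi = \op{id}$ together with $\phi^2=0$ makes $(\phi+h)^2 = \op{id} + h^2$, and $h^2$ is nilpotent-free only in good cases. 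So instead use the standard argument: $\op{ker}\phi$ and $\op{im}\phi$ are subbundles because $\phi h$ is an idempotent, as $(\phi h)^2 = \phi(\op{id}-\phi h)h = \phi h$. Hence $C_i = \op{im}(\phi h)\oplus \op{im}(h\phi)$ splits as a direct sum of vector bundles. Moreover $\phi$ maps $\op{im}(h\phi)|_{C_0}$ isomorphically onto $\op{im}(\phi h)|_{C_1}$, and similarly with indices swapped. This gives $[C_0]-[C_1]=0$.

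\emph{Vanishing on $\op{Acyc}(S,0)$.} It suffices to check totalizations of bounded exact complexes $\cc E^i\to\cdots\to\cc E^j$ in $Z^0\op{Fact}(S,0)$. The components $E^m_k$ form, for each $k$, bounded exact complexes of algebraic vector bundles on $S$. Since these split topologically, $\sum_m(-1)^m[E^m_k]=0$ in $K^0_{\op{top}}(S)$, and therefore
\[\beta(\cc T) = \sum_{k,m}(-1)^{k+m}[E^m_k] = 0.\]
Then $\beta$ vanishes on the thick subcategory generated, by the cone additivity and by closure under summands, using $\beta(A\oplus B)=\beta(A)+\beta(B)$. Consequently $\beta$ factors through the Verdier quotient: a morphism invertible in the quotient has a cone in $\op{Acyc}$, so $\beta$ is an isomorphism invariant there, and it is additive on triangles.

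The main obstacle I expect is the homotopy-invariance step. Specifically, we need that images of $\phi h$ are genuine subbundles on a possibly singular $S$. I would handle this by the idempotent argument above, since the image of an idempotent endomorphism of a vector bundle is locally a direct summand, so it is a topological subbundle.
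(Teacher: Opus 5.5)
Your handling of standard triangles, shifts and null-homotopic factorizations is correct. The idempotent splitting via $\phi h$ and $h\phi$ is essentially the paper's first step (its $e_A,\dots,e_D$).

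\textbf{The gap.} The problem is in the last step, vanishing on $\op{Acyc}(S,0)$. Your computation on totalizations of bounded exact complexes is fine. Since $\beta$ is isomorphism-invariant and additive on exact triangles, the class of objects with $\beta=0$ is closed under shifts, isomorphisms and cones. So $\beta$ vanishes on the \emph{triangulated} subcategory generated by the totalizations.

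But $\op{Acyc}(S,0)$ is the \emph{thick} closure, and the kernel of $\beta$ is not closed under direct summands. From $\cc A\oplus\cc B\cong\cc C$ with $\beta(\cc C)=0$, additivity only gives $\beta(\cc A)=-\beta(\cc B)$, not $\beta(\cc A)=0$. An additive invariant vanishing on a triangulated subcategory need not vanish on its thick closure, and you do not prove that the triangulated hull of the totalizations is already thick. Without this, your final claim also fails: a morphism inverted in the Verdier quotient is only known to have its cone in $\op{Acyc}(S,0)$, so you cannot conclude it preserves $\beta$.

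\textbf{How to fix it.} You need a property of objects that \emph{is} closed under summands and that forces $[C_0]=[C_1]$. The paper uses fiberwise exactness, in two steps.
\begin{enumerate}
\item A totalization of a bounded exact complex of locally free sheaves is locally contractible, because short exact sequences of locally free sheaves split locally. Local contractibility passes to cones, shifts and homotopy retracts. Hence every object of $\op{Acyc}(S,0)$ gives a 2-periodic complex of topological bundles that is exact on every fiber.
\item Such a complex has $E_0\cong E_1$. The paper shows this by choosing Hermitian metrics and checking that $D=\phi_1+\phi_0^*\colon E_1\to E_0$ is an isomorphism.
\end{enumerate}
An alternative for the second step: fiberwise exactness gives $\op{rk}\phi_0+\op{rk}\phi_1=\op{rk}E_0$ at every point. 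Ranks are lower semicontinuous, so both are locally constant. The images are then subbundles, and $E_0\cong\op{im}\phi_1\oplus\op{im}\phi_0\cong E_1$.

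So the real obstacle is this summand issue, not homotopy invariance; your idempotent argument already settles the latter.
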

\begin{proof}
We first claim that if $\cc E = (E_0, E_1, \phi_0, \phi_1),$ is null-homotopic, then $ E_0 \cong E_1$.
By assumption we have maps $h_0\colon E_0 \to E_1$ and $h_1\colon E_1 \to E_0$ such that 
\begin{equation}\label{e:homotopy}
    id_0 = h_1 \phi_0 + \phi_1 h_0, \hspace{1 cm} id_1 = h_0 \phi_1 + \phi_0 h_1.    
\end{equation}
Define the endomorphisms 
$e_A= \phi_1 h_0, e_B = h_1 \phi_0\colon E_0 \to E_0$ and 
$e_C = \phi_0 h_1, e_D = h_0 \phi_1\colon E_1 \to E_1.$  It follows from \eqref{e:homotopy} that each of the above is idempotent. Define the locally free sheaves $A = \op{im}(e_A)$, $B = \op{im}(e_B)$, $C = \op{im}(e_C)$, $D = \op{im}(e_D).$ One checks that 
$$E_0 = A \oplus B, \hspace{1 cm} E_1 = C \oplus D,$$
and $\phi_0|_B\colon B \to C$ and $h_0|_A\colon A \to D$ are isomorphisms (with inverses $h_1|_C$ and $\phi_1|_D$ respectively).  The claim follows.

A map $p\colon \cc E \to \cc F$ is an isomorphism in $H^0\op{Fact}(S, 0)$ if it has an inverse up to homotopy, or equivalently, if $C(p)$ is null-homotopic.  By the previous paragraph, in this case $C(p)_0 = F_0 \oplus E_1$ is isomorphic to $C(p)_1 = F_1 \oplus E_0.$ Therefore if $\cc E$ and $\cc F$ are isomorphic in $H^0\op{Fact}(S, 0),$ then
$[E_0] - [E_1] = [F_0] - [F_1]$
in $K^0_{\op{top}}(S).$

For a standard triangle
$$\cc E \xrightarrow{p} \cc F \to C(p) \to \cc E[1]$$
in $H^0\op{Fact}(S, 0),$ it is immediate that
$$[F_0] - [F_1] = [E_0] - [E_1] + [C(p)_0] - [C(p)_1].$$
We conclude that $\cc E \to [E_0] - [E_1]$ induces a well-defined map $$\nu \colon K_0(H^0\op{Fact}(S, 0)) \to K^0_{\op{top}}(S).$$

Finally, one must verify that the objects of $\op{Acyc}(S, 0)$ map to zero under $ \nu$, thus inducing the desired map $\alpha^*_{\ZZ/2}$.  An acyclic factorization is locally contractible, hence the associated 2-periodic complex of topological vector bundles is exact on $Y$.

Let $(E_0, E_1, \phi_0, \phi_1)$ be an exact 2-periodic complex of topological vector bundles.  Choose a Hermitian metric on $E_0$ and $E_1.$  Define the map of vector bundles
$$D = \phi_1 + \phi_0^*: E_1 \to E_0$$
where  $\phi_0^*$ is the adjoint of $\phi_0$ with respect to the chosen metrics.  Consider the map $D^* D: E_1 \to E_1$
\begin{align*}D^* D &= (\phi_1^* + \phi_0)(\phi_1 + \phi_0^*) \\
& =\phi_1^* \phi_1 + \phi_0 \phi_0^*
\end{align*}
where the last equality is because $\phi_0 \phi_1 = \phi_1^* \phi_0^* = 0.$
Then for $\vec v \in E_1|_s$, 
\begin{equation} \label{e:Hermes}
    \br{D^* D \vec v, \vec v} = \br{\phi_1 v, \phi_1 v} + \br{\phi_0^* \vec v, \phi_0^* \vec v} 
\end{equation} 
If $\vec v \in \ker(\phi_1)$, then by exactness $\vec v \in \op{im}(\phi_0)$, and therefore does not lie in $\ker(\phi_0^*)$.  Thus \eqref{e:Hermes} is nonzero for all $\vec v \neq 0$.  In particular $D^* D$ is an isomorphism, so $D$ is an isomorphism.
\end{proof} 

\section{Grothendieck--Riemann--Roch}

 In this section we verify several functorial properties of the map $\alpha_{(Y, w)}$, analogous to those appearing in 
  \cite{BFMKtheory}.

\subsection{Grothendieck--Riemann--Roch in $K$-theory}

\begin{theorem} \label{t:GRR}
Let $f \colon Y \to Y'$ be a morphism of LG models.  Assume that $w$ is not zero  on $Y$.  Then,
    \begin{enumerate}
        \item \label{i:push} if $f$ is proper (Definition~\ref{d:properLG}), there is a commutative diagram
     \[  \begin{tikzcd}
     K_0(\op{MF}(Y, w)) \ar[d, "f_*"] \ar[r, "\alpha_{(Y, w)}"]& \cc K^0(Y, w) 
     \ar[d, "f_*"] \\
            K_0(\op{MF}(Y', w'))  \ar[r, "\alpha_{(Y ', w ')}"] & \cc K^0(Y', w');
        \end{tikzcd}
        \] 
        \item \label{i:pull} there is a commutative diagram
        \[  \begin{tikzcd}
         K_0(\op{MF}(Y', w')) \ar[d, "f^*"] \ar[r, "\alpha_{(Y ', w ')}"] &  \cc K^0(Y', w') \ar[d, "f^*"] \\
     K_0(\op{MF}(Y, w))  \ar[r, "\alpha_{(Y, w)}"]& \cc K^0(Y, w).
        \end{tikzcd}
        \] 
\end{enumerate}
Let $f\colon S \to Y$ be a morphism from a complex variety $S$ to $Y$ such that $f(S) \subset X.$  Then,
\begin{enumerate}[resume]
        \item \label{i: push2} 
        if $f \colon S \to Y$ proper then there is a commutative diagram
\begin{equation*}
\begin{tikzcd}
 K_0^{\op{alg}}(S) \ar[d] \ar[r, "\alpha_*"]& K_0^{\op{top}}(S) 
     \ar[d, " i_{<0}^* h_{\op{top}}^{-1}f_*"] \\
            K_0(\op{MF}(Y, w))  \ar[r, "\alpha_{(Y, w)} "] & \cc K^0(Y, w),
\end{tikzcd}
\end{equation*}
where the left map is induced by the pushforward $\op{D}^b(S) \to \op{D}^b(X);$
\item \label{i: pull2}
there is a commutative diagram
\begin{equation*}
\begin{tikzcd}
      K_0(\op{MF}(Y, w)) \ar[d, "f^*"] \ar[r, "\alpha_{(Y, w)} "] & \cc K^0(Y, w) \ar[d, "f^*"] \\
 K_0(\op{MF}(S, 0))  \ar[r, "\alpha^*_{\ZZ/2}"]& K^0_{\op{top}}(S). 
\end{tikzcd}
\end{equation*}
\end{enumerate}
Furthermore, the map $\alpha_{(Y, w)}$ satisfies the following compatibilities:
\begin{enumerate}[resume]
        \item \label{i:module}
the following diagram commutes
        \[  \begin{tikzcd}
         K^0_{\op{alg}}(X) \otimes K_0(\op{MF}(Y, w)) \ar[d, "\cap"] \ar[r, "\alpha^* \otimes \alpha_{(Y, w)}"] & K^0_{\op{top}}(X) \otimes \cc K^0(Y, w) \ar[d, "\cap"] \\
   K_0(\op{MF}(Y, w))  \ar[r, "\alpha_{(Y, w)}"] & \cc K^0(Y, w);
   \end{tikzcd}
        \] 
        \item \label{i: shift} we have
        $$\alpha_{(Y, w)}(\cc E[1]) = -\alpha_{(Y, w)}(\cc E).$$
        \end{enumerate}
\end{theorem}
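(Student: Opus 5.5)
The strategy is to reduce each item to a known property of the Baum--Fulton--MacPherson transformation $\alpha_*$ (or $\alpha^*$). We use the description $\alpha_{\op{Sg}} = i_{<0}^* h_{\op{top}}^{-1}\alpha_*$ on $K_0(\op{D}^b(X))$, together with naturality of the restriction maps $i_{<0}^*$.

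\textbf{Items \eqref{i:push} and \eqref{i: push2}.} By definition, $f_*$ on $K_0(\op{MF})$ corresponds under $\Sigma$ to the derived pushforward $\op{D}^b(X)\to\op{D}^b(X')$. Take a class in $K_0(\op{MF}(Y,w))$ represented by a coherent sheaf $\op{Coker}\phi_1$ on $X$. Then $\alpha_{(Y',w')}(f_*\cc E) = (i')^*_{<0}h_{\op{top}}^{-1}\alpha'_*(f_*\op{Coker}\phi_1)$. The BFM Riemann--Roch square \eqref{e:GRRKSing} turns this into $(i')^*_{<0}h_{\op{top}}^{-1}f_*\alpha_*(\op{Coker}\phi_1)$. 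The square \eqref{e:PushZCrit} identifies this with $f_*i^*_{<0}h_{\op{top}}^{-1}\alpha_*(\op{Coker}\phi_1)=f_*\alpha_{(Y,w)}(\cc E)$.

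One point needs care: $f|_X$ must be proper for \eqref{e:GRRKSing} to apply. This follows from Definition~\ref{d:properLG}, since $X=f^{-1}(X')$ is closed in $f^{-1}(U')$.

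Item \eqref{i: push2} is the same argument applied to the proper map $S\to X$, with \eqref{e:GRRKSing} for that map and the definition of $\alpha_{\op{Sg}}$. No passage through \eqref{e:PushZCrit} is needed, since the right vertical arrow is already $i^*_{<0}h_{\op{top}}^{-1}f_*$.

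\textbf{Items \eqref{i:pull} and \eqref{i: pull2}.} Here I would use the matrix factorization description directly, $\cc E\mapsto i^*_{<0}\alpha^*(E_\bullet)$. Pullback of factorizations pulls back the two-term complex $E_\bullet$, and $\alpha^*$ commutes with pullback of vector bundles. It therefore suffices to note that $f$ induces a map of pairs $(f^{-1}(U'),f^{-1}(U')_{<0})\to(U',U'_{<0})$ lying over $(Y,Y\setminus X)\to(Y',Y'\setminus X')$, using $w=w'\circ f$. Naturality of relative $K$-theory then gives commutativity in the limit.

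For \eqref{i: pull2}, the composite pair map is $(S,\emptyset)\to(U,U_{<0})\to(Y,Y\setminus X)$. The pullback of the complex $E_1\to E_0$ to $S$ in $K^0_{\op{top}}(S)$ is $[E_0]-[E_1]$, which equals $\alpha^*_{\ZZ/2}(f^*\cc E)$, as required.

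\textbf{Item \eqref{i:module}.} This follows from \eqref{e:module} and the commutative diagram already displayed in the proof of Theorem~\ref{t:MFch}. That diagram shows that $i^*_{(U,U_{<0})}$ intertwines the $K^0_{\op{top}}(U)$-module structures.

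\textbf{Item \eqref{i: shift}.} Write $E[1]_\bullet = (E_0\xrightarrow{-\phi_0}E_1)$. The main obstacle is comparing this with $E_\bullet$ in $K^0_{\op{top}}(Y,Y\setminus X)$, where the two complexes need not agree.

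1. Consider the complex $E_0\xrightarrow{\phi_0}E_1\xrightarrow{\phi_1}E_0$, placed in degrees $2,1,0$. It is exact off $X$, but on the pair $(U,U_{<0})$ a better approach is needed.
2. Instead, work in $K_0(\op{MF})$: there is a standard triangle $\cc E\to 0\to \cc E[1]\to \cc E[1]$ (the cone of $\cc E\to 0$ is $\cc E[1]$). This gives $[\cc E[1]]=-[\cc E]$ already in the Grothendieck group of the triangulated category.
3. Since $\alpha_{(Y,w)}$ is a homomorphism on $K_0(\op{MF}(Y,w))$ by Theorem~\ref{t:MFch}, the identity follows.

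The subtle point is therefore that the shift identity is formal once Theorem~\ref{t:MFch} is known. The genuine content is confined to the pushforward item, through \eqref{e:GRRKSing} and \eqref{e:PushZCrit}.
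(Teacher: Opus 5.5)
Your proposal is correct and follows essentially the same route as the paper: the Baum--Fulton--MacPherson square \eqref{e:GRRKSing} combined with \eqref{e:PushZCrit} for item (1); functoriality of $\alpha_*$ together with the definition of $\alpha_{\op{Sg}}$ for item (3); naturality of $\alpha^*$ and $i^*_{<0}$ under pullback for items (2) and (4); \eqref{e:module} for item (5); and $[\cc E[1]]=-[\cc E]$ in $K_0$ of the triangulated category for item (6). The only blemish is the discarded step 1 of the shift item, where $E_0\xrightarrow{\phi_0}E_1\xrightarrow{\phi_1}E_0$ is not actually a complex (its composite is $w\cdot \mathrm{id}$), but your final argument does not depend on it.
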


\begin{proof}

    To prove \eqref{i:push}, we use the $K$-theoretic Grothendieck--Riemann--Roch statement for the map $f \colon X \to X'$ of singular varieties \eqref{e:GRRKSing}.
Combining this with \eqref{e:PushZCrit}, we have 
 \[  \begin{tikzcd}
     K_0(\op{D}^b(X)) \ar[d, "f_*"] \ar[r, "i_{<0}^* h_{\op{top}}^{-1} \alpha_*"]& \cc K^0(Y, w)
     \ar[d, "f_*"] \\
            K_0(\op{D}^b(X'))  \ar[r, "(i ')_{<0}^* h_{\op{top}}^{-1} \alpha_* ' "] & \cc K^0(Y ', w'),
        \end{tikzcd}
        \] 
which implies the statement.

        Property \eqref{i:pull} follows from the description of $\alpha_{(Y, w)}$ in Theorem~\ref{t:MFch}.  
        For a matrix factorization $\cc E = (E_0, E_1, \phi_0, \phi_1) \in \op{Ob}\left( \op{MF}(Y, w)\right)$, the pullback is given by $f^* \cc E = (f^*E_0, f^*E_1, f^*\phi_0, f^*\phi_1).$  We have
        \begin{align*}
           f^*\left(  \alpha_{(Y', w')}(\cc E)
            \right)
           &= f^*\left(i_{<0}^*\left(\alpha^*(E_\bullet)\right)  \right) \\
           &= i_{<0}^*\left(f^*(\alpha^* (E_\bullet)) \right) \\
           &= i_{<0}^*\left(\alpha^*(f^* E_\bullet) \right) \\
           &= \alpha_{(Y, w)}(f^* \cc E).
        \end{align*} 

         Property \eqref{i: push2} is due to the commutative diagram
        \begin{equation}
\begin{tikzcd}
K_0(\op{D}^b(S)) \ar[d, "f_*"] \ar[r, "\alpha_*"]& K_0^{\op{top}}(S) \ar[d, "f_*"] \\
 K_0(\op{D}^b(X)) \ar[d] \ar[r, "\alpha_*"]& K_0^{\op{top}}(X) 
     \ar[d, " i_{<0}^* h_{\op{top}}^{-1}"] \\
            K_0(\op{D}_{\op{Sg}}(X))  \ar[r, "\alpha_{\op{Sg}} "] & \cc K^0(Y, w).
\end{tikzcd}
\end{equation}
The top square commutes because of the functoriality of $\alpha_*$, while the bottom square commutes simply by the definition of $\alpha_{\op{Sg}}$.  

Property~\eqref{i: pull2} follows from the fact that $\alpha^*$ commutes with pullback.  We have
     \begin{align*}
        f^*\left(\alpha_{(Y, w)}( \cc E)\right) 
            &=
            f^*\left(\alpha^*[E_0 \xrightarrow{\phi_0} E_1]\right) \\
           &= [\alpha^*(f^*( E_0))] - [\alpha^*(f^*( E_1))] \\
           &= \alpha^*_{\ZZ/2}\left( f^* (\cc E)
            \right).
        \end{align*}

        Property \eqref{i:module} follows from \eqref{e:module}.  Property \eqref{i: shift} is due to the fact that the equivalence of categories $\Sigma$ respects the triangulated structure, and \newline $\alpha_*\colon K_0^{\op{alg}}(X) \to K_0^{\op{top}}(X)$ is a group homomorphism.

\end{proof}
Item \eqref{i: push2} may be viewed as a special case of \eqref{i:push} for the
map of LG models $f \colon (S, 0) \to (Y, w)$, except that here we do not require that $S$ be smooth.

As an application of \eqref{i: push2}, we have the following important compatibility for geometric phases, which is closely related to Kn\"orrer periodicity (see the next section for a similar statement).  
\begin{corollary}
    Let $Z \subset X$ and $(Y, w)$ be as in Example~\ref{e:geomphase}.  Then the following diagram commutes:
     \[  \begin{tikzcd}
         K^0_{\op{alg}}(Z) \ar[d, "j'_* \circ \pi|_Z^*"] \ar[r, "\alpha^*"] &  \cc K^0_{\op{top}}(Z) \ar[d, " i_{<0}^* h_{\op{top}}^{-1} j'_* \circ \pi|_Z^*"] \\
     K_0(\op{MF}_{\CC^*}(Y, w))  \ar[r, "\alpha_{(Y, w)}"]& \cc K^0(Y, w).
        \end{tikzcd}
        \] 
\end{corollary}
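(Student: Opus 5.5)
The corollary follows by factoring the left vertical map through the composite $K^0_{\op{alg}}(Z)\to K_0^{\op{alg}}(Y|_Z)\to K_0^{\op{alg}}(X)\to K_0(\op{D}_{\op{Sg}}(X))\cong K_0(\op{MF}(Y,w))$ and then applying item \eqref{i: push2} of Theorem~\ref{t:GRR} with $S = Y|_Z$ and $f = j'$.

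First I will check the hypotheses. The inclusion $j'\colon Y|_Z \hookrightarrow Y$ is a closed embedding, hence proper. Since $s$ vanishes on $Z$, the function $w$ vanishes on $Y|_Z$, so $j'(Y|_Z)\subset X$. Next I will identify the left vertical map with the composite above. The functor $j'_*\circ\pi|_Z^*$ of Theorem~\ref{t:ish} is, on underlying objects, the derived pushforward of the pulled-back sheaf from $Y|_Z$ to $X$, followed by the passage $\op{D}^b(X)\to \op{D}_{\op{Sg}}(X)$ and the inverse of $\Sigma$. In the $\CC^*$-equivariant setting I will take this as the identification of $\op{MF}_{\CC^*}(Y,w)$ with the corresponding graded singularity category, and implicitly interpret Theorem~\ref{t:GRR} equivariantly. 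Having done this, item \eqref{i: push2} gives
$$\alpha_{(Y,w)}\bigl(j'_*\pi|_Z^*F\bigr)= i_{<0}^*h_{\op{top}}^{-1}j'_*\,\alpha_*\bigl(\pi|_Z^*F\bigr).$$

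It then remains to show that $\alpha_*(\pi|_Z^*F)=\pi|_Z^*\alpha^*(F)$ in $K_0^{\op{top}}(Y|_Z)$, where the right-hand side uses the topological pullback along the smooth map $\pi|_Z$. I will argue as follows.
\begin{itemize}
\item Since $\pi|_Z$ is a vector bundle projection, it is flat, so $\pi|_Z^*$ is exact and defines a map on $K_0^{\op{alg}}$.
\item The map $\alpha_*$ of \cite{BFMKtheory} is compatible with pullback along such maps (Verdier--Riemann--Roch for smooth morphisms). Alternatively, I will use the module property \eqref{e:module}: $\alpha_*(\pi|_Z^*F\otimes\cc O_{Y|_Z})=\alpha^*(\pi|_Z^*F)\otimes\alpha_*(\cc O_{Y|_Z})$, which reduces the statement to $F=\cc O_Z$.
\end{itemize}
This interpretation of the right vertical arrow, reading $\alpha^*$ on $Z$ as landing in $K_0^{\op{top}}(Z)$ via the fundamental class, is what the proof of Proposition~\ref{p:BPK} uses through Thom--Gysin maps, and I will adopt the same convention here.

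The main obstacle is this bookkeeping between the algebraic and topological pullbacks $\pi|_Z^*$ on homology-type $K$-groups. Concretely, it amounts to verifying that the Thom--Gysin identifications in Proposition~\ref{p:BPK} match the Baum--Fulton--MacPherson pullback for the smooth morphism $\pi|_Z$. I will try to handle this by realizing both as cup product with the Koszul/Thom class of the bundle, for which $\alpha^*$ visibly commutes.
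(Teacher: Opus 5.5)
Your proposal is correct and follows the paper's own argument, which presents the corollary simply as an application of item (3) of Theorem~\ref{t:GRR} to the closed embedding $j'\colon Y|_Z \hookrightarrow Y$. One small adjustment: no equivariant version of Theorem~\ref{t:GRR} is needed, since $\alpha_{(Y,w)}$ on $K_0(\op{MF}_{\CC^*}(Y,w))$ is understood via the forgetful map to $K_0(\op{MF}(Y,w))$. The compatibility of $\alpha_*$ with $\pi|_Z^*$ that you single out is left implicit in the paper, and it is immediate once $\pi|_Z^*$ on $K_0^{\op{top}}$ is defined through the Thom--Gysin maps as in Proposition~\ref{p:BPK}: pulling back a locally free resolution on $P$ along the flat map $\pi$ gives a resolution of $j'_*\pi|_Z^*(-)$ on $Y$.
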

By Proposition~\ref{p:BPK} and Theorem~\ref{t:ish}, the vertical maps are isomorphisms, and so
the above corollary may be interpreted as saying that if we represent the complete intersection $Z$ by the LG model $(Y, w)$, then map $\alpha_{(Y, w)}$ is identified with the natural map $\alpha^*$.

\subsection{Grothendieck--Riemann--Roch in cohomology}

Theorem~\ref{t:GRR} and the covariance of $\ch_*$ immediately imply the following.
\begin{corollary}\label{c:GRR}
If $f \colon Y \to Y'$ is a proper map of LG models as in the previous theorem, then there is a commutative diagram
     \[  \begin{tikzcd}
     K_0(\op{MF}(Y, w)) \ar[d, "f_*"] \ar[r, "\tau_{(Y, w)}"]& \cc H(Y, w; \QQ) 
     \ar[d, "f_*"] \\
            K_0(\op{MF}(Y', w'))  \ar[r, "\tau_{(Y ', w ')}"] & \cc H (Y', w'; \QQ).
        \end{tikzcd}
        \] 
\end{corollary}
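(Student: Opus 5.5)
The plan is to factor $\tau_{(Y,w)} = \op{Td}_Y \cup \ch_{(Y,w)} \circ \alpha_{(Y,w)}$ and split the square into two. The left square is Theorem~\ref{t:GRR}\eqref{i:push}: $f_* \circ \alpha_{(Y,w)} = \alpha_{(Y',w')} \circ f_*$. So it remains to prove that the right square commutes, namely that for every $\xi \in \cc K^0(Y,w)$
$$f_*\left(\op{Td}_Y \cup \ch_{(Y,w)}(\xi)\right) = \op{Td}_{Y'} \cup \ch_{(Y',w')}(f_*\xi)$$
in $\cc H(Y',w';\QQ)$. Here both pushforwards are the ones constructed in the Proposition defining $f_*$ on critical $K$-theory and cohomology.

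For this step I will work at a fixed level of the direct limit. First, using the argument in the proof of that Proposition, I pick a neighborhood $U'$ of $X'$ such that $f$ is proper over $U'$ and $f^{-1}(U')$ lies in a prescribed neighborhood of $X$. After replacing $Y, Y'$ by $f^{-1}(U'), U'$, I must compare the bivariant $K$-theoretic pushforward
$$K^0(Y_{\geq 0}\to Y) \to K^0(Y'_{\geq 0}\to Y')$$
(cup with the orientation $\{f\}$, then proper pushforward) with its cohomological counterpart. Fulton--MacPherson's Riemann--Roch for bivariant theories, i.e. the Grothendieck transformation $\ch$ from bivariant $K$-theory to bivariant rational cohomology, has two relevant properties: it commutes with bivariant proper pushforward, and it sends the $K$-orientation $\{f\}$ of a map of smooth manifolds to $\op{Td}(T_f)\cup\{f\}_H$ with $T_f = T_Y - f^*T_{Y'}$. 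Together these give
$$f_*\left(\ch_{\op{loc}}(\xi)\cup \op{Td}(T_Y)\, f^*\op{Td}(T_{Y'})^{-1}\right) = \ch_{\op{loc}}(f_*\xi).$$
Multiplying both sides by $\op{Td}_{Y'}$ and applying the projection formula produces the displayed identity. I still need to check that the bivariant $\ch$ on $K^0(Y_{\geq0}\to Y) = K^0(Y,Y_{<0})$ agrees with the localized Chern character of \cite{Ive, BFM}. I will argue this by naturality under the closed embedding into $Y\times\RR^n$ used to define the bivariant groups.

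The final step is passing to the limit. The identity is compatible with shrinking $U'$, because $\ch_{\op{loc}}$ commutes with restriction and the pushforwards satisfy base change, as in diagram \eqref{e:pushpull diagram}. Hence it descends to $\cc K^0$ and $\cc H$.

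The main obstacle is the middle step: relative pairs $(Y,Y_{<0})$ with non-closed supports do not fit the stated hypotheses of BFM's covariance of $\ch_*$ literally. As a fallback I will replace $(U,U_{<0})$ by a CW-finite model (Lemma~\ref{l:triangulate}), or write the class as $i_{<0}^*$ of a class supported on $Y_{\geq 0}$ and use the commutative square \eqref{e:PushZCrit} together with BFM's covariance of $\ch_*$ on $K_0^{\op{top}}$. That fallback suffices on the image of $\alpha_{(Y,w)}$, which is all the corollary requires.
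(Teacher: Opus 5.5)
Your argument is correct, and its skeleton is the paper's. The paper's proof is the single sentence that Theorem~\ref{t:GRR} and the covariance of $\ch_*$ imply the corollary: the left square is Theorem~\ref{t:GRR}\eqref{i:push}, and the right square comes from Baum--Fulton--MacPherson's covariance of $\ch_*$ on $K_0^{\op{top}}$.

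Your fallback is exactly that argument made explicit. On the image of $\alpha_{(Y,w)}$, every class is $\xi = i_{<0}^* h_{\op{top}}^{-1}(\eta)$ with $\eta=\alpha_*(\op{Coker}\phi_1)\in K_0^{\op{top}}(X)$. Note that this class is supported on $X$; "supported on $Y_{\geq 0}$", as you wrote, is vacuous for any class in $K^0(Y,Y_{<0})$. Then $\op{Td}_Y\cup\ch_{(Y,w)}(\xi) = i_{<0}^*([Y]\cap -)^{-1}\ch_*(\eta)$. The cohomological version of \eqref{e:PushZCrit} together with $f_*\ch_* = \ch_* f_*$ then finishes the proof.

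Your main route uses a different key input. You apply the Fulton--MacPherson bivariant Chern character, with the orientation formula $\ch\{f\} = \op{Td}(T_f)\{f\}_H$, directly to $K^0(Y_{\geq 0}\to Y)$. This shows that $\op{Td}\cup\ch$ intertwines the pushforwards on all of $\cc K^0(Y,w)$, a topological Riemann--Roch for critical $K$-theory that is stronger than the corollary needs. The cost is that you must identify the bivariant $\ch$ with $\ch_{\op{loc}}$, invoke the bivariant Riemann--Roch formula, and read the paper's "similarly for critical cohomology" as cupping with $\{f\}_H$. The paper's route instead uses BFM's theorem as stated, plus the cohomological version of \eqref{e:PushZCrit}.

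The obstacle you flag is also milder than you fear. $Y_{\geq 0}$ is closed in $Y$, which is exactly why $\ch_{\op{loc}}$ is defined on $K^0(U,U_{<0})$ in the first place.
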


     \begin{remark}\label{r:cohom}
         Properties~\eqref{i:pull}-\eqref{i: shift} of Theorem~\ref{t:GRR} also have natural counterparts in cohomology.  The statements are essentially the same, with $\tau_{(Y, w)}$ taking the place of $\alpha_{(Y, w)}.$  The proofs follow immediately from Theorem~\ref{t:GRR} and the discussion in Section~\ref{ss:cc}.
     \end{remark}   

\section{Sebastiani--Thom sums}
In this section we prove the compatibility of the map $\alpha_{(Y, w)}$ with Sebastiani--Thom sums.  The Sebastiani--Thom sum of two LG models may be viewed as the analog of the product of varieties.  We prove that the map $\alpha_{(Y, w)}$ is compatible with the associated ``K\"unneth map'' on critical $K$-theory.

Let  $w_1\colon Y_1 \to \CC $ and $w_2\colon Y_2 \to \CC$  be two LG models.  Let $Z_1 = \op{Crit}(w_1)$, $Z_2 = \op{Crit}(w_2)$, $X_1 = w_1^{-1}(0)$, and $ X_2 = w_2^{-1}(0)$. 
Let $Y$ denote the product $Y_1 \times Y_2$ and define the \emph{Sebastiani--Thom sum} to be $$w = w_1 \boxplus w_2\colon Y \to \CC$$ where $w_1 \boxplus w_2(y_1, y_1) = w_1(y_1) + w_2(y_2).$  Let $Z = \op{Crit}(w)$  and $X = w^{-1}(0)$.
Recall the exterior tensor product described in Section~\ref{ss:func}
\begin{align}
    K_0(\op{MF}(Y_1, w_1)) \otimes K_0(\op{MF}(Y_2, w_2)) &\to K_0(\op{MF}(Y, w)) \\
    \cc E\otimes \cc F &\mapsto \cc E \boxtimes \cc F:= \pi_1^*(\cc E) \otimes \pi_2^*(\cc F).   \nonumber 
\end{align}
There is an analogous map in critical $K$-theory.  
\begin{proposition}
    There is a well-defined external product
    $$\cc K^0(Y_1, w_1) \otimes \cc K^0(Y_2, w_2) \to \cc K^0(Y, w).$$
\end{proposition}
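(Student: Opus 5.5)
The plan is to build the external product from the relative cross product in topological $K$-theory. By Proposition~\ref{p:ZvX} it suffices to work with neighborhoods of the critical loci. Note that $Z = Z_1 \times Z_2$, and that $Z \subset X$ holds because $Z_i \subset X_i$. Take neighborhoods $V_1 \supset Z_1$ and $V_2 \supset Z_2$; by Lemma~\ref{l:triangulate} we may assume both are CW-finite. The relative cross product gives
$$K^0_{\op{top}}(V_1, V_{1,<0}) \otimes K^0_{\op{top}}(V_2, V_{2,<0}) \to K^0_{\op{top}}\left(V_1 \times V_2,\; V_{1,<0} \times V_2 \cup V_1 \times V_{2,<0}\right).$$
This is well defined for CW pairs, since the cross product of pairs exists whenever the union is an excisive couple, which holds for open subsets.

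The key geometric step is to compare the pair $(V, A)$, where $V = V_1 \times V_2$ and $A = V_{1,<0}\times V_2 \cup V_1\times V_{2,<0}$, with $(V, V_{<0})$ for $w = w_1 \boxplus w_2$. These are not the same: $A \not\subset V_{<0}$ and $V_{<0}\not\subset A$. I would first shrink to neighborhoods on which $|w_i|$ is small, since $w_i$ vanishes on $Z_i$ and so we may assume $|\op{Re}\, w_i| < \epsilon$ on $V_i$. I would then replace $V_{i,<0}$ by the homotopy-equivalent subset $\{\op{Re}\, w_i < -\delta\}$, or better work with the "thickened" pairs as in the vanishing-cycle Sebastiani--Thom argument (cf. Massey, Dimca). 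The target then becomes the pair
$$(V, B), \qquad B = \{\op{Re}\,w_1 < 0\} \cup \{\op{Re}\,w_2<0\},$$
and we must relate $B$ to $\{\op{Re}(w_1+w_2) <0\}$. I would use the homotopy $H_t$ that deforms the half-plane condition: the region $\{\op{Re}\, w<0\}$ and the union $B$ both deformation retract, within $V$ and up to shrinking, onto a common subset. Concretely, set $u_i = \op{Re}\, w_i$. In the $(u_1,u_2)$-plane, the set $\{u_1<0\}\cup\{u_2<0\}$ and the half plane $\{u_1+u_2<0\}$ both contain the open quadrant $\{u_1<0,u_2<0\}$ and are homotopy equivalent to it relative to the complement of a neighborhood of the origin. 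This suggests the following. The inclusions $(V, C)\to (V,B)$ and $(V,C)\to (V, V_{<0})$, with $C = V_{1,<0}\times V_{2,<0}$, should not be equivalences; instead, use the pair $(V, B\cup V_{<0})$. Pulling back along $(V,V_{<0}) \to (V, B\cup V_{<0})$ and showing that $(V,B)\to(V,B\cup V_{<0})$ induces an isomorphism on $K$-theory after passing to the direct limit gives the map.

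The main obstacle is precisely this last isomorphism. I would prove it first in cohomology, where it is the Sebastiani--Thom isomorphism for vanishing cycles, $\phi_{w_1\boxplus w_2}\cong \phi_{w_1}\boxtimes\phi_{w_2}$ near $Z_1\times Z_2$ (Massey). I would then transfer it to $K$-theory via the Atiyah--Hirzebruch spectral sequence and direct limits, exactly as in the proof of Proposition~\ref{p:ZvX}. Actually, only a map is required, not an isomorphism. An easier route is therefore to construct an explicit deformation that pushes $V_{<0}$ into $B$ inside a small neighborhood: any point of $V_{<0}$ has $u_1<0$ or $u_2<0$, so $V_{<0}\subset B$ automatically. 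The inclusion of pairs $(V, V_{<0})\hookrightarrow (V,B)$ then gives a pullback $K^0(V,B)\to K^0(V,V_{<0})$, and no comparison is needed. Composing the cross product with this pullback yields the product at the level of each pair of neighborhoods.

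Finally, I check compatibility with the direct limits. For $V_1'\subset V_1$ and $V_2'\subset V_2$, naturality of the cross product under maps of pairs and functoriality of the restriction $(V', V'_{<0})\to (V,V_{<0})$ make the squares commute. Products $V_1\times V_2$ are cofinal among neighborhoods of $Z_1\times Z_2$ only up to the usual tube-lemma argument for compact $Z$. In general, I instead observe that the map lands in the direct limit over all neighborhoods of $Z$ (equivalently of $X$ by Proposition~\ref{p:ZvX}), so cofinality is not required: the system $\{V_1\times V_2\}$ maps into the system of neighborhoods of $Z$, and the universal property of $\varinjlim$ gives the well-defined bilinear map.
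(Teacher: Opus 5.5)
Your final argument is correct and is essentially the paper's. You work with neighborhoods of $Z = Z_1\times Z_2$ via Proposition~\ref{p:ZvX}, form the relative cross product into $K^0_{\op{top}}(V_1\times V_2,\, V_{1,<0}\times V_2\cup V_1\times V_{2,<0})$, and pull back along $(V, V_{<0})\hookrightarrow (V, A)$ using $V_{<0}\subset A$. Your remark that product neighborhoods need not be cofinal to get a map on direct limits is correct, and treats the limit slightly more carefully than the paper does.

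You should delete the detour through $(V, B\cup V_{<0})$ and the vanishing-cycle Sebastiani--Thom isomorphism. It starts from the false claim $V_{<0}\not\subset A$; as you later observe yourself, $B = A$ and $V_{<0}\subset A$.
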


\begin{proof}
    For this statement it is necessary to use the alternative description of $\cc K^0(Y, w)$ as the direct limit $\varinjlim_V K^0_{\op{top}}(V, V_{<0})$
    over all neighborhoods $V$ of the critical locus $Z$ (Proposition~\ref{p:ZvX}).
    The key point is that  $Z$ is equal to  $Z_1 \times Z_2$. Thus, given neighborhoods $U_1, U_2$ of $Z_1$, $Z_2$ respectively, we have a neighborhood $U = U_1 \times U_2 $ of $Z$.  
      Let $U_{<0} = U \cap Re(w)^{-1}(-\infty, 0)$.  Note that if $Re(w) <0$, then $Re(w_1)<0$ or $Re(w_2)<0$ (or both), so $U_{<0} \subset (U_1)_{<0} \times U_2 \cup  U_1 \times (U_2)_{<0}$.
   We can have a  pullback map
   $    K^0(U,  (U_1)_{<0} \times U_2 \cup  U_1 \times (U_2)_{<0}) \to K^0(U, U_{<0}).$
    
    We have the following:
    \begin{equation}\label{e:externalProdTop}  
    \begin{tikzcd}
     K^0_{\op{top}}(U_1, (U_1)_{<0}) \otimes K^0_{\op{top}}(U_2, (U_2)_{<0}) \ar[d, " \pi_1^* \otimes \pi_2^*"] \\
    K^0_{\op{top}}(U_1 \times U_2, (U_1)_{<0} \times U_2) \otimes K^0_{\op{top}}(U_1 \times U_2, U_1 \times (U_2)_{<0}) \ar[d, "- \otimes - "] \\
    K^0_{\op{top}}(U,  (U_1)_{<0} \times U_2 \cup  U_1 \times (U_2)_{<0}) \ar[d]\\
    K^0_{\op{top}}(U, U_{<0}).
    \end{tikzcd}
    \end{equation}
    The external product is then induced by this composition.  
    \end{proof}

As a special case of  \eqref{e:externalProdTop}, one has the external product $$K^0_{\op{top}}(Y_1, (Y_1)_{<0}) \otimes K^0_{\op{top}}(Y_2, (Y_2)_{<0}) \to K^0_{\op{top}}(Y, Y_{<0}),$$ and the following diagram commutes
\begin{equation}\label{e:commtens1}
    \begin{tikzcd}K^0_{\op{top}}(Y_1, (Y_1)_{<0}) \otimes K^0_{\op{top}}(Y_2, (Y_2)_{<0}) \ar[d] \ar[r] & K^0_{\op{top}}(Y, Y_{<0}) \ar[d] \\
    \cc K^0(Y_1, w_1) \otimes \cc K^0(Y_2, w_2) \ar[r] & \cc K^0(Y, w).
    \end{tikzcd}
    \end{equation}
where the vertical maps are the respective pullbacks $i_{<0}^*$.

Recall from  Section~\ref{s:checha}  the map
\begin{align*}
\op{Ob}\left( \op{MF}(Y, w)\right) & \to K^0_{\op{top}}(Y, Y \setminus X) \\
\cc E &\mapsto \alpha^*(E_\bullet),
\end{align*}
where,
for $\cc E \in \op{MF}(Y, w)$,
$E_\bullet$ denotes the two-term complex $E_1 \xrightarrow{\phi_1} E_0$.

\begin{proposition}\label{p:tensor1}
Given $\cc E \in \op{Ob}\left( \op{MF}(Y_1, w_1)\right)$ and $\cc F \in \op{Ob}\left( \op{MF}(Y_2, w_2)\right)$, let $\cc T = \cc E \boxtimes \cc F \in \op{Ob}\left( \op{MF}(Y, w)\right)$.  Then
$$\alpha^*(E_\bullet) \boxtimes \alpha^*(F_\bullet) = \alpha^* (T_\bullet)$$
in $K^0_{\op{top}}(Y, Y_{<0}).$

\end{proposition}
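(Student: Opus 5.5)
The plan is to compare two explicit complexes of topological vector bundles on $Y$. Both are exact on $Y_{<0}$, and I would connect them by relations \eqref{id1}--\eqref{id3} of Definition~\ref{d:pair} applied to the pair $(Y, Y_{<0})$. By construction \eqref{e:externalProdTop}, the class $\alpha^*(E_\bullet)\boxtimes\alpha^*(F_\bullet)$ is represented by the tensor product complex
\[
\pi_1^*E_\bullet\otimes\pi_2^*F_\bullet\colon\quad E_1\boxtimes F_1 \to (E_1\boxtimes F_0)\oplus(E_0\boxtimes F_1)\to E_0\boxtimes F_0 .
\]
This complex is exact wherever one of the factors is exact, so on $(Y_1\setminus X_1)\times Y_2\cup Y_1\times(Y_2\setminus X_2)$, and in particular on $Y_{<0}$. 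Its even part is $(E_0\boxtimes F_0)\oplus(E_1\boxtimes F_1)=T_0$ and its odd part is $T_1$. The class $\alpha^*(T_\bullet)$ is represented by the two-term complex $T_1\xrightarrow{\phi_1^T}T_0$. This complex is exact on $Y\setminus X$, since $\phi^T_0\phi^T_1=(w_1+w_2)\,\mathrm{id}$, and hence it is exact on $Y_{<0}$.

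\textbf{Step 1: folding.} I would invoke the standard folding lemma, as in \cite[Appendix~1]{BFMKtheory} and the metric argument used in the proof of $\alpha^*_{\ZZ/2}$ above. Choose Hermitian metrics. A complex $(C_\bullet,d)$ that is exact on $B$ has the same class in $K^0(A,B)$ as the two-term complex $C_{\mathrm{odd}}\xrightarrow{d+d^*}C_{\mathrm{even}}$, and $d+d^*$ is an isomorphism precisely where $C_\bullet$ is exact. The proof is by induction on length: use \eqref{id1} to split off the top term together with a trivially exact piece, and then use \eqref{id3} along a linear homotopy. Applied to the tensor complex, this gives a map $D\colon T_1\to T_0$ whose entries are $\phi^F_1$, $\pm\phi^E_1$ and the adjoints $(\phi^E_1)^*$, $(\phi^F_1)^*$. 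This $D$ has the same shape as $\phi^T_1$, except that $\phi^E_0$ and $\phi^F_0$ are replaced by adjoints of $\phi^E_1$ and $\phi^F_1$.

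\textbf{Step 2: homotopy on $Y_{<0}$.} I would interpolate from $D$ to $\phi^T_1$ by a path $M_s$ over $Y\times[0,1]$. In $M_s$ the off-diagonal entries are $s\,\phi^E_0+(1-s)(\phi^E_1)^*$ and $s\,\phi^F_0+(1-s)(\phi^F_1)^*$, and the other entries stay fixed. By \eqref{id3} it then suffices to show that $M_s$ is an isomorphism at every point of $Y_{<0}$ for every $s$. To check this, I would compose $M_s$ with the analogous map $N_s\colon T_0\to T_1$ built from $\phi^T_0$, and compute $N_sM_s$. It should be block diagonal, with blocks of the form
\[
s\,(w_1+w_2)\,\mathrm{id}+(1-s)\bigl(\phi_1^{E*}\phi^E_1+\phi^E_1\phi_1^{E*}\bigr)\otimes 1+\cdots .
\]
The idea is that $\phi_1\phi_1^*$ and $\phi_1^*\phi_1$ are positive semidefinite, while $w_1+w_2$ has negative real part on $Y_{<0}$. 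Pairing with a nonzero vector $v$ should therefore give an expression whose real part is strictly negative, or otherwise nonvanishing, so $M_s$ is injective. Since $\operatorname{rk}T_0=\operatorname{rk}T_1$, injectivity gives bijectivity. If the signs do not work out directly, I would first rescale by a phase, replacing $(\phi^E_1)^*$ by $-(\phi^E_1)^*$. This does not change the class, because the linear path between the two choices stays invertible where the complex is exact.

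\textbf{Main obstacle.} I expect the invertibility check in Step 2 to be the main difficulty. The cross terms between $\phi_0$ and $\phi_1^*$ do not obviously cancel, so the sign conventions in $\phi^T$ have to be matched carefully with the Koszul signs of the tensor complex. If a single straight-line path fails, my fallback is a two-stage deformation. First, on the region $Y_{<0}$, replace $\phi^E_0$ by $w_1(\phi^E_1)^{-1}$ wherever $w_1\neq 0$, and similarly for $F$. Then use the eigenvalue argument above, factor by factor, so that at each stage only one of $E$ or $F$ is being deformed.
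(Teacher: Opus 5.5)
Your route differs from the paper's and can be made to work: you fold the tensor complex using Hermitian metrics, then deform the resulting map $T_1\to T_0$ linearly to $\phi^T_1$. However, Step~2 fails as you wrote it, exactly at the sign you left open. Write $a=\phi^E_1$, $a'=\phi^E_0$, $b=\phi^F_1$, $b'=\phi^F_0$. The matrix you describe as $M_0$ has top row $(b^*,a^*)$ and bottom row $(-a,b)$. It is the folding of $E_1\boxtimes F_1\xrightarrow{(b,\,a)^T}E_1\boxtimes F_0\oplus E_0\boxtimes F_1\xrightarrow{(-a,\,b)}E_0\boxtimes F_0$. For this choice, the diagonal blocks of $N_sM_s$ are $s(w_1+w_2)+(1-s)P$ with $P\geq 0$, and their real part has no definite sign. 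Already for line bundles, $\det M_s=s(w_1+w_2)+(1-s)(|a|^2+|b|^2)$, which vanishes for some $s\in(0,1)$ wherever $w_1+w_2$ is a negative real number. For example, with $w_1=x$, $w_2=y$, $\cc E=(\cc O_{\CC},\cc O_{\CC},1,x)$ and $\cc F=(\cc O_{\CC},\cc O_{\CC},1,y)$, one gets $\det M_s=1-2s$ at the point $(x,y)=(-1,0)$ of $Y_{<0}$.

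The repairs you suggest do not fix this as stated. Negating only $(\phi^E_1)^*$ is not the folding of any complex, since $d_2=(b,-a)^T$ does not compose to zero with $(-a,b)$. It also still fails, because then $\det M_s=s(w_1+w_2)+(1-s)(|b|^2-|a|^2)$. The linear path from $d+d^*$ to $d-d^*$ passes through $d$, which is not invertible. The two-stage fallback is empty: $\phi^E_0=w_1(\phi^E_1)^{-1}$ already holds wherever $w_1\neq 0$, and relation~(3) of Definition~\ref{d:pair} needs a family over all of $Y\times[0,1]$.

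The correct choice is to negate both adjoints, i.e.\ to fold the complex with $d_2=-(b,a)^T$. This complex is isomorphic to the previous one via $-1$ on $E_1\boxtimes F_1$. It is also what the paper's sign convention for $S_\bullet$ produces after applying $-1$ on $E_1\boxtimes F_0$. So set $\alpha_s=sa'-(1-s)a^*$ and $\beta_s=sb'-(1-s)b^*$. Let $M_s$ have rows $(\beta_s,\alpha_s)$ and $(-a,b)$, and let $N_s$ have rows $(b,-\alpha_s)$ and $(a,\beta_s)$. A direct computation gives the following; the cross terms cancel because the $E$- and $F$-maps act on different tensor factors.
\[
N_sM_s=\begin{pmatrix} s(w_1+w_2)-(1-s)\bigl(a^*a\otimes 1+1\otimes bb^*\bigr) & 0\\ 0 & s(w_1+w_2)-(1-s)\bigl(aa^*\otimes 1+1\otimes b^*b\bigr)\end{pmatrix}.
\]
The positive semidefinite parts have kernels $\ker a\otimes\ker b^*$ and $\ker a^*\otimes\ker b$. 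These vanish wherever $w_1\neq 0$ or $w_2\neq 0$, so on $Y_{<0}$ one has $\op{Re}\br{N_sM_sv,v}<0$ for all $v\neq 0$ and all $s\in[0,1]$. The analogous formula for $M_sN_s$ gives surjectivity, and relation~(3) finishes the proof.

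With this correction your argument is a legitimate alternative: it uses metrics and the folding lemma to reduce the problem to a positivity check on two-term complexes. The paper stays purely algebraic. It deforms the three-term complex $S_\bullet\oplus T_\bullet[-1]$ linearly to a split exact complex, and a column operation reduces exactness to the nonvanishing of $t-\frac{(1-t)^2}{t}(w_1+w_2)$ on $Y_{<0}$. That is the same mechanism: a positive term plus a term with negative real part.
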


\begin{proof}
Given  $\cc E $ and $\cc F$ as in the proposition, 
we have $\alpha^* E_\bullet = E_1 \xrightarrow{\phi^E_1} E_0 \in K^0_{\op{top}}(Y_1, (Y_1)_{<0})$, $\alpha^* F_\bullet = F_1 \xrightarrow{\phi^F_1} F_0 \in K^0_{\op{top}}(Y_2, (Y_2)_{<0})$ and the external tensor product is the three-term complex $S_\bullet \in K^0_{\op{top}}(Y, Y_{<0})$ given by
\[
\begin{tikzcd}[ampersand replacement=\&]
E_1 \boxtimes F_1 
\arrow[r, swap, "{{\begin{pmatrix} 
\phi^F_1 \\ -\phi^E_1 
\end{pmatrix}}}"] \&  E_1 \boxtimes F_0 \oplus E_0 \boxtimes F_1  \ar[r, swap, "{{\begin{pmatrix} 
\phi^E_1
 & 
 \phi^F_1  
\end{pmatrix}}}"]
\& E_0 \boxtimes F_0
\end{tikzcd}
\]
where to simplify notation we also denote by $E_i/F_i$ the associated vector bundle on $Y$ obtained by pullback, and suppress the identity maps (e.g. the map $id_{E_1} \boxtimes \phi_1^F \colon \pi_1^* E_1 \otimes \pi_2^* F_1 \to \pi_1^* E_1 \otimes \pi_1^* F_0$ is denoted simply as $\phi_1^F\colon E_1 \boxtimes F_1 \to E_1 \boxtimes F_0$).

If one first takes the tensor product of matrix factorizations $\cc T = \cc E \boxtimes \cc F \in \op{Ob}\left( \op{MF}(Y, w)\right)$, then the associated two-term complex $T_\bullet \in K^0_{\op{top}}(Y, Y_{<0})$ is given by 
\[
\begin{tikzcd}[ampersand replacement=\&]
E_1 \boxtimes F_0 \oplus E_0 \boxtimes F_1
\arrow[r, swap, "{{\begin{pmatrix} 
\phi^F_0 &  \phi^E_0 \\
-\phi^E_1 & \phi^F_1 
\end{pmatrix}}}"] \&  
E_1 \boxtimes F_1 \oplus E_0 \boxtimes F_0 
\end{tikzcd}.
\]
We must show that $S_\bullet$ and $T_\bullet$ represent the same class in $K^0_{\op{top}}(Y, Y_{<0})$.  For this, consider $$
S_\bullet \oplus T_\bullet[-1] = Q_{-2} \xrightarrow{\phi_{-2}} Q_{-1} \xrightarrow{\phi_{-1}} Q_0,
$$
with 
$$
\phi_{-2} = \left[ \begin{array}{ccc} 
\phi^F_1 & 0 & 0 \\
- \phi^E_1 & 0 & 0 \\
0& -\phi^F_0 & - \phi^E_0 \\
0 & \phi^E_1 & -\phi^F_1 
\end{array}\right], \hspace{1 cm} \phi_{-1} = \left[ \begin{array}{cccc} 
 \phi^E_1 
 & 
 \phi^F_1  &  0 & 0  
\end{array}\right].
$$
We will show that this class is zero in $K$-theory.

Define a complex $C_\bullet= Q_{-2} \xrightarrow{\psi_{-2}} Q_{-1} \xrightarrow{\psi_{-1}} Q_0$
with the same terms as above but with maps given by 
$$
\psi_{-2} = \left[ \begin{array}{ccc} 
0 & -id & 0 \\
0 & 0 & id \\
id& 0 & 0 \\
0 & 0 & 0 
\end{array}\right], \hspace{1 cm} \psi_{-1} = \left[ \begin{array}{cccc} 
 0
 & 
 0  &  0 & id  
\end{array}\right].
$$
This complex is exact on $Y$.

Define the complex $C_\bullet(t)$ to again have the same terms, but with $\psi_{i}(t) = t\psi_{i} + (1-t)\phi_{i}$ for $i = -2, -1.$  Then $C_\bullet(1) = C_\bullet$ and $C_\bullet(0) = S_\bullet \oplus T_\bullet[-1]$. Note that $\psi_{-1}(t) \circ \psi_{-2}(t) = 0.$

We claim that for $0 \leq t \leq 1$, $C_\bullet(t)$ is exact on $Y_{<0}$.  This is clear for $t=  0, 1$, so we will assume $0<t<1$ in what follows.  
The map $\psi_{-1}(t)$ is always surjective, so it suffices to verify that $\psi_{-2}(t)$ is injective.  
On a sufficiently small open subset of $Y$, these maps may be represented as matrices.  
Let $C_1, C_2, C_3$ denote the column blocks of $\psi_{-2}(t)$.  That is, 
$\psi_{-2}(t) = \left[ C_1 | C_2 | C_3\right]$, with 
$$ C_1 = \left[ \begin{array}{c} 
(1-t) \phi^F_1  \\
-(1-t) \phi^E_1  \\
t\cdot id \\
0  
\end{array}\right], \;\; C_2 = \left[ \begin{array}{c} 
-t \cdot id  \\
0  \\
-(1-t) \phi^F_0 \\
(1-t) \phi_1^E
\end{array}\right], \;\; C_3 = \left[ \begin{array}{c} 
0  \\
t \cdot id  \\ 
-(1-t) \phi^E_0 \\
-(1-t) \phi_1^F
\end{array}\right].$$
Apply the column operations
$$C_1 \mapsto \tilde C_1 = C_1 + \frac{1-t}{t} C_2 \circ(id_{E_1} \boxtimes \phi_1^F) + \frac{1-t}{t} \circ C_3 (\phi_1^E \boxtimes id_{F_1}).$$
The new matrix is then given by $\left[ \tilde C_1 | C_2 | C_3\right]$ with 
$$ \tilde C_1 =\left[ \begin{array}{c} 
0  \\
0  \\
t\cdot id - \frac{(1-t)^2}{t} w_2 - \frac{(1-t)^2}{t} w_1 \\
0  
\end{array}\right] = \left( t - \frac{(1-t)^2}{t}(w_1+w_2)\right)\left[ \begin{array}{c} 
0  \\
0  \\
 id \\
0  
\end{array}\right].$$  The scalar $\left( t - \frac{(1-t)^2}{t}(w_1+w_2)\right)$ is zero only if 
$$ t^2 = (1-t)^2(w_1 + w_2),$$
which never occurs on $Y_{<0}$ for $0<t<1$, since the left hand side is positive and the right hand side has negative real part.  From this it is easily seen that the map $\left[ \tilde C_1 | C_2 | C_3\right]$ is full rank on $Y_{<0}$ and thus so is $\psi_{-2}(t)$.

We have shown that  $S_\bullet \oplus T_\bullet[-1]$ and $C_\bullet$ are related by a one-parameter family of complexes exact on $Y_{<0}$.  By condition (3) of Definition~\ref{d:pair}, 
they represent the same class in $K^0_{\op{top}}(Y, Y_{<0})$.  Because $C_\bullet$ is exact on all of $Y$ it is zero in $K^0_{\op{top}}(Y, Y_{<0})$, which implies the claim.
\end{proof}

Proposition~\ref{p:tensor1} and the commuting square \eqref{e:commtens1}  imply the following compatibility.
\begin{theorem}\label{t:ST}
The external product is compatible with the map $\alpha_{(Y, w)}$.  That is, the following diagram commutes:
\begin{equation}\label{e:st}
\begin{tikzcd}
K_0(\op{MF}(Y_1, w_1)) \otimes K_0(\op{MF}(Y_2, w_2))  \ar[d, "-\boxtimes-"] \ar[rr, "\alpha_{(Y_1, w_1)} \otimes \alpha_{(Y_2, w_2)}"] && 
\cc K^0(Y_1, w_1) \otimes \cc K^0(Y_2, w_2) \ar[d, "-\boxtimes-"]\\
 K_0(\op{MF}(Y, w)) \ar[rr, "\alpha_{(Y, w)}"] && \cc K^0(Y, w).
\end{tikzcd}
\end{equation}
\end{theorem}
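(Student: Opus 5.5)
The plan is to check commutativity of \eqref{e:st} on generators. Since every arrow is a group homomorphism (the exterior products are bilinear, and $\alpha_{(Y_i,w_i)}$ is well defined on $K_0$ by Theorem~\ref{t:MFch}), it is enough to take classes $[\cc E]$ and $[\cc F]$ of objects $\cc E \in \op{Ob}(\op{MF}(Y_1,w_1))$ and $\cc F \in \op{Ob}(\op{MF}(Y_2,w_2))$. Going down and then across gives $\alpha_{(Y,w)}(\cc E\boxtimes\cc F) = i_{<0}^*\alpha^*(T_\bullet)$, where $\cc T = \cc E\boxtimes\cc F$. Going across and then down gives $i_{<0}^*\alpha^*(E_\bullet)\boxtimes i_{<0}^*\alpha^*(F_\bullet)$, computed with the product of Proposition (external product) in critical $K$-theory.

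The first step is to lift everything to global relative groups. The class $\alpha^*(E_\bullet)$ lies in $K^0_{\op{top}}(Y_1,Y_1\setminus X_1)$. Since $(Y_1)_{<0}\subset Y_1\setminus X_1$, we may restrict it to $K^0_{\op{top}}(Y_1,(Y_1)_{<0})$, and the image of this restriction in $\cc K^0(Y_1,w_1)$ is exactly $\alpha_{(Y_1,w_1)}(\cc E)$. The pullback $i_{<0}^*$ factors through these global pairs, and then through neighborhoods of $X_1$ or of $Z_1$ via Proposition~\ref{p:ZvX}. The same holds for $\cc F$ and for $\cc T$ on $Y$. One point needs a little care: $i_{<0}^*$ for $(Y,w)$ is defined through neighborhoods of $X$, whereas the external product is defined through neighborhoods of $Z=Z_1\times Z_2$. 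These agree under the identification of Proposition~\ref{p:ZvX}, because both are given by restriction maps from $K^0_{\op{top}}(Y,Y_{<0})$.

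The second step is to apply the commutative square \eqref{e:commtens1}. The top-right path sends $i_{<0}^*\alpha^*(E_\bullet)\otimes i_{<0}^*\alpha^*(F_\bullet)$ to the image in $\cc K^0(Y,w)$ of the global external product $\alpha^*(E_\bullet)\boxtimes\alpha^*(F_\bullet)\in K^0_{\op{top}}(Y,Y_{<0})$. Concretely, that global product is the composite \eqref{e:externalProdTop} with $U_i = Y_i$, namely pullback, then tensor product of complexes, then restriction from $(Y,(Y_1)_{<0}\times Y_2\cup Y_1\times(Y_2)_{<0})$ to $(Y,Y_{<0})$. This is represented by the three-term complex $S_\bullet$ of Proposition~\ref{p:tensor1}.

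The third step is to invoke Proposition~\ref{p:tensor1}, which gives $\alpha^*(E_\bullet)\boxtimes\alpha^*(F_\bullet)=\alpha^*(T_\bullet)$ in $K^0_{\op{top}}(Y,Y_{<0})$. Applying $i_{<0}^*$ then yields $\alpha_{(Y,w)}(\cc E\boxtimes\cc F)$, which closes the diagram.

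The main obstacle is purely bookkeeping. We must confirm that the tensor product of the two complexes, each exact on $(Y_i)_{<0}$, really is the element produced by the product in \eqref{e:externalProdTop}; that is, that the $K$-theoretic product on relative groups is represented by the tensor product of complexes in the description of Definition~\ref{d:pair}. This is standard (see \cite[Appendix~1]{BFMKtheory}), and I would simply cite it.
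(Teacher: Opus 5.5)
Your proposal is correct and follows the paper's argument exactly: the paper deduces Theorem~\ref{t:ST} directly from Proposition~\ref{p:tensor1} together with the commuting square \eqref{e:commtens1}, and these are your second and third steps. Your reduction to classes of objects, and your reconciliation of neighborhoods of $X$ with neighborhoods of $Z$ via Proposition~\ref{p:ZvX}, make explicit bookkeeping that the paper leaves implicit.
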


As a particular example, consider the LG model given by the function $x^2 + y^2$ on $\CC^2.$ We have
$K_0(\op{MF}(\CC^2, x^2 + y^2)) = \ZZ$ by Example~\ref{e:geomphase}, with a generator given by 
$$\cc G = (\cc O_{\CC^2}, \cc O_{\CC^2}, x+iy, x-iy).$$
In critical $K$-theory, by Proposition~\ref{p:BPK}, $\cc K^0(\CC^2, x^2 + y^2) = K_{\op{top}}(pt) \cong \ZZ$.
The matrix factorization $\cc G$ is mapped to a generator $\beta$ via 
$$K_0(\op{MF}(\CC^2, x^2 + y^2)) \xrightarrow{\alpha_{(\CC^2, x^2 + y^2)}} \cc K^0(\CC^2, x^2 + y^2).$$
The following corollary generalizes part of \cite[Theorem~3.33]{Brown} from quasihomogeneous hypersurface singularities to arbitrary LG models, and may be viewed as a compatibility between Kn\"orrer periodicity and Bott periodicity.
\begin{corollary} \label{c:KvB} The following diagram commutes:
    \begin{equation}\label{e:st1}
\begin{tikzcd}
K_0(\op{MF}(Y, w))   \ar[d, "- \boxtimes \cc G" ] \ar[rr, "\alpha_{(Y, w)} "] & &
\cc K^0(Y, w)  \ar[d, "- \boxtimes \beta  "]\\
 K_0(\op{MF}(Y \times \CC^2, w + x^2 + y^2)) \ar[rr, "\alpha_{(Y\times \CC^2, w + x^2 + y^2)}"] & &\cc K^0(Y\times \CC^2, w + x^2 + y^2).
\end{tikzcd}
\end{equation}
   
\end{corollary}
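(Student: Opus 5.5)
The plan is to deduce the corollary as the special case of Theorem~\ref{t:ST} with $(Y_1, w_1) = (Y, w)$ and $(Y_2, w_2) = (\CC^2, x^2+y^2)$. Here $w \boxplus (x^2+y^2) = w + x^2 + y^2$ on $Y \times \CC^2$. The critical locus of $x^2+y^2$ is the origin, which lies in its zero fiber, so the standing assumption holds for both factors and hence for the sum. For any $\cc E \in \op{MF}(Y, w)$, commutativity of \eqref{e:st} applied to $\cc E \otimes \cc G$ gives
$$\alpha_{(Y\times\CC^2, w+x^2+y^2)}(\cc E \boxtimes \cc G) = \alpha_{(Y,w)}(\cc E) \boxtimes \alpha_{(\CC^2, x^2+y^2)}(\cc G).$$
It therefore suffices to check that $\alpha_{(\CC^2, x^2+y^2)}(\cc G) = \beta$, with $\beta$ defined as the image of $\cc G$ (or a generator identified with it). Both sides of \eqref{e:st1} are additive in $\cc E$, so the statement on objects descends to $K_0$.

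The one point requiring care is the identification of $\beta$. If $\beta$ is \emph{defined} as $\alpha_{(\CC^2,x^2+y^2)}(\cc G)$, the corollary is immediate. If instead $\beta$ denotes the Bott generator of $\cc K^0(\CC^2, x^2+y^2)\cong K^0_{\op{top}}(pt)$ under Proposition~\ref{p:BPK}, we must verify that the image of $\cc G$ is a generator. For this, use the linear change of coordinates $u = x+iy$, $v = x-iy$, so that $x^2+y^2 = uv$ on $\CC^2 = \CC_u \times \CC_v$. This is the geometric phase of Example~\ref{e:geomphase} with $P = \CC_u$, $V$ the trivial line bundle, and $s = u$, so $Z = \{0\}$.

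In these coordinates $\cc G = (\cc O, \cc O, u, v)$, and $\op{Coker}(v) = \cc O_{\{v=0\}} = j'_*\pi|_Z^*(\cc O_{pt})$ in the notation of \eqref{e:csquare}, up to swapping the roles of $u$ and $v$. By Theorem~\ref{t:GRR}\eqref{i: push2} (equivalently, the geometric phase corollary) and Proposition~\ref{p:BPK}, $\alpha(\cc G)$ corresponds to $\alpha^*(\cc O_{pt}) = 1 \in K^0_{\op{top}}(pt)$, which is a generator. Thus $\alpha(\cc G) = \pm\beta$, and with the sign convention fixed by this identification the diagram commutes.

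The only potential obstacle is this sign or normalization issue in matching $\beta$ with the Bott class. Everything else is a direct specialization of Theorem~\ref{t:ST}.
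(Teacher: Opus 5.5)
Your proposal is correct and follows the paper's route: the corollary is Theorem~\ref{t:ST} specialized to $(Y_2,w_2)=(\CC^2,x^2+y^2)$, with $\beta$ being $\alpha_{(\CC^2,x^2+y^2)}(\cc G)$ as in the paragraph just before the statement. You also check that this class is a generator, by changing coordinates to $u=x+iy$, $v=x-iy$ so that $x^2+y^2=uv$ becomes a geometric phase, and then applying Theorem~\ref{t:GRR} and Proposition~\ref{p:BPK}; this is a sound addition to what the paper only asserts.
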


As a corollary to Theorem~\ref{t:ST}, we obtain an analogous result for the internal tensor product.
Let $v, w \colon Y \to \CC$ be two regular functions on $Y$. The map 
$$\otimes_{\cc O_Y}: K_0(\op{MF}(Y, v)) \otimes K_0(\op{MF}(Y, w)) \to K_0(\op{MF}(Y, v + w)) $$
is equal to the composition 
$$K_0(\op{MF}(Y, v)) \otimes K_0(\op{MF}(Y, w)) \to K_0(\op{MF}(Y\times Y, v \boxplus w)) \xrightarrow{\Delta^*} K_0(\op{MF}(Y, v+w)),$$
where $$\Delta: (Y, v+w) \to (Y \times Y, v \boxplus w)$$
is the diagonal morphism of LG models. 

Similarly, in $K$-theory, define 
$\cc K^0_{\op{top}}(Y, v) \otimes \cc K^0_{\op{top}}(Y, w) \to \cc K^0_{\op{top}}(Y, v+w)$ to be the composition
$$\cc K^0_{\op{top}}(Y, v) \otimes \cc K^0_{\op{top}}(Y, w) \to \cc K^0_{\op{top}}(Y\times Y, v \boxplus w) \xrightarrow{\Delta^*} \cc K^0_{\op{top}}(Y, v+w).$$
Then by Theorem~\ref{t:ST} and  Theorem~\ref{t:GRR}~\eqref{i:pull}, 
we obtain.
\begin{corollary}
    The following diagram commutes:
\begin{equation}\label{e:int}
\begin{tikzcd}
K_0(\op{MF}(Y, v)) \otimes K_0(\op{MF}(Y, w))  \ar[d] \ar[rr, "\alpha_{(Y, v)} \otimes \alpha_{(Y, w)}"] && 
\cc K^0(Y, v) \otimes \cc K^0(Y, w) \ar[d]\\
 K_0(\op{MF}(Y, v+w)) \ar[rr, "\alpha_{(Y, v+w)}"] && \cc K^0(Y, v+w).
\end{tikzcd}
\end{equation}
\end{corollary}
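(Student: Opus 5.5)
The plan is to factor both vertical maps as an external product followed by a diagonal pullback, and then invoke Theorem~\ref{t:ST} for the first step and Theorem~\ref{t:GRR}~\eqref{i:pull} for the second. Concretely, the left vertical map $\otimes_{\cc O_Y}$ equals the composite of the exterior tensor product $K_0(\op{MF}(Y, v)) \otimes K_0(\op{MF}(Y, w)) \to K_0(\op{MF}(Y\times Y, v \boxplus w))$ with $\Delta^*$. The right vertical map is, by definition, the composite of the external product on critical $K$-theory with $\Delta^*\colon \cc K^0(Y\times Y, v\boxplus w) \to \cc K^0(Y, v+w)$. With these factorizations, the diagram \eqref{e:int} splits into two stacked squares.

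The first step is to verify the factorization on the matrix factorization side at the level of objects. For $\cc E \in \op{Fact}(Y, v)$ and $\cc F \in \op{Fact}(Y, w)$ we have
\[
\Delta^*(\pi_1^*\cc E \otimes \pi_2^* \cc F) = \Delta^*\pi_1^*\cc E \otimes \Delta^*\pi_2^*\cc F = \cc E \otimes_{\cc O_Y} \cc F .
\]
This holds because pullback commutes with tensor products of bundles and with the explicit block matrices defining $\phi^T$, and because $\pi_i\circ \Delta = \mathrm{id}$. Since both operations descend to $K_0$, the identity of maps on Grothendieck groups follows.

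The upper square, which compares $\alpha_{(Y,v)}\otimes\alpha_{(Y,w)}$ with $\alpha_{(Y\times Y, v\boxplus w)}$ through the two external products, is exactly diagram \eqref{e:st} of Theorem~\ref{t:ST} applied to $(Y_1,w_1)=(Y,v)$ and $(Y_2,w_2)=(Y,w)$. The lower square, which compares $\alpha_{(Y\times Y, v\boxplus w)}$ with $\alpha_{(Y, v+w)}$ through $\Delta^*$, is Theorem~\ref{t:GRR}~\eqref{i:pull} for the morphism of LG models $\Delta\colon (Y, v+w)\to (Y\times Y, v\boxplus w)$. This is a morphism of LG models because $(v\boxplus w)\circ\Delta = v+w$. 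Pasting the two squares gives \eqref{e:int}.

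The main obstacle is checking that the hypotheses of the cited results hold in this setting. Theorem~\ref{t:GRR}~\eqref{i:pull} requires $v+w\neq 0$, and the standing assumption requires the critical loci to lie in the zero fibers, i.e.\ $\op{Crit}(v+w)\subset (v+w)^{-1}(0)$, together with the analogous conditions for $v$, $w$, and $v\boxplus w$. I would add these as running hypotheses, which are implicit in the statement since all the groups involved must be defined. I would also check that the pullback $\Delta^*$ on critical $K$-theory is well defined. This holds because $\Delta^{-1}$ of a neighborhood of $(v\boxplus w)^{-1}(0)$ is a neighborhood of $(v+w)^{-1}(0)$, and it carries the negative-real-part locus into the negative-real-part locus, so the pullbacks are compatible with the direct limits.
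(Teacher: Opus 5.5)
Your proposal is correct and follows the paper's own argument. Both vertical maps factor as the exterior product to $(Y\times Y, v\boxplus w)$ followed by pullback along the diagonal $\Delta\colon (Y, v+w)\to (Y\times Y, v\boxplus w)$, and the square then comes from stacking Theorem~\ref{t:ST} on top of Theorem~\ref{t:GRR}~\eqref{i:pull} for $\Delta$. Your extra checks ($v+w\neq 0$ and $\Delta^*(\pi_1^*\cc E\otimes\pi_2^*\cc F)=\cc E\otimes_{\cc O_Y}\cc F$) spell out what the paper leaves implicit; the condition $\op{Crit}(v+w)\subset (v+w)^{-1}(0)$ you add is never actually used, since only the external-product step passes to neighborhoods of critical loci, and there the relevant locus is $\op{Crit}(v)\times\op{Crit}(w)$.
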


\bibliographystyle{alpha}
\bibliography{references2}

\end{document}